\newtheorem{theorem}{Theorem}[section]
\newtheorem{lemma}[theorem]{Lemma}
\newtheorem{proposition}[theorem]{Proposition}
\theoremstyle{definition}
\newtheorem{definition}[theorem]{Definition}
\newtheorem{remark}[theorem]{Remark}
\theoremstyle{remark}
\newcommand{\R}{\mathbb{R}}
\newcommand{\N}{\mathbb{N}}
\newcommand{\Z}{\mathbb{Z}}
\newcommand{\C}{\mathbb{C}}
\newcommand{\e}{\varepsilon}
\newcommand{\f}{\varphi}
\DeclareRobustCommand{\rchi}{{\mathpalette\irchi\relax}}
\newcommand{\irchi}[2]{\raisebox{\depth}{$#1\chi$}}
\newcommand{\zak}{%
  \mathbin{\vrule height 1.6ex depth 0pt width
0.13ex\vrule height 0.13ex depth 0pt width 1.3ex}
}    
\newcommand{\va}[1]{\left| #1 \right|}  
\newcommand{\nor}[1]{\left\| #1 \right\|} 
\newcommand{\sk}[1]{{#1}^{\rm{skew}}}  
\newcommand{\sym}[1]{#1^{\rm{sym}}}
\DeclareMathOperator{\dist}{dist}
\DeclareMathOperator{\Per}{Per}
\DeclareMathOperator{\Span}{Span}
\DeclareMathOperator{\Divv}{Div}
\DeclareMathOperator{\curl}{curl}
\DeclareMathOperator{\Curl}{Curl}
\DeclareMathOperator{\supp}{supp}
\DeclareMathOperator{\spt}{supp}
\newcommand{\distr}{\mathcal{D}'}
\newcommand{\matricidue}{\mathbb{M}^{2 \times 2}}
\newcommand{\lduesimm}{ \mathnormal{L}^2 (\Om;\mathbb{M}^{2 \times 2}_{\text{sym}}) }
\newcommand{\ldueanti}{ \mathnormal{L}^2 (\Om;\mathbb{M}^{2 \times 2}_{\text{skew}}) }
\newcommand{\misure}{ \mathcal{M} (\Om;\R^2) }
\newcommand{\duale}{ \mathnormal{H}^{-1} (\Om;\R^2) }
\newcommand{\strain}{\mathnormal{L}^2 (\Omega;\matricidue)}
\newcommand{\weak}{\rightharpoonup}
\newcommand{\weakstar}{\stackrel{*}{\rightharpoonup}}
\newcommand{\AS}{\mathcal{AS}}
\newcommand{\AD}{\mathcal{AD}}
\newcommand{\Om}{\Omega}
\newcommand{\cir}{\int_{\partial B_\e(x_i)}}
\newcommand{\de}{\partial}
 \title[Linearised polycrystals]{Derivation of Linearised Polycrystals from a 2D system of edge dislocations}
 \author[S. Fanzon]
 {Silvio Fanzon}
 \address[Silvio Fanzon]{University of Sussex, Department of Mathematics, Pevensey 2 Building, Falmer Campus,
Brighton BN1 9QH, United Kingdom}
 \email{S.Fanzon@sussex.ac.uk}
\author[M. Palombaro]
 {Mariapia Palombaro}
 \address[Mariapia Palombaro]{University of Sussex, Department of Mathematics, Pevensey 2 Building, Falmer Campus,
Brighton BN1 9QH, United Kingdom}
 \email{M.Palombaro@sussex.ac.uk}
\author[M. Ponsiglione]
 {Marcello Ponsiglione}
 \address[Marcello Ponsiglione]{Dipartimento di Matematica, Sapienza Universit\`a di Roma, 00185 Rome, Italy}
 \email{ponsigli@mat.uniroma1.it}
\begin{document}

\begin{abstract}
\small{
In this paper we show the emergence of  polycrystalline structures as a result of elastic energy minimisation. For  this purpose, we introduce a variational model for two-dimensional systems of edge dislocations, within the so-called core radius approach, and we derive the $\Gamma$-limit of the elastic energy functional as the lattice space tends to zero. 

In the energy regime under investigation, the symmetric and skew part of the strain become decoupled in the limit, the dislocation measure being the curl of the skew part of the strain.  The limit energy is given by the sum of a plastic term, acting on the dislocation density, and an elastic term, which depends on the symmetric strains. Minimisers under suitable boundary conditions are piece-wise constant antisymmetric strain fields, representing in our model a polycrystal whose grains are mutually rotated by infinitesimal angles.  

\vskip .3truecm \noindent Keywords: Geometric rigidity, Linearization, Polycrystals, Dislocations, Variational methods.
\vskip.1truecm \noindent 2000 Mathematics Subject Classification:  74B15,  74N05, 74N15, 49J45.
 

}
\end{abstract}
 
\maketitle
\tableofcontents

\section{Introduction}


Many solids in nature exhibit a polycrystalline structure. 
A \textit{single phase polycrystal} is formed by many individual crystal grains,
having the same underlying periodic atomic structure, but 
 rotated with respect to each other. 
The region that separates two grains with different orientation is called \textit{grain boundary}. 
Since the grains are mutually rotated, the periodic crystalline structure is disrupted at grain boundaries. 
As a consequence, grain boundaries are regions where {\it dislocations} occur, inducing  high energy concentration.

Polycrystalline structures, which a priori may seem energetically not convenient, arise from the crystallisation of a melt. As the temperature decreases, crystallisation starts from a number of points within the melt. These single grains grow until they meet. Since their orientation is generally different, the grains are not able to arrange in a single crystal and grain boundaries appear as local minimizers of the energy, in fact  as metastable configurations. 
After crystallisation there is a grain growth phase, when the solid tries to minimise the energy by reducing the boundary area. This process happens by atomic diffusion within the material, and it is thermally activated (see \cite[Ch 5.7]{gottstein},  \cite{Taylor}). 
On a mesoscopic scale a polycrystal resembles the structure in Figure \ref{image:poly}. 
\begin{figure}[t]
\begin{center}
\includegraphics[scale=0.35]{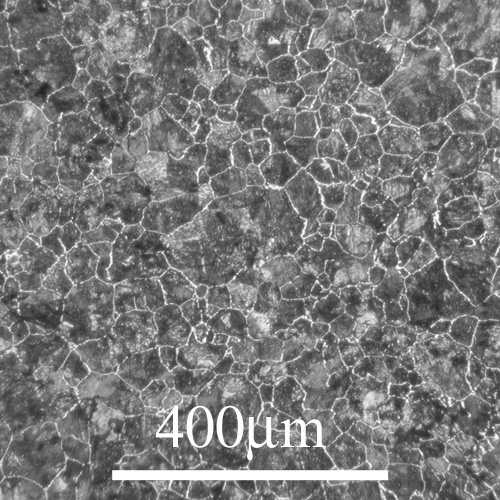}
\caption{Section of an iron-carbon alloy. The darker regions are single crystal grains separated by grain boundaries represented by lighter lines (source \cite{imagepoli}, licensed under CC BY-NC-SA 2.0 UK).} 
\label{image:poly}   
\end{center}
\end{figure}

Our purpose is to describe, and to some extent to predict, polycrystalline structures by variational principles. To this end, we first derive by $\Gamma$-convergence, as the lattice spacing tends to zero,  a total energy functional depending  on the strain and on the dislocation density. Then, we focus on the ground states of such energy,
neglecting the fundamental mechanisms driving the formation and evolution of grain boundaries. 
The main feature of the model proposed in this paper is that grain boundaries and the corresponding grain orientations are not introduced as internal variables of the energy; in fact, they  spontaneously arise as a result of the only energy minimisation under suitable boundary conditions.

Let us introduce our model by first discussing  the case of two dimensional small angle tilt grain boundaries (from now on abbreviated to SATGB). 
The atomic structure of SATGBs is well understood (see, for example,  \cite[Ch 3.4]{gottstein}, \cite{shockley}). In fact, the lattice mismatch between two grains mutually tilted by a small angle $\theta$ is accommodated by a single array of edge dislocations at the grain boundary, evenly spaced at distance $\delta \approx \e /\theta$, where $\e$ represents the atomic lattice spacing. 
Therefore, the number of dislocations at a SATGB is of the order 
$\theta /\e $
(see Figure \ref{fig:tilt}). The elastic energy of a SATGB is given by the celebrated Read-Shockley formula introduced in \cite{shockley} 
\begin{equation} \label{intro3 rs}
\rm{Elastic \,\, Energy} =  E_0 \theta  (A + |\log \theta|) \,, 
\end{equation}
where $E_0$ and $A$ are positive constants depending only on the material.
Recently Lauteri and Luckhaus in \cite{lauteri},
starting from a nonlinear elastic energy,
 proved compactness properties and energy bounds in agreement  with the Read-Shockley formula. 
\begin{figure}[t!] 
\centering   
\def\svgwidth{14.5cm}   
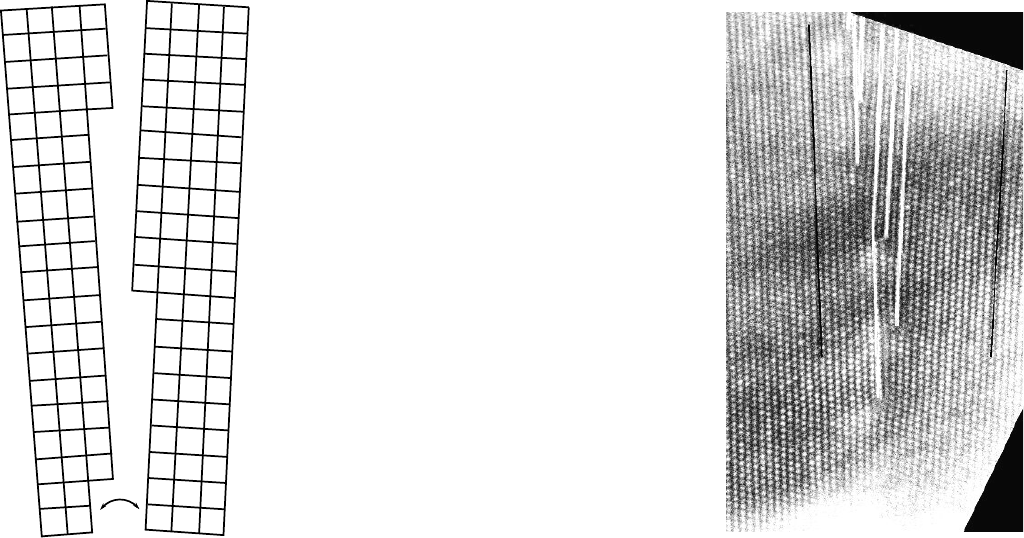  
\caption{Left: schematic picture of two grains mutually rotated by an angle $\theta$. Centre: schematic picture of a SATGB. The two grains are joined together and the lattice misfit is accommodated by an array of edge dislocations spaced  $\delta$ apart and represented by red dots (pictures after \cite{read}). Right: HRTEM of a SATGB in silicon. The green lines represent rows of atoms ending within the crystal. Their end points inside the crystal are edge dislocations, which correspond to the red atoms in the central picture. The blue lines show the mutual rotation between the grains (image from \cite[Section 7.2.2]{imagegrain} with permission of the author H. Foell).}   
\label{fig:tilt}
\end{figure}


In thiinputs paper we focus on lower energy regimes, deriving by $\Gamma$-convergence, as the lattice spacing $\e \to 0$ and the number of dislocations $N_\e \to \infty$, a certain limit energy functional $\mathcal{F}$ that can be regarded as a linearised version of 
the Read-Shockley formula.
We work in the setting of linearised planar elasticity as introduced in \cite{glp} and in particular we require good separation of the dislocation cores. Such good separation hypothesis will in turn imply that the number of dislocations at grain boundaries is of the order 
\begin{equation} \label{intro3 less disl}
N_\e \ll \frac{\theta}{\e} \,.
\end{equation}
As a consequence, we cannot allow a number of dislocations sufficient to accommodate small rotations $\theta$ between grains, but rather we can have rotations by an infinitesimal angle $\theta \approx 0$, that is, antisymmetric matrices.
In this respect our analysis represents the linearised counterpart of the Read-Shockley formula: grains are micro-rotated by infinitesimal angles and the corresponding ground states can be seen as linearised polycrystals, whose energy is linear with respect  to the number of dislocations at grain boundaries.

We now briefly introduce the setting of our problem following \cite{glp}. In \textit{linearised planar elasticity}, the reference configuration is a bounded domain $\Omega~\subset~\R^2$, representing a horizontal section of an infinite cylindrical crystal $\Om \times \R$. A displacement is a regular map $u \colon \Om \to \R^2$ and the
stored energy density $W \colon \matricidue \to [0,+\infty)$ is defined by
\[
W(F):= \frac{1}{2} \C F : F \,,
\]
where $\C$ is a fourth order stress tensor that satisfies 
\[
c^{-1} |\sym{F}|^2 \leq \C F : F \leq c |\sym{F}|^2  \quad \text{for every} \quad F \in \matricidue \,.
\]
Here $\sym{F}:=(F+F^T)/2$ and $c$ is some positive constant. The energy density $W$ acts on gradient strain fields $\beta := \nabla u$ and the elastic energy induced by $\beta$ is defined as 
\[
\int_{\Om} W(\beta) \, dx \,. 
\]

Following the \textit{semi-discrete dislocation model} (see \cite{cermelli,deshpande,glp}), dislocations are introduced
 as point defects of the strain $\beta$. 
 More specifically, 
a straight dislocation line $\gamma$ orthogonal to the cross section $\Om$ is identified with the point $x_0 = \gamma \cap \Om$. We then require
\begin{equation} \label{curl non zero}
\Curl \beta = \xi \, \delta_{x_0} \,,	
\end{equation}
in the sense of distributions. Here $\xi:=(\xi_1,\xi_2,0)$ is the Burgers 
vector, orthogonal to $\gamma$, so that $(\gamma,\xi)$ defines an edge dislocation. Therefore, with the identification above, also $(x_0,\xi)$ represents an edge dislocation (see Figure \ref{fig inf cil}). 
\begin{figure}[t!] 
\centering   
\def\svgwidth{13.5cm}   
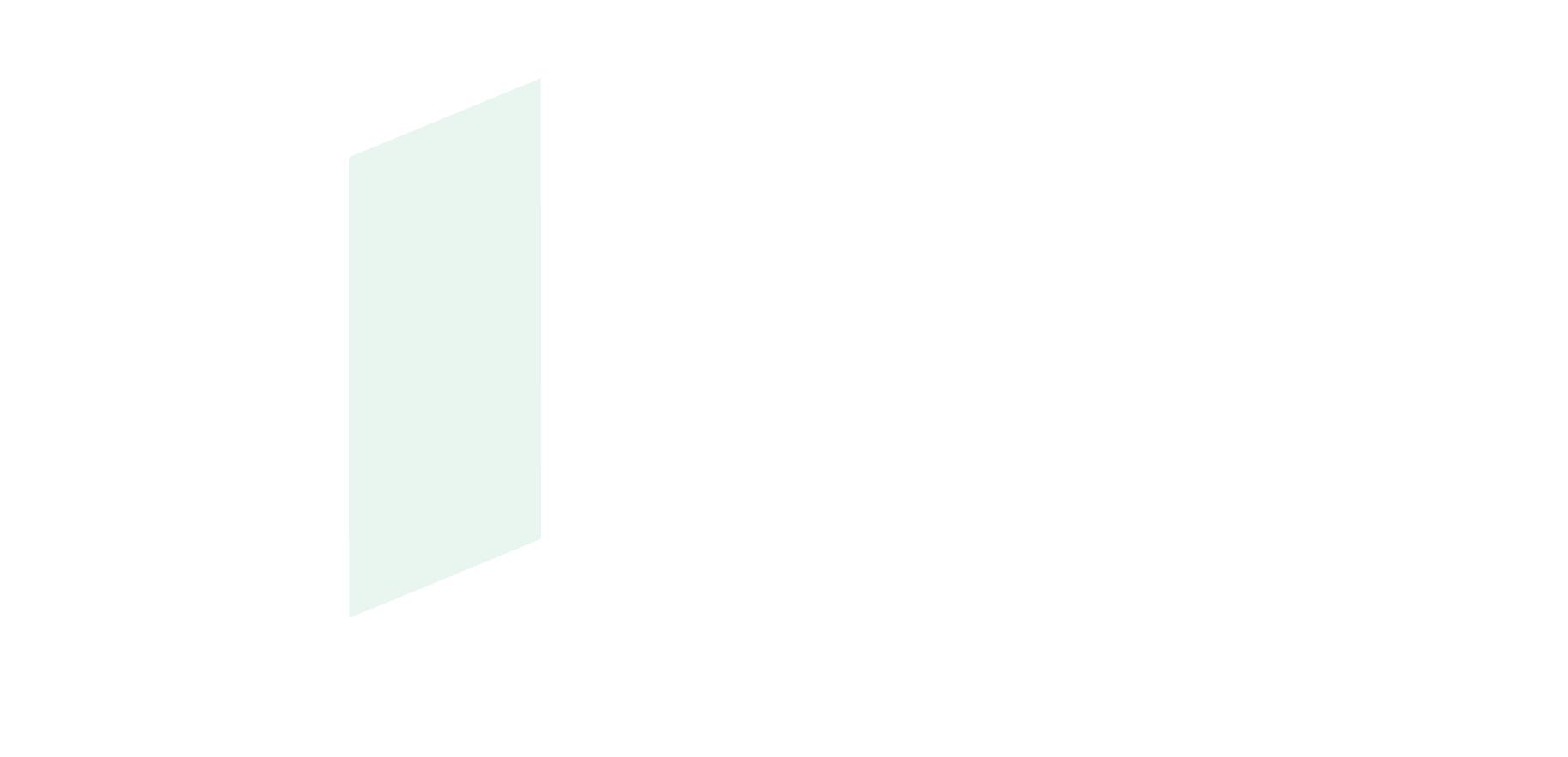  
\caption{Left: cylindrical domain $\Om \times \R$. The dislocation $(\gamma,\xi)$ is of edge type. The green plane represents the extra half-plane of atoms corresponding to $\gamma$. Right: section $\Om$ of the cylindrical domain in the left picture. The red point $x_0 = \gamma \cap \Om$ represents the section of the dislocation line, so that $(x_0,\xi)$ identifies an edge dislocation. The green line is the intersection of the extra half-plane of atoms in the left picture with $\Om$.}   
\label{fig inf cil}
\end{figure}
It is immediate to check that \eqref{curl non zero} implies
\[
\int_{B_\sigma (x_0) \setminus B_\e (x_0)} W(\beta) \, dx \geq c \log \frac{\sigma}{\e}  \,,  \quad \text{for every} \quad \sigma>\e >0 \,.
\] 
From the above inequality we deduce that, as $\e \to 0$, the energy diverges logarithmically in neighbourhoods of $x_0$. To overcome this problem we adopt the so-called core radius approach. Namely, we remove from $\Om$ the ball $B_\e (x_0)$, called the \textit{core region}, where $\e$ is proportional to the underlying lattice spacing, and we replace \eqref{curl non zero} by the circulation condition
\[
\int_{\de B_\e (x_0)} \beta \cdot t \, ds = \xi \,.
\]
In the above formula $t$ is the unit tangent vector to $\de B_\e (x_0)$ and $ds$ in the $1$~-~dimensional Hausdorff measure. A generic distribution of $N$ dislocations will therefore be identified with the points $\{x_i\}_{i=1}^N$. To each $x_i$ we associate a corresponding Burgers vector $\xi_i$, belonging to a finite set $\mathcal{S} \subset \R^2$ of admissible Burgers vectors, which depends on the underlying crystalline structure.  Clearly the Burgers vector scales like $\e$; for example for a square lattice we have $\mathcal{S}= \e \{ \pm e_1, \pm e_2\}$.  From now on we will always renormalise the Burgers vectors, scaling them by $\e^{-1}$, so that 
$\mathcal{S}$ becomes a fixed set independent of the lattice spacing.   
The energy is in turn scaled by $\e^{-2}$, since it is quadratic with respect to the Burgers vector. 
Following \cite{glp}, we make a technical hypothesis of good separation for the dislocation cores, by introducing a small scale $\rho_\e \gg \e$, called \textit{hard core radius}. Any cluster of dislocations contained in a ball $B_{\rho_\e}(x_0) \subset \Om$ will be identified with a multiple dislocation $\xi \, \delta_{x_0}$, where $\xi$ is the sum of the Burgers vectors corresponding to the dislocations in the cluster (see Figure \ref{fig core radius} Left). Therefore $\xi \in \mathbb{S}$, where 
\[
\mathbb{S}:=\Span_\Z \mathcal{S}
\] 
is the set of multiple Burgers vectors. 
\begin{figure}[t!] 
\centering   
\def\svgwidth{14cm}   
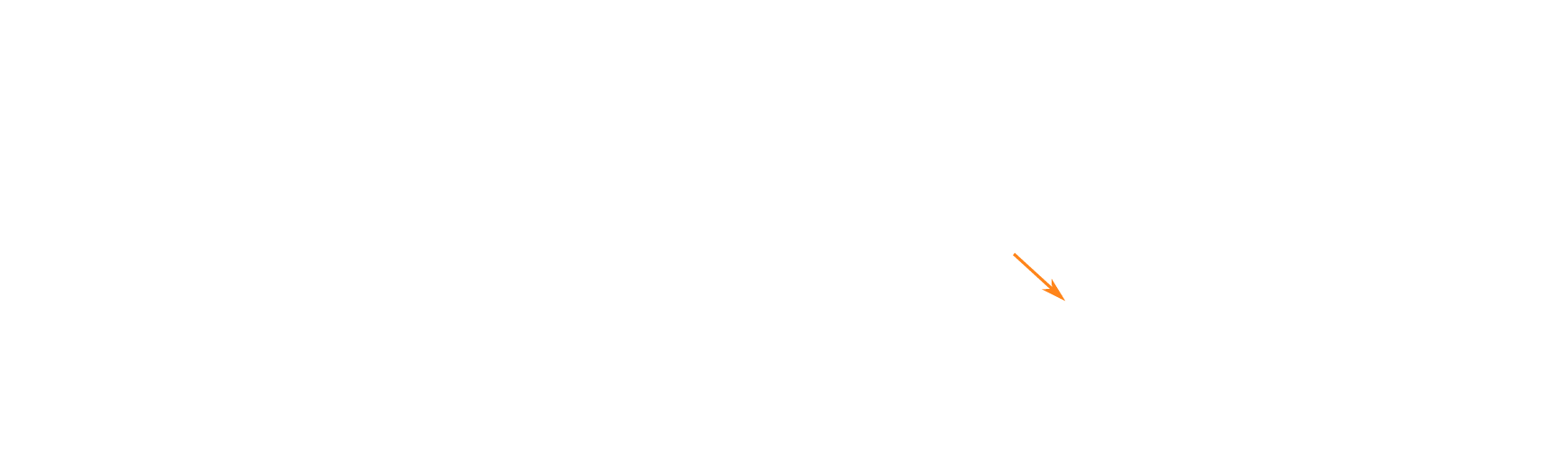  
\caption{Left: clusters of dislocations (blue points) inside the balls $B_{\rho_{\e}}(x_i)$ are identified with a single dislocation $\xi_i \, \delta_{x_i}$ centred at $x_i$  (red spot). The size of the red spot in this schematic  picture exemplifies  the magnitude of the total Burgers vector in the cluster. Right: the drilled domain $\Om_\e (\mu)$. Balls of radius $\e$, centred at the dislocation points $x_i$, are removed from $\Om$. A circulation condition on the strain is assigned on each $\de B_\e (x_i)$.}   
\label{fig core radius}
\end{figure} 
Under this assumption, a generic distribution of dislocations is identified 
with a measure
\[
\mu = \sum_{i=1}^N \xi_i \, \delta_{x_i} \,, \quad \xi_i \in \mathbb{S} \,,
\]
where
\[
|x_i-x_j| \geq 2 \rho_\e \,,  \quad  \dist(x_k,\de \Om) > \rho_\e \,, \quad \text{ for every } 1\leq i,j,k \leq N \,, \,\, i \neq j \,.
\] 
Denote by $\Om_\e (\mu) := \Om \setminus \bigcup_{i} B_\e (x_i)$ the drilled domain (see Figure \ref{fig core radius} Right).
The admissible strains associated to $\mu$ are matrix fields $\beta \in L^2 (\Om_\e (\mu); \matricidue)$ such that 
\[
\Curl \beta \zak \Om_\e (\mu)= 0
\] 
and
\begin{equation} \label{intro3 dis}
\int_{\de B_\e (x_i)} \beta \cdot t \, ds = \xi_i \,,  \quad \text{ for every } \quad i=1,\dots,N \,. 
\end{equation} 
The elastic energy corresponding to $(\mu,\beta)$ is defined by
\begin{equation} \label{intro3 en tot}
E_\e (\mu,\beta):= \int_{\Om_\e (\mu)} W(\beta) \, dx \,.
\end{equation}
The energy induced by the dislocation distribution $\mu$ is given by minimising \eqref{intro3 en tot} over the set of all strains satisfying \eqref{intro3 dis}. From  \eqref{intro3 dis} it follows that the energy is always positive if $\mu \neq 0$.
\par
The energy contribution of a single dislocation core is of order $|\log \e|$ (see Proposition \ref{paper3 prop log}). 
Therefore, for a system of $N_\e$ dislocations, with $N_\e \to \infty$ as $\e \to 0$, the relevant energy regime is
\[
E_\e  \approx N_\e |\log \e| \,.
\] 
This scaling was already studied in \cite{DGP} for  $N_\e\le C$.
The critical regime  $N_\e \approx |\log \e|$ has been considered for Ginzburg Landau vortices in \cite{jerrard} 
 and  for edge dislocations  in   \cite{glp}, where the authors, assuming that the dislocations are well separated,    characterise the $\Gamma$-limit of $\frac{E_\e}{|\log \e|^2}$. We will later discuss how this compares to our $\Gamma$-convergence result. 

For our analysis we will consider a higher energy regime corresponding to
\[
\frac{1}{\e} \gg N_\e \gg |\log \e| 
\]
(see Section \ref{paper3 setting} for the precise assumptions on $N_\e$). We will see that this energy regime will account for grain boundaries that are mutually rotated by  \textit{infinitesimal} angles $\theta \approx 0$. To be more specific, one can split the contribution of $E_\e$ into
\[
E_\e (\mu,\beta) =      E_\e^{\rm inter} (\mu,\beta) +  E_\e^{\rm self} (\mu,\beta) \,,
\]
where $E_\e^{\rm self}$ is the self-energy concentrated in the hard core region 
$\cup_i B_{\rho_\e}(x_i)$ while $E_\e^{\rm inter}$ is the interaction energy computed outside the hard core region. 
In Theorem \ref{thm:gamma} we will prove that the $\Gamma$-limit as $\e \to 0$ of 
the rescaled functionals $E_\e$, with respect to the strains and the dislocation measures, is of the form
\begin{equation} \label{paper3 intro gamma}
\mathcal{F} (\mu,S,A) = \int_\Om W(S) \, dx  +  \int_\Om  \f \left( \frac{d \mu}{d|\mu|} \right) \, d|\mu| \,.
\end{equation}
The first term of $\mathcal{F}$ comes from the interaction energy. It represents the elastic energy of the symmetric field $S$, which is the weak limit of the symmetric part of the strains rescaled by
$\sqrt{N_\e |\log \e|}$. Instead, the antisymmetric part of the strain, rescaled by $N_\e$, weakly converges to an antisymmetric field $A$. Therefore, since $N_\e \gg |\log \e|$, the symmetric part of the strain is of lower order with respect to the antisymmetric part.

The second term of $\mathcal{F}$ is the plastic energy. The density function $\f$ is positively $1$-homogeneous and it can be defined as the relaxation of a cell-problem formula (see Proposition \ref{paper3 prop log}). 
The measure $\mu$ in \eqref{paper3 gamma} is the weak-$*$ limit of the dislocation measures rescaled by $N_\e$, and $d\mu/d|\mu|$ represents the Radon-Nikodym derivative of $\mu$ with respect to $|\mu|$.
Notice that $A$ and $\mu$ come from the same rescaling $N_\e$, whereas the symmetric part $S$ is of lower order, namely $\sqrt{N_\e |\log \e|}$.
As a consequence, the compatibility condition \eqref{intro3 dis} passes to the limit as 
\[
\Curl A = \mu \,.
\]
This implies that the \textit{elastic} and \textit{plastic} terms in $\mathcal{F}$ are \textit{decoupled}. Indeed this is the main difference with the critical regime $N_\e \approx |\log \e|$ studied in \cite{glp}, where the contribution of the symmetric and antisymmetric part of the strain, as well as the dislocation measure, have the same order $|\log \e|$. This results, in \cite{glp}, into the coupling  of the two terms of the energy, through the condition $\Curl \beta = \mu$ where $\beta=S+A$.

Next we focus on the study of the $\Gamma$-limit $\mathcal{F}$. Precisely, we impose \textit{piecewise constant} Dirichlet boundary conditions on $A$, and we show that $\mathcal{F}$ is minimised by strains that are \textit{locally constant} and take values in the set of \textit{antisymmetric matrices}.  More precisely, 
there is a Caccioppoli partition of $\Om$ with sets of finite perimeter where the antisymmetric strain is constant. 
Such  sets represent the grains of the polycrystal, while the corresponding constant antisymmetric matrices represents their orientation. 
We call such configurations \textit{linearised polycrystals}. 
This definition is motivated by the fact that  antisymmetric matrices can be considered as infinitesimal rotations, being the linearisation about the identity of the set of rotations. 
   The proof of this result is based on the simple observation that the variational problem is equivalent to minimise some anisotropic total variation of a scalar function, which is locally constant on $\partial \Om$. By the coarea formula, one can easily show that there always exists a piece-wise constant minimiser.

The paper is organised as follows. In Section \ref{paper3 setting} we introduce the rigorous mathematical setting of the problem. 
In Section \ref{paper3 review glp} we recall some results from \cite{glp}, which will be useful for the $\Gamma$-convergence analysis of the rescaled energy $E_\e$. The main $\Gamma$-convergence result will be proved in Section \ref{paper3 gamma}. 
In Section \ref{paper3 boundary} we will include Dirichlet type boundary conditions to the $\Gamma$-convergence analysis performed in the previous section. Finally, in  Section \ref{paper3 poly} we will show that the plastic part of $\mathcal{F}$ is minimised by linearised polycrystals, by prescribing piecewise constant boundary conditions on the antisymmetric part of the limit strain.  

\section{Setting of the problem}\label{paper3 setting}

Let $\Omega \subset \R^2$ be a bounded open domain with Lipschitz continuous boundary. The set $\Om$ represents a horizontal section of an infinite cylindrical crystal $\Om \times \R$. Define as $\mathcal{S}:=\{b_1, \dots , b_s  \}$ the class of Burgers vectors. We will assume that $\mathcal{S}$ contains at least two linearly independent vectors so that
$\Span_\R \mathcal{S} = \R^2 $. 
We then define the set of slip directions
\[
\mathbb{S}:= \Span_\Z \mathcal{S} \,,
\] 
that coincides with the set of Burgers vectors for multiple dislocations. An edge dislocation can be identified with a point $x \in \Om$ and a vector $\xi \in \mathbb{S}$.

Let $\e >0$ be a parameter representing the interatomic distance of the crystal and denote by $\{N_\e\} \subset \N$   the number of dislocations present in the crystal at the scale $\e$. As in \cite{glp}, we introduce a hard core radius $\rho_\e \to 0$, and we assume that
\begin{enumerate}[(i)]
\item $\lim_{\e \to 0} \rho_\e / \e^s = + \infty$ for every fixed $0<s<1$\,,
\item $\lim_{\e \to 0} N_\e \rho_\e^2  = 0$\,, 	
\item $\lim_{\e \to 0} \frac{N_\e}{|\log \e|}   = +\infty$. 	
\end{enumerate}
The first condition implies that the hard core region contains almost all the self energy (see Proposition \ref{prop glp}), the second one guarantees that the area of the hard core region tends to zero, while the third one corresponds to the supercritical regime, where the interaction energy is dominant with respect to the self energy. The above conditions are compatible if
 \begin{equation}\label{compatibility}
\rho_\e = \e^{t(\e)}, \qquad N_\e = \e^{-t(\e)}  
 \end{equation}
 for some positive  $t(\e)$ converging to zero slowly enough (for instance such that $t(\e)|\log \e|<<\log(|\log \e|)$).
 The class of admissible dislocations is defined by
 \begin{equation} \label{AD}
 \begin{aligned}
\AD_\e (\Om) := \Big\{  \mu \in & \misure \, \colon \, \mu   = \sum_{i=1}^M \xi_i  \delta_{x_i} \, , \, M \in \N , \, \xi_i \in \mathbb{S} \,,  \\
  & B_{\rho_\e} (x_i) \subset \Om \,, \,\, |x_j - x_k| \geq 2 \rho_\e \,, \,\, \text{for every } i \text{ and } \, j \neq k \Big\} \,. 	
 \end{aligned}
\end{equation}
Here $\misure$ denotes the space of $\R^2$ valued Radon measures on $\Omega$ and $B_{r} (x)$ is the ball of radius $r$ centred at $x \in \R^2$. 

Fix a dislocations measure $\mu = \sum_{i=1}^M \xi_i  \delta_{x_i} \in \AD_\e (\Omega)$. For $r>0$ define
\begin{equation} \label{dominio perf}
\Omega_r (\mu) := \Om \setminus \cup_{i=1}^M \overline{B_r (x_i)} \,.	
\end{equation}
The class of admissible strains associated with $\mu$ is given by the maps $\beta \in L^2 (\Om_\e (\mu); \R^2)$ such that 
\[
\Curl \beta \zak \Omega_\e (\mu)= 0 \,, \quad  \quad \cir \beta \cdot t \, ds = \xi_i  \quad \text{for every} \quad i=1,\dots,M \,.
\]
The identity $\Curl \beta = 0$ is intended in the sense of distributions, where 
$\Curl \beta \in \distr(\Om;\R^2)$ is defined as
\begin{equation} \label{paper3 Curl def}
\Curl \beta := (\de_1 \beta_{12} - \de_2 \beta_{11},\de_1 \beta_{22} - \de_2 \beta_{21} ) \,.
\end{equation}
The integrand $\beta \cdot t$ is intended in the sense of traces, since $\beta \in H(\Curl,\Om_\e (\mu))$ (see \cite[Theorem 2, p. 204]{dautray}), 
and $t$ is the unit tangent vector to $\de B_\e (x_i)$, obtained by a counter-clockwise rotation of $\pi/2$ of the outer normal $\nu$ to $B_\e (x)$, that is $t :=J \nu$ with
\begin{equation} \label{paper3 J}
J:= \left( 
\begin{matrix}
0  &   -1 \\
1  &   0 	
\end{matrix}
\right) \,.
\end{equation}
In the following it will be useful to extend the admissible strains to the whole $\Om$. Therefore, for a dislocation measure $\mu= \sum_{i=1}^M \xi_i  \delta_{x_i} \in \AD_\e (\Omega)$, we introduce the class $\AS_\e (\mu)$ of admissible strains as
\begin{equation} \label{AS}
\begin{aligned}
\AS_\e (\mu) := & \Big\{ \beta \in \strain \, \colon \, \beta \equiv 0 \,\, \text{in} \,\, \Omega \setminus \Omega_\e (\mu) \,, \,\, \Curl \beta =0 \,\, \text{in} \,\, \Omega_\e (\mu)\,, \\ 
&\cir \beta \cdot t \, ds = \xi_i \,,\,\, \int_{\Om_\e (\mu)} \sk{\beta} \, dx = 0 \,, \,\, \text{for every} \,\,	i = 1,\dots,M \Big\} \,.	
\end{aligned}
\end{equation}
Here $\sk{F}:=  (F-F^T)/2$. The last condition in \eqref{AS} is not restrictive and will guarantee the uniqueness of the minimising strain.

The energy associated to an admissible pair $(\mu,\beta)$ with $\mu \in \AD_\e (\Omega)$ and $\beta~\in~\AS_\e (\mu)$ is defined by
\begin{equation}\label{Ee}
E_\e (\mu,\beta) := \int_{\Omega_\e (\mu)} W(\beta) \, dx=\int_{\Omega} W(\beta) \, dx \,,
\end{equation}
where
\[
W(F) :=\frac{1}{2} \C F \colon F =\frac{1}{2} \C \sym{F} \colon \sym{F}
\]
is the strain energy density, where   $\C$ is the elasticity tensor satisfying   
\begin{equation} \label{coercivity}
c^{-1} |\sym{F}|^2 \leq W(F) \leq c |\sym{F}|^2  \qquad \text{for every} \quad F \in \matricidue \,, 
\end{equation}
for some given  $c>0$. Since the elasticity tensor also satisfies the symmetry properties $\C_{ijkl} = \C_{klij} = \C_{ijlk} = \C_{jikl}$ (see \cite{bacon}), it follows that 
$$
\frac{1}{2} \C F \colon F =\frac{1}{2} \C \sym{F} \colon \sym{F} \,.
$$
Notice that for any $\mu \in \AD_\e (\Omega)$ the minimum problem
\begin{equation} \label{paper3 intro min}
\min \left\{ \int_{\Om_\e (\mu)} W (\beta) \, dx \, \colon \, \beta \in \AS_\e (\mu) \right\}
\end{equation}
has a unique solution. This can be seen by removing a finite number of cuts $L$ from $\Om_\e (\mu)$ so that $\Om_\e (\mu) \setminus L$ becomes simply connected and observing that there exists a displacement gradient such that $\nabla u = \beta$ in $\Om_\e (\mu) \setminus L$. Then we can apply the classic Korn inequality (see, e.g., \cite{ciarlet}) to $\nabla u$, and conclude by using the direct method of calculus of variations.

We recall that in our analysis we assume  the supercritical regime 
\begin{equation} \label{super}
N_\e \gg |\log \e| \,.
\end{equation}
As already discussed, the relevant scaling for the asymptotic study of $E_\e$ is given by $N_\e |\log \e|$. Therefore we introduce the scaled energy functional defined on the space $\misure \times \strain$ as
\begin{equation} \label{energy}
\mathcal{F}_\e (\mu, \beta) :=
\begin{cases}
 \displaystyle\frac{1}{N_\e |\log \e |} \, E_\e (\mu, \beta)   & \qquad \text{if} \,\, \mu \in \AD_\e (\Omega) \,,\,\, \beta \in \AS_\e (\mu) \,, \\
 + \infty                                         & \qquad \text{otherwise.}
 \end{cases}
\end{equation}

\section{Preliminaries} \label{paper3 review glp}

In this section we will recall some results and notations from \cite{glp} that will be needed in the following $\Gamma$-convergence analysis.

\subsection{Cell formula for the self-energy} \label{paper3 cell}

In this section we will rigorously define the density function $\f$ appearing in the $\Gamma$-limit $\mathcal{F}$ introduced in \eqref{paper3 intro gamma}. In order to do so, following \cite[Section 4]{glp}, we will introduce the self-energy $\psi(\xi)$ stored in the core region of a single dislocation $\xi \, \delta_0$ centred at the origin. 

Let us start by defining, for every $\xi \in \R^2$ and $0<r_1<r_2$, the space
\begin{equation} \label{self test}
\AS_{r_1,r_2} (\xi) := \left\{  \beta \in L^2 (B_{r_2} \setminus B_{r_1}; \matricidue) \, \colon \, \Curl \beta =0 ,\, 
\int_{\de B_{r_1}} \beta \cdot t \, ds = \xi    \right\}\,,	
\end{equation}
where $B_r$ is the ball of radius $r$ centred at the origin.
For strains belonging to such class, we have the following bound from below of the energy (see \cite[Remark 3]{glp}).


\begin{proposition}\label{paper3 prop bound}
Let $0<r_1<\frac{r_2}{2}$ and $\xi \in \R$. There exists a constant $c>0$ such that, for every $\beta \in AS_{r_1,r_2} (\xi)$, 
\begin{equation} \label{paper3 apriori}
\int_{B_{r_2} \setminus B_{r_1}} |\sym{\beta}|^2 \, dx \geq c |\xi|^2 \log \frac{r_2}{r_1} \,.
\end{equation}
\end{proposition}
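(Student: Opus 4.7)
The plan is to extract a scale-invariant core of the inequality and accumulate it across dyadic shells. Under the rescaling $\beta(\cdot)\mapsto \rho\,\beta(\rho\,\cdot)$ both $\int|\sym{\beta}|^2\,dx$ and the circulation $\xi$ are preserved, so it suffices to prove the following universal dyadic bound: there exists $c>0$ such that
\[
\int_{B_2\setminus B_1}|\sym{\beta}|^2\,dx \geq c|\xi|^2 \qquad \text{for every } \beta\in\AS_{1,2}(\xi).
\]
Given this, the hypothesis $r_1<r_2/2$ lets us pack the disjoint dyadic shells $B_{2^{k+1}r_1}\setminus B_{2^k r_1}$, $k=0,\dots,\lfloor\log_2(r_2/r_1)\rfloor-1$, into $B_{r_2}\setminus B_{r_1}$, apply the rescaled dyadic estimate on each, and sum to recover the logarithmic bound.

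I would prove the dyadic inequality by compactness and contradiction. Assume instead that $\xi\neq 0$ and $\beta_n\in\AS_{1,2}(\xi)$ satisfy $\int|\sym{\beta_n}|^2\to 0$. Since $\Curl\beta_n=0$, after removing a radial cut the curl-free field admits a single-valued potential $u_n\in H^1$ with $\nabla u_n=\beta_n$, whose traces on the two sides of the cut differ by $\xi$ (by the circulation condition). The corrector
\[
\tilde u_n(x) := u_n(x) - \frac{\theta(x)}{2\pi}\,\xi
\]
removes this jump, so $\tilde u_n \in H^1(B_2\setminus B_1;\R^2)$ and
\[
\sym{(\nabla\tilde u_n)} = \sym{\beta_n} - \gamma, \qquad \gamma := \frac{1}{2\pi}\sym{(\xi\otimes\nabla\theta)} \in L^2(B_2\setminus B_1;\matricidue).
\]
Hence $\sym{(\nabla\tilde u_n)}\to -\gamma$ in $L^2$. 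By Korn's inequality on the Lipschitz annulus, suitable infinitesimal rigid motions $R_n x+d_n$ make $v_n:=\tilde u_n-(R_n x+d_n)$ bounded in $H^1$; extracting a subsequence, $v_n\rightharpoonup v$ weakly in $H^1$ with $\sym{(\nabla v)}=-\gamma$ almost everywhere.

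To close the argument, define $w:=v+\frac{1}{2\pi}\theta\,\xi$ on the cut annulus. Then $\sym{(\nabla w)}=0$, so $w$ coincides a.e.\ with an infinitesimal rigid motion $Sx+d$ (with $S$ constant and skew). The right-hand side is single-valued on $B_2\setminus B_1$, whereas the two one-sided traces of $w$ on the cut differ by $\xi$; this forces $\xi=0$, contradicting our choice. The main technical obstacle is this compactness step: one must invoke Korn's inequality on the Lipschitz annulus, verify that the corrector $\frac{1}{2\pi}\xi\otimes\nabla\theta$ is square-integrable on $B_2\setminus B_1$, and track the multi-valued potential consistently across the cut. These are standard individually, and combined with the dyadic scaling they deliver the stated lower bound.
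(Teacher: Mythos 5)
The paper does not prove this proposition in-text; it cites Remark~3 of \cite{glp}, so there is no in-paper argument to compare against, and I assess your proof on its own merits. It is correct. The rescaling $\beta\mapsto\rho\,\beta(\rho\,\cdot)$ does preserve both $\int|\sym{\beta}|^2\,dx$ and the circulation; since $\Curl\beta=0$ in the annulus, Stokes' theorem gives circulation $\xi$ on every concentric circle $\partial B_r$ with $r_1\le r<r_2$, so the restriction of $\beta$ to each dyadic shell $B_{2^{k+1}r_1}\setminus B_{2^kr_1}$ lies in $\AS_{2^kr_1,2^{k+1}r_1}(\xi)$, and the number of disjoint such shells contained in $B_{r_2}\setminus B_{r_1}$ is $\lfloor\log_2(r_2/r_1)\rfloor\ge\tfrac12\log_2(r_2/r_1)$ once $r_2>2r_1$, which yields the logarithmic factor with a universal constant. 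The compactness argument for the base case on $B_2\setminus B_1$ is also sound: $\nabla\theta$ is bounded on the annulus so $\gamma\in L^2$; the second Korn inequality holds on the Lipschitz annulus and, together with Poincar\'e, gives $H^1$ boundedness of $v_n$ after subtracting a rigid motion; strong $L^2$ convergence of the symmetric gradients passes to the weak $H^1$ limit $v$, giving $\sym{(\nabla v)}=-\gamma$; and $w=v+\tfrac1{2\pi}\theta\,\xi$ then has vanishing symmetric gradient on the connected cut annulus, so it is a single-valued affine infinitesimal rigid motion, which forces the $\xi$-jump across the cut to be zero. The only price of the soft compactness step is that it produces no explicit constant, but the proposition asks only for the existence of some $c>0$, so this is acceptable.
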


Let $C_\e := B_1 \setminus B_\e$, with $0<\e<1$, and introduce $\psi_\e \colon \R^2 \to \R$ through the cell problem
\begin{equation} 
\psi_\e (\xi):=  \frac{1}{|\log \e|} \min \left\{   \int_{C_\e} W(\beta) \, dx \, \colon \,	\beta \in \AS_{\e,1} (\xi) \right\} \,.\label{psi eps}
\end{equation}
It is easy to show that the minimum in \eqref{psi eps} exists, by combining the classic Korn inequality with the direct method of the calculus of variations.
It is also immediate to check that the minimiser $\beta_\e (\xi)$ of \eqref{psi eps} satisfies the boundary value problem
\[
\begin{cases}
\Divv \C \beta_\e (\xi) = 0 & \text{ in } C_\e, \\
\C \beta_{\e} (\xi) \cdot \nu = 0   &   \text{ on } \de C_\e ,	
\end{cases}
\]
where $\nu$ is the outer normal to $\de C_\e$. Also,  there exists a  strain $\beta_{0}(\xi) \colon \R^2 \to \matricidue$ with $|\beta_{0}(\xi)(x)| \le c |x|^{-1} |\xi|$ (see \cite{bacon})   that is a distributional solution to 

\begin{equation}\label{dislor2}
\begin{cases}
\Divv \C \beta_0 (\xi) = 0 & \text{ in } \R^2, \\
\Curl \beta_0 (\xi) = \xi \, \delta_0    &   \text{ in } \R^2.	
\end{cases}
\end{equation}
The following result holds true (see \cite[Corollary 6]{glp}).

\begin{proposition}[Self-energy] \label{paper3 prop log}
There exists a constant $C>0$ such that for every $\xi \in \R^2$, 
\begin{equation} \label{paper3 stima psi eps}
\psi_\e (\xi ) \leq \frac{1}{|\log \e|}  \int_{C_\e} W(\beta_0(\xi)) \,dx \leq  \psi_\e (\xi) +  \frac{C|\xi|^2}{|\log \e|} \,.
\end{equation}
In particular, for every $\xi \in \R^2$, we have that 
\[
\lim_{\e \to 0} \psi_\e (\xi)= \psi (\xi) \,,
\] 
pointwise, where the map $\psi \colon \R^2 \to \R$ is the self-energy defined by 
\begin{equation} \label{psi}
\psi(\xi) := \lim_{\e \to 0} \frac{1}{|\log \e|} \int_{C_\e} W(\beta_0(\xi)) \,dx \,.
\end{equation}
Moreover, there exists a constant $c>0$ such that, for every $\xi \in \R^2$,
\begin{equation} \label{paper3 psi bound bis}
c^{-1} |\xi|^2 \leq \psi (\xi) \leq c |\xi|^2 \,.
\end{equation}
\end{proposition}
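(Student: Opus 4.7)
The plan is to first show that the middle quantity in \eqref{paper3 stima psi eps} is actually independent of $\e$, so the double inequality forces $\psi_\e(\xi)$ to converge to it and the bounds on $\psi$ drop out for free. The key ingredient is that the canonical edge-dislocation strain $\beta_0(\xi)$ solving \eqref{dislor2} is positively $(-1)$-homogeneous, $\beta_0(\xi)(\lambda x) = \lambda^{-1}\beta_0(\xi)(x)$ for $\lambda>0$. This follows from the scale invariance of \eqref{dislor2}: the rescaled field $\beta_0^\lambda(x) := \lambda^{-1}\beta_0(x/\lambda)$ still solves \eqref{dislor2} (both $\xi\delta_0$ and $\Divv\C\beta$ applied to a $(-1)$-homogeneous $\beta$ are $(-2)$-homogeneous distributions in $\R^2$), and uniqueness among solutions with the decay $|\beta_0(x)| \le c|\xi|/|x|$ then forces $\beta_0^\lambda = \beta_0$. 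Consequently $W(\beta_0(\xi))$ is $(-2)$-homogeneous, and polar coordinates give
\[
\int_{C_\e} W(\beta_0(\xi))\,dx = |\log\e|\,g(\xi), \qquad g(\xi) := \int_{S^1} W(\beta_0(\xi))\,d\mathcal{H}^1,
\]
independent of $\e$. In particular the middle term of \eqref{paper3 stima psi eps} equals $g(\xi)$.

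The first inequality of \eqref{paper3 stima psi eps} then follows by using $\beta_0|_{C_\e}$ as a competitor in the cell problem \eqref{psi eps}: it lies in $\AS_{\e,1}(\xi)$ because $\Curl\beta_0 = \xi\delta_0$ distributionally yields curl-freeness on $C_\e$ and circulation $\xi$ over $\de B_\e$, and the pointwise decay ensures $L^2$-integrability. For the second inequality denote by $\beta_\e^* \in \AS_{\e,1}(\xi)$ the unique minimizer; its Euler--Lagrange equation reads $\int_{C_\e} \C\beta_\e^* : \eta\,dx = 0$ for every admissible variation $\eta$ (curl-free on $C_\e$ with vanishing circulation over $\de B_\e$). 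Apply this with $\eta = \beta_0 - \beta_\e^*$ and expand using the quadratic identity $W(\beta_0) = W(\beta_\e^*) + \C\beta_\e^* : (\beta_0 - \beta_\e^*) + W(\beta_0 - \beta_\e^*)$ to obtain
\[
\int_{C_\e} W(\beta_0)\,dx - \int_{C_\e} W(\beta_\e^*)\,dx = \int_{C_\e} W(\beta_0 - \beta_\e^*)\,dx,
\]
so the second inequality is equivalent to the $\e$-uniform bound $\int_{C_\e} W(\beta_0 - \beta_\e^*)\,dx \le C|\xi|^2$.

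This last bound is the main obstacle. The difference $\beta_0 - \beta_\e^*$ is curl-free with zero circulation over $\de B_\e$, hence a gradient $\nabla w_\e$, and $w_\e$ solves a Neumann elasticity problem on the thin annulus $C_\e$ with data $-\C\beta_0\cdot\nu$ that blows up pointwise like $|\xi|/\e$ on $\de B_\e$; a naive energy estimate therefore fails. My route would be a rescaling $y = x/\e$ that blows up the annulus to $\widetilde C_\e := B_{1/\e}\setminus B_1$: by the $(-1)$-homogeneity of $\beta_0$ the rescaled Neumann data is $O(|\xi|)$ on the fixed inner boundary $\de B_1$ and $O(\e|\xi|)$ on the far outer boundary, and the Dirichlet energy is scale invariant, so $\int_{C_\e}|\nabla w_\e|^2 = \int_{\widetilde C_\e}|\nabla \widetilde w_\e|^2$. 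Elliptic estimates together with uniform Korn on annuli with a fixed inner radius (and the compatibility of the data) then give $\int_{\widetilde C_\e} |\nabla \widetilde w_\e|^2 \le C|\xi|^2$, which combined with coercivity \eqref{coercivity} yields the claim; this is precisely the content of \cite[Corollary~6]{glp}. With the two inequalities in hand, $\psi_\e(\xi) \to g(\xi)$, so we take $\psi(\xi) := g(\xi)$ as in \eqref{psi}. Finally, the upper bound $\psi(\xi) \le c|\xi|^2$ is immediate from the explicit formula for $g$ and $|\beta_0(\xi)(\hat x)| \le c|\xi|$ on $S^1$, while the lower bound $\psi(\xi) \ge c^{-1}|\xi|^2$ follows by applying Proposition \ref{paper3 prop bound} to $\beta_\e^*$ on $C_\e$, dividing by $|\log\e|$, and letting $\e \to 0$.
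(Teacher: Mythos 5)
Your overall strategy is sound and in fact more informative than the paper itself, which offers no proof and simply attributes the entire proposition to \cite[Corollary~6]{glp}. The $(-1)$-homogeneity of $\beta_0$ is correct (the rescaled field $\lambda^{-1}\beta_0(\cdot/\lambda)$ again solves \eqref{dislor2} with the same decay, and a Liouville theorem for $\Divv\C\nabla u=0$ with $\nabla u\to 0$ at infinity gives uniqueness), so $W(\beta_0)$ is $(-2)$-homogeneous and the middle term of \eqref{paper3 stima psi eps} is exactly $g(\xi)$, constant in $\e$; once the two-sided bound is established, the pointwise limit and the value $\psi=g$ drop out. Using $\beta_0\big|_{C_\e}$ as a competitor for the first inequality, the Euler--Lagrange orthogonality $\int_{C_\e}\C\beta_\e^*:(\beta_0-\beta_\e^*)=0$, and the quadratic identity reducing the second inequality to $\int_{C_\e}W(\beta_0-\beta_\e^*)\le C|\xi|^2$, are all correct, and so are the bounds \eqref{paper3 psi bound bis}.

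The gap is precisely at the estimate $\int_{C_\e}W(\beta_0-\beta_\e^*)\le C|\xi|^2$. Your sketch for it does not close: after rescaling to $\widetilde C_\e=B_{1/\e}\setminus B_1$ the data are indeed $O(|\xi|)$ on $\de B_1$ and $O(\e|\xi|)$ on $\de B_{1/\e}$, but there is no ``uniform Korn on annuli with a fixed inner radius'' --- the Korn/Poincar\'e constant on $B_R\setminus B_1$ grows with $R$, so that tool alone cannot produce an $\e$-independent bound on a domain of diameter $1/\e$. What actually makes the estimate true is the extra structure (compatible Neumann data concentrated on the fixed boundary $\de B_1$, vanishing data far away, the homogeneous bulk equation), which forces the solution to \emph{decay} away from $\de B_1$; proving that requires a genuine argument, e.g.\ a dyadic decomposition of the annulus with interior decay estimates, a capacity-type bound, or the explicit comparison construction of \cite{glp}. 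Your closing phrase that this bound ``is precisely the content of \cite[Corollary~6]{glp}'' is circular with respect to the task: that corollary \emph{is} the proposition you are proving, so it cannot be invoked to fill the remaining step.
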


We now want to show that the self-energy $\psi(\xi)$ is indeed concentrated in the hardcore region $B_{\rho_\e} \setminus B_{\e}$ of the dislocation $\xi \, \delta_0$.
To this end, define the map $\bar{\psi}_\e \colon \R^2 \to \R$ as 
\begin{equation} 
\bar{\psi}_\e (\xi) :=\frac{1}{|\log \e|}  \min \left\{   \int_{B_{\rho_\e} \setminus B_\e} W(\beta) \, dx \, \colon \,	\beta \in \AS_{\e,\rho_\e} (\xi) 
\right\} \,,  \label{psi bar} 	
\end{equation}
for $\xi \in \R^2$. It will also be useful to introduce $\tilde{\psi}_\e \colon \R^2 \to \R$ as
\begin{equation}
	\tilde{\psi}_\e (\xi) :=\frac{1}{|\log \e|}  \min \left\{ \int_{B_{\rho_\e} \setminus B_\e} W(\beta) \, dx \, 
	\colon \, \beta \in \AS_{\e, \rho_\e} (\xi) , \, \beta \cdot t = \hat{\beta} \cdot t \, \text{ on } \, \de B_\e \cup \de B_{\rho_\e} \right\} \,, \label{psi tilde}
\end{equation}
where $\hat{\beta} \in \AS_{\e , \rho_\e } (\xi)$ is a fixed given strain such that 
\begin{equation} \label{crescita beta}
|\hat{\beta} (x)| \leq K \, \frac{|\xi|}{|x|} \,, 	
\end{equation}
for some positive constant $K$.  
By \eqref{coercivity}, it is immediate to see that problems \eqref{psi bar}-\eqref{psi tilde} are well posed.  
The following results holds (see \cite[Remark 7, Proposition 8]{glp}).

\begin{proposition} \label{prop glp}
We have $\bar{\psi}_\e (\xi) = \psi_\e(\xi) (1 + o (\e))$ and $\tilde{\psi}_\e(\xi) = \psi_\e (\xi)(1 + o (\e))$, with $o(\e) \to 0$ as $\e \to 0$  uniformly with respect to $\xi \in \R^2$. In particular 
\[
\lim_{\e \to 0} \bar{\psi}_\e (\xi) =\lim_{\e \to 0} \tilde{\psi}_\e (\xi) = \psi (\xi)
\]
pointwise, where $\psi$ is the self-energy defined in \eqref{psi}.
\end{proposition}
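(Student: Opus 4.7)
The plan is to reduce the analysis of $\bar\psi_\e$ and $\tilde\psi_\e$ to the canonical cell problem $\psi_\e$ via a rescaling argument combined with the cutoff estimate \eqref{paper3 stima psi eps}. Since $W$ is quadratic and all circulation and trace conditions are linear in $\xi$, the maps $\psi_\e,\bar\psi_\e,\tilde\psi_\e$ are positively $2$-homogeneous, so it suffices to prove the uniform estimates on the unit sphere $\{|\xi|=1\}$. Set $\tilde\e:=\e/\rho_\e$, which by hypothesis~(i) satisfies $\tilde\e\to 0$ and $|\log\tilde\e|/|\log\e|\to 1$.

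The first key observation is a scaling identity. The change of variables $y=x/\rho_\e$ with $\tilde\beta(y):=\rho_\e\,\beta(\rho_\e y)$ induces a bijection between the admissible classes $\AS_{\e,\rho_\e}(\xi)$ and $\AS_{\tilde\e,1}(\xi)$ that preserves the $\Curl$-free condition, the circulation at the inner boundary, and the elastic energy. This yields
\[
\bar\psi_\e(\xi)\,|\log\e| \;=\; |\log\tilde\e|\,\psi_{\tilde\e}(\xi).
\]
In parallel, the distributional solution $\beta_0(\xi)$ of \eqref{dislor2} is homogeneous of degree $-1$, so $f(t):=\int_{B_1\setminus B_t}W(\beta_0(\xi))\,dx$ satisfies $f(\e)=f(\rho_\e)+f(\tilde\e)$ by additivity together with $\int_{B_{\rho_\e}\setminus B_\e}W(\beta_0)\,dx = f(\tilde\e)$. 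Applying \eqref{paper3 stima psi eps} at both scales $\e$ and $\tilde\e$ and subtracting yields
\[
\bigl[\psi_\e(\xi)-\bar\psi_\e(\xi)\bigr]\,|\log\e| \;=\; f(\rho_\e) + O(|\xi|^2).
\]
The pointwise bound $|\beta_0(x)|\le c|\xi|/|x|$ gives $f(\rho_\e)\le C|\xi|^2|\log\rho_\e|$, so dividing by $|\log\e|$ and using hypothesis~(i) produces $\psi_\e(\xi)-\bar\psi_\e(\xi)=o(|\xi|^2)$ uniformly. Combined with the trivial inequality $\bar\psi_\e\le\psi_\e$ (obtained by restricting a minimizer of $\psi_\e$ to $B_{\rho_\e}\setminus B_\e$) and the lower bound $\psi_\e(\xi)\ge c|\xi|^2$ from Proposition~\ref{paper3 prop bound}, this establishes $\bar\psi_\e(\xi)=\psi_\e(\xi)(1+o(\e))$ uniformly in $\xi$; pointwise convergence to $\psi(\xi)$ then follows from Proposition~\ref{paper3 prop log}.

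For $\tilde\psi_\e$ the inequality $\tilde\psi_\e\ge\bar\psi_\e$ is immediate, since the prescribed trace only shrinks the admissible class. The matching upper bound requires constructing a curl-free competitor whose tangential trace agrees with $\hat\beta\cdot t$ on both $\partial B_\e$ and $\partial B_{\rho_\e}$: the natural candidate glues a rescaled minimizer of $\bar\psi_\e$ to $\hat\beta$ inside two thin annular transition layers adjacent to the boundary circles, exploiting the fact that the tangential mismatch has zero average on each circle (both strains carry circulation $\xi$) to lift it by a curl-free correction. \emph{The main obstacle lies here}: the correction must absorb the trace mismatch while costing only $O(|\xi|^2/|\log\e|)$ in energy. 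This is achieved by a scale-adapted Poincar\'e--Korn estimate on each transition layer, combined with the $|\xi|/|x|$ decay \eqref{crescita beta} of $\hat\beta$ and the corresponding decay of the bulk minimizer; after dividing by $\psi_\e\ge c|\xi|^2$ one obtains $\tilde\psi_\e(\xi)=\psi_\e(\xi)(1+o(\e))$ uniformly, completing the proof.
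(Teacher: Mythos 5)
The paper does not prove this proposition; it is quoted from \cite{glp} (Remark 7 and Proposition 8), so there is no in-text argument to compare against. Evaluated on its own merits, your treatment of $\bar\psi_\e$ is correct and cleanly organised: the change of variables gives the exact identity $\bar\psi_\e(\xi)\,|\log\e| = |\log\tilde\e|\,\psi_{\tilde\e}(\xi)$ with $\tilde\e=\e/\rho_\e$, the $-1$-homogeneity of $\beta_0$ gives $f(\e)=f(\rho_\e)+f(\tilde\e)$, and applying \eqref{paper3 stima psi eps} at both scales and subtracting yields $[\psi_\e(\xi)-\bar\psi_\e(\xi)]|\log\e|=f(\rho_\e)+O(|\xi|^2)$; together with $f(\rho_\e)\le C|\xi|^2|\log\rho_\e|$, hypothesis~(i) (which indeed forces $|\log\rho_\e|/|\log\e|\to0$), and the lower bound from Proposition~\ref{paper3 prop bound}, this gives the uniform relative estimate. (The auxiliary observation $\bar\psi_\e\le\psi_\e$ is not needed, since the subtraction already controls the difference from both sides.)

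For $\tilde\psi_\e$, however, your argument has a genuine gap precisely where you locate the main obstacle. The gluing competitor you describe requires \emph{local} $L^2$ (or pointwise) control of the bulk profile in a transition shell adjacent to $\partial B_\e$ and to $\partial B_{\rho_\e}$, but the minimiser of $\bar\psi_\e$ carries no such a priori bound: the energy controls only $\|\sym\beta\|_{L^2}$ on the full annulus, and there is a free constant skew matrix that cannot be normalised simultaneously in both transition shells, while a Korn-type inequality on the whole annulus $B_{\rho_\e}\setminus B_\e$ is not scale-invariant as $\rho_\e/\e\to\infty$. Asserting ``the corresponding decay of the bulk minimizer'' is exactly the missing step; a scale-adapted Poincar\'e--Korn estimate on a fixed-aspect-ratio shell does not fix the uncontrolled skew constant without additional work (e.g.\ elliptic regularity for the Euler--Lagrange system, or an averaging argument to select a good shell plus a propagation estimate for the skew part). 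The clean route — and presumably what \cite{glp} does — is to replace the $\bar\psi_\e$-minimiser by the explicit profile $\beta_0(\xi)$ restricted to $B_{\rho_\e}\setminus B_\e$: it lies in $\AS_{\e,\rho_\e}(\xi)$, has the pointwise decay $|\beta_0(x)|\le c|\xi|/|x|$ by construction, and is energy-near-optimal by \eqref{paper3 stima psi eps} combined with your own scaling identity, so that $\beta_0-\hat\beta$ is a gradient whose oscillation on a dyadic shell is $O(|\xi|)$ by \eqref{crescita beta}, and the cutoff correction then costs $O(|\xi|^2)$ in energy. With that substitution your outline closes; as written, it does not.
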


Now, we can define the density $\f \colon \R^2 \to [0,+\infty)$ as the relaxation of the self-energy $\psi$,
\begin{equation} \label{self}
\varphi(\xi):=	\inf \left\{  \sum_{k=1}^N   \lambda_k \psi (\xi_k) \, \colon \, \sum_{k=1}^N \lambda_k \xi_k = \xi , \, N \in \N, \, \lambda_k \geq 0, \, \xi_k \in \mathbb{S}  \right\} \,.
\end{equation}
The properties of $\varphi$ are summarised in the following proposition. 

\begin{proposition}\label{paper3 properties density}
The function $\f$ defined in \eqref{self} is convex and positively $1$-homogeneous, that is
\[
\f(\lambda \xi) = \lambda \f (\xi), \quad \text{for every } \xi \in \R^2, \, \lambda >0\,.
\]
Moreover there exists a constant $c>0$ such that 
\begin{equation} \label{self stima}
c^{-1} |\xi| \leq \f (\xi) \leq c |\xi| \,,
\end{equation}
for every $\xi \in \R^2$. In particular, the infimum in \eqref{self} is actually a minimum.
\end{proposition}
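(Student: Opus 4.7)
The four claims split naturally. The plan is to establish positive $1$-homogeneity and subadditivity almost for free from the definition, then derive the two-sided linear bound from \eqref{paper3 psi bound bis}, and finally argue existence of a minimiser via a Carath\'{e}odory-style reduction.

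First, positive $1$-homogeneity follows by rescaling: if $\xi = \sum_k \lambda_k \xi_k$ is an admissible decomposition, then so is $\lambda\xi = \sum_k (\lambda \lambda_k) \xi_k$, with cost multiplied by $\lambda > 0$; the reverse inequality is the same argument applied with $\lambda^{-1}$. Subadditivity $\varphi(\xi + \eta) \leq \varphi(\xi) + \varphi(\eta)$ is immediate by concatenating near-optimal decompositions, and combined with $1$-homogeneity it yields convexity via
\[
\varphi(t\xi + (1-t)\eta) \leq \varphi(t\xi) + \varphi((1-t)\eta) = t \varphi(\xi) + (1-t) \varphi(\eta) \,.
\]
For the upper bound $\varphi(\xi) \leq c|\xi|$ I would pick linearly independent $b_1, b_2 \in \mathcal{S}$, write $\xi = a_1 b_1 + a_2 b_2$ with $|a_i| \leq C|\xi|$, and use that $\mathrm{sgn}(a_i)\, b_i \in \mathbb{S}$ since $\mathbb{S}$ is closed under negation; the admissible decomposition $\xi = |a_1|(\mathrm{sgn}(a_1) b_1) + |a_2|(\mathrm{sgn}(a_2) b_2)$ then has cost controlled by $c(|a_1||b_1|^2 + |a_2||b_2|^2) \leq C'|\xi|$ via the quadratic bound in \eqref{paper3 psi bound bis}.

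The lower bound is the delicate step. I would use that $\mathbb{S} = \Span_\Z \mathcal{S}$ is a discrete subgroup of $\R^2$ (the natural standing assumption in all physically relevant cases), so that $m := \min\{|\eta| : \eta \in \mathbb{S} \setminus \{0\}\} > 0$. For any decomposition with nonzero $\xi_k$, \eqref{paper3 psi bound bis} gives $\psi(\xi_k) \geq c^{-1}|\xi_k|^2 \geq c^{-1} m |\xi_k|$, and summing together with the triangle inequality yields
\[
\sum_k \lambda_k \psi(\xi_k) \geq c^{-1} m \sum_k \lambda_k |\xi_k| \geq c^{-1} m \Bigl| \sum_k \lambda_k \xi_k \Bigr| = c^{-1} m |\xi| \,.
\]
I expect this linearisation trick — trading the quadratic bound for a linear one by exploiting discreteness of $\mathbb{S}$ — to be both the key idea and the main obstacle: without discreteness the claim $\varphi(\xi) \geq c^{-1} |\xi|$ would actually fail, as arbitrarily small Burgers vectors would make $\varphi$ collapse to zero.

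Finally, to show the infimum is attained I would rewrite the definition as $\varphi(\xi) = \inf\{y \in \R : (\xi, y) \in K\}$, where $K \subset \R^3$ is the convex conic hull of $\{(\eta, \psi(\eta)) : \eta \in \mathbb{S}\}$, and invoke Carath\'{e}odory's theorem for convex cones in $\R^3$ to restrict the infimum to at most three summands. The energy bound $\sum_k \lambda_k |\xi_k|^2 \leq C|\xi|$ along a minimising triple then confines the relevant $\xi_k$ to a finite subset of $\mathbb{S}$ (any contributions where $\lambda_k |\xi_k| \to 0$ can be dropped without changing the limit), reducing the problem to a finite-dimensional minimisation whose minimum is attained by standard compactness.
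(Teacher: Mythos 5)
The paper itself does not give a proof of this proposition (it is imported from the framework of \cite{glp}), so there is no internal proof to compare against; your proof is the standard argument and is essentially correct. Homogeneity by rescaling the weights, subadditivity by concatenating near-optimal decompositions, convexity from the two, and the upper bound by decomposing $\xi$ along two independent Burgers vectors (using that $\mathbb{S}$ is closed under negation) are all sound, and your Carath\'eodory reduction to three summands followed by the discard-the-escaping-terms compactness argument does close the existence of a minimiser.

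One point worth making explicit, which you correctly flag as the crux: the lower bound genuinely requires that $\mathbb{S} = \Span_{\Z}\mathcal{S}$ be a \emph{discrete} subgroup of $\R^2$ (equivalently, a lattice), so that $m := \min\{|\eta| : \eta \in \mathbb{S}\setminus\{0\}\}>0$. This is not a consequence of $\mathcal{S}$ being a finite spanning set --- for instance $\mathcal{S} = \{e_1, \sqrt{2}\,e_1, e_2\}$ generates a $\mathbb{S}$ that is dense along the $e_1$-axis, and then the decomposition $\xi = (|\xi|/\delta)\eta$ with $\eta\in\mathbb{S}$ of length $\delta$ has cost $\lesssim |\xi|\delta \to 0$, so $\varphi$ collapses, exactly as you say. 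The paper never states this assumption explicitly, but it is used implicitly elsewhere (e.g.\ the compactness proof in Section~4.1 asserts ``the vectors $\xi_{n,i}$ are bounded away from zero''), so you are right to treat it as a standing hypothesis of the model. Your application of the quadratic bound \eqref{paper3 psi bound bis} in the form $\psi(\xi_k) \geq c^{-1}|\xi_k|^2 \geq c^{-1}m|\xi_k|$ followed by the triangle inequality is exactly the ``linearisation trick'' the argument hinges on.

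For the attainment of the infimum, your Carath\'eodory-for-cones reduction is fine, but the step ``drop contributions with $\lambda_k|\xi_k|\to 0$'' deserves one more sentence: after passing to a minimising sequence of triples, the bound $\lambda_k^n|\xi_k^n|^2 \leq c\,(\varphi(\xi)+1)$ together with $|\xi_k^n|\geq m$ gives $\lambda_k^n|\xi_k^n| \leq c(\varphi(\xi)+1)/m$, and if $|\xi_k^n|\to\infty$ then both the vector contribution $\lambda_k^n\xi_k^n$ and the cost contribution $\lambda_k^n\psi(\xi_k^n)$ tend to zero, so in the limit the constraint $\xi = \sum_k\lambda_k\xi_k$ still holds with only the surviving (bounded, hence finitely many by discreteness) $\xi_k$'s. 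Spelling that out removes the only soft spot in the write-up.
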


\subsection{Korn type inequality} \label{paper3 gen korn sec}

We will now recall the generalised Korn inequality 
proved in \cite[Theorem 11]{glp}.
\begin{theorem}[Generalised Korn inequality] \label{thm:korn}
There exists a constant $C>0$, depending only on $\Omega$, with the following property: for every $\beta \in L^1 (\Omega; \matricidue)$ with
\[
\Curl \beta = \mu \in \misure \,,
\]
we have
\begin{equation} \label{korn}
\int_{\Omega}  |  \beta -A   |^2 \, dx \leq C \left(    \int_{\Omega}  |\sym{\beta}|^2 \, dx + |\mu| (\Omega)^2 \right) \,, 	
\end{equation}
where $A$ is the constant $2 \times 2$ antisymmetric matrix defined by 
$A:= \frac{1}{|\Om|}\int_{\Omega} \sk{\beta} \, dx$.
\end{theorem}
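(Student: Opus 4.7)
My plan is to reduce the vector-valued statement to a scalar one, exploiting that $2\times 2$ antisymmetric matrices form a one-dimensional space, and then to prove the scalar estimate by a Bogovskii--Ne\v{c}as duality argument.

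\textbf{Scalar reduction.} Writing $\sk{\beta} = a J$ with $a = \tfrac{1}{2}(\beta_{21}-\beta_{12})$ and $J$ the rotation matrix of \eqref{paper3 J}, and setting $\bar a := \tfrac{1}{|\Omega|}\int_\Omega a\,dx$, the constant matrix in the statement is $A = \bar a J$. The $L^2$-orthogonality of symmetric and antisymmetric matrices gives
\[
\int_\Omega |\beta - A|^2\,dx = \int_\Omega |\sym{\beta}|^2\,dx + 2 \int_\Omega (a - \bar a)^2\,dx,
\]
so \eqref{korn} is equivalent to the scalar estimate
\[
\|a-\bar a\|_{L^2}^2 \le C\bigl(\|\sym{\beta}\|_{L^2}^2 + |\mu|(\Omega)^2\bigr).
\]
A direct calculation from \eqref{paper3 Curl def} gives $\Curl(aJ) = -\nabla a$, so $\Curl \beta = \mu$ rewrites as the scalar equation
\[
\nabla a = \Curl(\sym{\beta}) - \mu \qquad \text{in } \distr(\Omega;\R^2).
\]

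\textbf{Duality step.} I would estimate $\|a-\bar a\|_{L^2}$ by the dual characterisation as a supremum of $\int_\Omega (a-\bar a)\phi\,dx$ over $\phi \in L^2(\Omega)$ with $\int_\Omega \phi\,dx = 0$ and $\|\phi\|_{L^2}=1$. For each such $\phi$, Bogovskii's theorem on the Lipschitz domain $\Omega$ produces $\psi\in H^1_0(\Omega;\R^2)$ with $\operatorname{div}\psi = \phi$ and $\|\nabla\psi\|_{L^2}\le C\|\phi\|_{L^2}$. Integrating by parts and using the scalar equation,
\[
\int_\Omega (a-\bar a)\phi\,dx = -\int_\Omega \Curl(\sym{\beta})\cdot\psi\,dx + \langle \mu,\psi\rangle.
\]
Transferring the $\Curl$ back onto $\psi$ bounds the first term by $C\|\sym{\beta}\|_{L^2}\|\nabla\psi\|_{L^2}\le C\|\sym{\beta}\|_{L^2}$.

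\textbf{The measure pairing (main obstacle).} Controlling $\langle\mu,\psi\rangle$ by $|\mu|(\Omega)$ is the delicate step, because in two dimensions $H^1_0 \not\hookrightarrow L^\infty$, so the naive bound $\|\psi\|_\infty \, |\mu|(\Omega)$ is not available. I would overcome this either by using a refined Bogovskii solution that additionally lies in $L^\infty$, with $\|\psi\|_\infty \le C\|\phi\|_{L^2}$, obtained by decomposing $\Omega$ into a finite union of star-shaped subdomains and applying a local $W^{1,q}$-Bogovskii with $q>2$; or, alternatively, by regularising $\mu$ via $\mu_\delta = \rho_\delta * \mu$ (after a suitable extension near $\partial\Omega$), using $\|\mu_\delta\|_{L^1}\le|\mu|(\Omega)$ to prove \eqref{korn} for the regularised datum where the pairing is a genuine integral, and then passing to the limit $\delta\to 0$ through weak-$L^2$ compactness of the associated strains and lower semicontinuity of the left-hand side of \eqref{korn}.

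Assembling the two contributions gives $\|a-\bar a\|_{L^2}\le C(\|\sym{\beta}\|_{L^2}+|\mu|(\Omega))$, and \eqref{korn} follows by squaring. The main technical difficulty is precisely the measure pairing above: it forces one out of the straightforward $H^{-1}$--$H^1_0$ duality, since a general finite Radon measure on $\Omega\subset\R^2$ need not belong to $H^{-1}(\Omega)$, and the desired $L^\infty$ control on the Bogovskii test field has to be arranged by hand.
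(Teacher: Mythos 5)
The paper does not prove Theorem~\ref{thm:korn}; it is recalled from \cite[Theorem~11]{glp} without proof, so there is nothing in the text to compare against. Your scalar reduction is nonetheless correct: writing $\sk{\beta}=aJ$, using the orthogonality of symmetric and skew matrices, and noting $\Curl(aJ)=-\nabla a$ does reduce \eqref{korn} to $\|a-\bar a\|_{L^2}^2\le C\bigl(\|\sym{\beta}\|_{L^2}^2+|\mu|(\Omega)^2\bigr)$, and the Bogovskii--Ne\v{c}as duality correctly isolates the pairing $\langle\mu,\psi\rangle$ as the only hard term. You have diagnosed the obstacle accurately.

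What you have not done is close it, and neither of your two workarounds does. The first cannot start: a Bogovskii operator mapping into $W^{1,q}_0\hookrightarrow L^\infty$ with $q>2$ needs the datum $\phi$ in $L^q$, whereas in the duality step $\phi$ is only normalised in $L^2$, and $L^2(\Omega)\not\subset L^q(\Omega)$ for $q>2$. The Bogovskii kernel, on the whole domain or on any star-shaped piece, is a Riesz potential of order one, which in two dimensions does \emph{not} map $L^2$ into $L^\infty$ --- this is exactly the critical, borderline case. The estimate you need, namely a solution $\psi\in H^1_0(\Omega;\R^2)\cap L^\infty(\Omega;\R^2)$ of $\operatorname{div}\psi=\phi$ with $\|\nabla\psi\|_{L^2}+\|\psi\|_{L^\infty}\le C\|\phi\|_{L^2}$, is precisely the Bourgain--Brezis theorem on the divergence equation at the critical exponent: quoting it would make your duality argument correct, but it is a deep result, not a consequence of a star-shaped decomposition plus the standard Bogovskii bounds. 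The regularisation alternative fails for the same reason, because the difficulty is not the irregularity of $\mu$ but the unboundedness of $\psi$. After mollification one has $\|\mu_\delta\|_{L^1}\le|\mu|(\Omega)$, but the only way the pairing $\int_\Omega\mu_\delta\cdot\psi\,dx$ can exploit the $L^1$-control on $\mu_\delta$ is through $\|\mu_\delta\|_{L^1}\|\psi\|_{L^\infty}$, so the same $L^\infty$-requirement on $\psi$ reappears for every $\delta>0$; bounding instead by $\|\mu_\delta\|_{L^2}\|\psi\|_{L^2}$ loses $|\mu|(\Omega)$ entirely since $\|\mu_\delta\|_{L^2}$ blows up as $\delta\to0$. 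You never obtain the inequality at fixed $\delta$, so there is nothing to pass to the limit in; the ``main obstacle'' you flag is in effect the whole theorem.
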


\subsection{Remarks on the distributional Curl} \label{paper3 curl}

We conclude this section with some considerations on the distributional $\Curl$ of admissible strains (see \cite[Remark 1]{glp}).

\begin{remark}[Curl of admissible strains] \label{paper3 rem2}
Let $\mu \in \AD_\e (\Om)$ and $\beta \in \AS_\e (\mu)$. Recalling definition \eqref{paper3 Curl def}, we can define the scalar distribution
\[
\curl \beta_{(i)} := \frac{\de}{\de x_1} \beta_{i2} - \frac{\de}{\de x_2} \beta_{i1} \,,
\]
where $\beta_{(i)}$ denotes the $i$-th row of $\beta$. This means that for any test function $\f$ in $C^{\infty}_c (\Om)$, we can write
\begin{equation} \label{paper3 curl cond}
\langle \curl \beta_{(i)} , \f \rangle = - \int_\Om \beta_{(i)} \cdot J \nabla \f \, dx \,,
\end{equation}
where $J$ is the counter-clockwise rotation of $\pi/2$, as defined in \eqref{paper3 J}. Notice that, if $\beta_{(i)} \in L^2 (\Om;\R^2)$, then \eqref{paper3 curl cond} implies that $\curl \beta_{(i)}$  is well defined also for $\f \in H^1_0(\Om)$ and acts continuously on it. Therefore 
\[
\Curl \beta \in \duale  \quad \text{for every} \quad \beta \in \AS_\e (\mu) \,,
\]
where $\duale$ denotes the dual of the space $H^1_0 (\Om;\R^2)$.

Further, if $\mu = \sum_{i=1}^M \xi_i \, \delta_{x_i} \in \AD_\e (\Om)$, then the circulation condition
\[
\int_{\de B_\e (x_i)} \beta \cdot t \, ds = \xi_i  \,, \quad \text{for every} \quad i=1,\dots, M \,,
\]
can be written as 
\[
\langle \Curl \beta, \f \rangle = \sum_{i=1}^M \xi_i \, c_i \,,
\]
for every $\f \in H^1_0 (\Om)$ such that $\f \equiv c_i$ in $B_\e (x_i)$. If in addition $\f \in C^0 (\Om) \cap H^1_0 (\Om)$, then
\[
\langle \Curl  \beta, \f \rangle = \int_\Om \f \, d\mu \,.
\]
\end{remark}

\section{$\Gamma$-convergence analysis} \label{paper3 gamma}

In this section we will study, by means of $\Gamma$-convergence, the behaviour as $\e \to 0$ of the functionals $\mathcal{F}_\e \colon \misure \times \strain \to \R$ defined in \eqref{energy}, in the energy regime $N_\e \gg |\log \e|$. In Theorem \ref{thm:gamma} we will prove that the $\Gamma$-limit for the sequence $\mathcal{F}_\e$ is given by the functional
  $\mathcal{F} \colon (\misure \cap \duale) \times \lduesimm  \times \ldueanti \to \R$ defined as
\begin{equation} \label{limite}
\mathcal{F} (\mu, S,A):= 
\begin{cases}
 \displaystyle \int_\Omega W(S) \, dx + \int_\Omega \f \left( \displaystyle \frac{d \mu}{d |\mu|} \right) \, d |\mu|   & \text{if }  \, \Curl A = \mu,  \\
+ \infty    &  \text{otherwise} \,,
\end{cases}
\end{equation}
where $\f$ is the energy density introduced in \eqref{self}. 
The topology under which the $\Gamma$-convergence result holds is given by the following definition.

\begin{definition}\label{def:conv}
We say that the family (also referred to as sequence in the following)  $(\mu_\e,\beta_\e) \in \misure \times \strain$ is converging to a triplet $(\mu, S,A) \in \misure \times \lduesimm \times \ldueanti$ if 
\begin{gather}
\frac{\mu_\e}{N_\e}    \weakstar \mu   \quad \text{ in } \quad \misure \,,	\label{conv misure} \\ 
\frac{ \sym{\beta}_\e}{\sqrt{N_\e |\log \e|}   } \weak S  \qquad \text{and} \qquad \frac{\sk{\beta}_\e}{N_\e   }  \weak A   \quad \text{ weakly in } \quad \strain  \,. \label{conv strain} 
\end{gather}
\end{definition}

\begin{theorem}\label{thm:gamma} 
The following $\Gamma$-convergence result holds true. 

\begin{enumerate}[{(i)}]
\item \textnormal{(Compactness)} Let $\e_n \to 0$ and assume that $(\mu_n,\beta_n) \in \misure \times \strain$ is such that 
$\sup_n \mathcal{F}_{\e_n} (\mu_n,\beta_n ) \leq E$, for some positive constant $E$. 
Then there exists 
$$
(\mu,S,A) \in  (\misure \cap \duale ) \times \lduesimm \times \ldueanti,
$$ with $\Curl A = \mu$,
 such that up to subsequences (not relabelled),  
$(\mu_n,\beta_n)$ converges to $(\mu,S,A)$ in the sense of Definition \ref{def:conv}. 

\item \textnormal{(}$\Gamma$\textnormal{-convergence)} The functionals $\mathcal{F}_\e$ defined in \eqref{energy} $\Gamma$-converge to the functional $\mathcal{F}$ defined in \eqref{limite}, with respect to the convergence of Definition \ref{def:conv}. Specifically, for every
\[ 
 (\mu,S,A) \in (\misure \, \cap \, \duale) \times \lduesimm \times \ldueanti
\] 
such that $\Curl A = \mu$ we have: 
\begin{itemize}
\item \textnormal{(}$\Gamma$\textnormal{-liminf inequality)} 
For all sequences $(\mu_\e,\beta_\e) \in \misure \times \strain$ converging to $(\mu,S,A)$ in the sense of Definition \ref{def:conv}, 
\[
\mathcal{F} (\mu,S,A) \leq \liminf_{\e \to 0} \mathcal{F}_\e (\mu_\e,\beta_\e) \,.
\]
\item \textnormal{(}$\Gamma$\textnormal{-limsup inequality)}
	There exists a recovery sequence $(\mu_\e,\beta_\e)\in 
	\misure  \times \strain$, such that $(\mu_\e,\beta_\e)$ converges to $(\mu,S,A)$ in the sense of Definition \ref{def:conv}, and
\[
\limsup_{\e \to 0} \mathcal{F}_\e (\mu_\e,\beta_\e)  \leq \mathcal{F} (\mu,S,A)  \,.
\]
\end{itemize}
\end{enumerate}
\end{theorem}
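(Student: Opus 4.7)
The three parts exploit the natural splitting of $E_\e$ into a far-field contribution, supported outside the hard-core balls $\cup_i B_{\rho_\e}(x_i)$, and a self-energy concentration inside the annuli $B_{\rho_\e}(x_i)\setminus B_\e(x_i)$. In the regime $N_\e\gg|\log\e|$, the symmetric part of $\beta_\e$ rescaled by $\sqrt{N_\e|\log\e|}$ is of strictly lower order than the skew part rescaled by $N_\e$; this decouples the two contributions in the limit, producing the elastic term $\int_\Om W(S)\,dx$ from the far-field and the plastic term $\int_\Om\f\,d|\mu|$ from the cores.

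\emph{Compactness.} The finite energy bound forces admissibility of $(\mu_n,\beta_n)$, and the coercivity \eqref{coercivity} yields $\nor{\sym{\beta}_n}_{L^2}^2\le c\,E\,N_{\e_n}|\log \e_n|$, hence a weak $L^2$ limit $S$ for $\sym{\beta}_n/\sqrt{N_{\e_n}|\log \e_n|}$. Applying Proposition \ref{paper3 prop bound} on each annulus $B_{\rho_{\e_n}}(x_i^n)\setminus B_{\e_n}(x_i^n)$ and summing gives $\sum_i|\xi_i^n|^2\le c\,E\,N_{\e_n}$; by discreteness of $\mathbb S\setminus\{0\}$ one has $|\xi_i^n|\ge c>0$, so $|\mu_n|(\Om)\le C\,N_{\e_n}$ and $\mu_n/N_{\e_n}\weakstar\mu$ up to a subsequence. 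Since $\beta_n\equiv 0$ outside $\Om_{\e_n}(\mu_n)$ and has zero skew average in $\Om_{\e_n}(\mu_n)$, the constant matrix in Theorem \ref{thm:korn} vanishes, so \eqref{korn} yields
\[
\nor{\beta_n}_{L^2}^2\le C\bigl(N_{\e_n}|\log \e_n|+N_{\e_n}^2\bigr).
\]
Dividing by $N_{\e_n}^2$ and using $|\log\e_n|/N_{\e_n}\to 0$ gives an $L^2$ bound on $\sk{\beta}_n/N_{\e_n}$ and hence a weak limit $A$. The identity $\Curl A=\mu$ follows by dividing $\Curl\beta_n=\mu_n$ in $\duale$ (Remark \ref{paper3 rem2}) by $N_{\e_n}$ and noting that $\sym{\beta}_n/N_{\e_n}=\sqrt{|\log\e_n|/N_{\e_n}}\,(\sym{\beta}_n/\sqrt{N_{\e_n}|\log\e_n|})\to 0$ strongly in $L^2$, so only the skew part contributes to the limit curl.

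\emph{$\Gamma$-liminf.} Setting $S_n:=\sym{\beta}_n/\sqrt{N_{\e_n}|\log\e_n|}$, decompose
\[
\mathcal F_{\e_n}(\mu_n,\beta_n)=\int_{\Om\setminus\cup_i B_{\rho_{\e_n}}(x_i^n)}\!W(S_n)\,dx+\frac{1}{N_{\e_n}|\log\e_n|}\sum_i\int_{B_{\rho_{\e_n}}(x_i^n)\setminus B_{\e_n}(x_i^n)}\!W(\beta_n)\,dx.
\]
For the first term, $|\cup_i B_{\rho_{\e_n}}(x_i^n)|\to 0$ by assumption (ii), so $\chi_{\Om\setminus\cup_i B_{\rho_{\e_n}}(x_i^n)}S_n\weak S$ and lower semicontinuity of the quadratic functional yields the bound $\int_\Om W(S)\,dx$. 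For the second, the definition \eqref{psi bar} and Proposition \ref{prop glp} give $\int_{B_{\rho_{\e_n}}(x_i^n)\setminus B_{\e_n}(x_i^n)}W(\beta_n)\ge\bar\psi_{\e_n}(\xi_i^n)|\log\e_n|=\psi(\xi_i^n)(1+o(1))|\log\e_n|$, with $o(1)$ uniform in $\xi_i^n$; since $\f\le\psi$ by \eqref{self} and the correction $o(1)\sum_i|\xi_i^n|^2/N_{\e_n}$ vanishes thanks to the compactness bound, one reduces to $\liminf_n N_{\e_n}^{-1}\sum_i\f(\xi_i^n)$. Positive $1$-homogeneity and convexity of $\f$ (Proposition \ref{paper3 properties density}) and $\mu_n/N_{\e_n}\weakstar\mu$ then give $\liminf\ge\int_\Om\f(d\mu/d|\mu|)\,d|\mu|$ by Reshetnyak lower semicontinuity.

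\emph{$\Gamma$-limsup.} A standard density argument reduces to the case $\mu=\sum_k\xi_k\delta_{y_k}$ with finitely many distinct atoms, $S\in C^\infty_c(\Om;\matricidue_{\mathrm{sym}})$, and $A\in L^2\cap C^\infty(\Om\setminus\mathrm{spt}\,\mu)$ skew with $\Curl A=\mu$. For each atom choose a minimising decomposition $\xi_k=\sum_j\lambda_j^k\eta_j^k$ in \eqref{self} with $\eta_j^k\in\mathbb S$, place $\lfloor N_{\e_n}\lambda_j^k\rfloor$ dislocations of Burgers vector $\eta_j^k$ on a $2\rho_{\e_n}$-spaced pattern inside a ball $B_{r_{\e_n}}(y_k)$ with $\rho_{\e_n}\sqrt{N_{\e_n}}\ll r_{\e_n}\to 0$ (feasible by \eqref{compatibility}); this defines $\mu_{\e_n}$ with $\mu_{\e_n}/N_{\e_n}\weakstar\mu$. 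Build the singular strain by gluing the fundamental solutions $\beta_0(\eta_j^k)(\cdot-x_i^n)$ of \eqref{dislor2} on each annulus $B_{\rho_{\e_n}}(x_i^n)\setminus B_{\e_n}(x_i^n)$ and by a cut-off outside; Proposition \ref{paper3 prop log} and the minimality of the decomposition give core energy $\sum_j\lambda_j^k\psi(\eta_j^k)|\log\e_n|(1+o(1))=\f(\xi_k)|\log\e_n|(1+o(1))$ per atom, matching $\int\f(d\mu/d|\mu|)\,d|\mu|$ after division by $N_{\e_n}|\log\e_n|$. Add the smooth contribution $\sqrt{N_{\e_n}|\log\e_n|}\,S$, producing $\int_\Om W(S)\,dx$ by continuity of the quadratic energy, and a smooth skew correction $N_{\e_n}\tilde A_{\e_n}$ obtained by solving an auxiliary elliptic (Helmholtz-type) problem, tuned so that $\sk{\beta}_{\e_n}/N_{\e_n}\weak A$ and so that the circulation conditions \eqref{intro3 dis} are met exactly; its $L^2$ cost is $o(1)$ because the discrepancy between the distributional curl of the singular field and $\mu_{\e_n}$ is of mass $o(N_{\e_n})$ in $H^{-1}$.

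\emph{Main obstacle.} The delicate point is the recovery-sequence construction, where three constraints must be met simultaneously: the pointwise circulation conditions around every $B_{\e_n}(x_i^n)$, the weak-$*$ limit of $\mu_n/N_{\e_n}$, and the prescribed weak limit $A$ of $\sk{\beta}_n/N_{\e_n}$ (with the compatibility $\Curl A=\mu$). The supercritical scaling decouples $S$ from $(A,\mu)$, but $A$ is still coupled to $\mu$ through its curl, so one must design the micro-pattern of dislocations and the accompanying skew correction so that the macroscopic skew limit is exactly $A$ rather than some other skew field with the same curl, all the while preserving the sharp energy cost $\f(d\mu/d|\mu|)\,d|\mu|$.
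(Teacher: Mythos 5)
Your overall decomposition of the energy into far-field and core contributions, and your account of the $\Gamma$-liminf argument, align well with the paper. However, there are two genuine gaps.

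\textbf{Compactness of the skew part.} You apply the generalised Korn inequality (Theorem~\ref{thm:korn}) directly to $\beta_n$ with the term $|\mu_n|(\Om)^2$ on the right-hand side, but this is not what the theorem provides: it bounds $\int|\beta-A|^2$ in terms of $\int|\sym{\beta}|^2$ and $|\Curl\beta|(\Om)^2$, and for $\beta_n\in\AS_{\e_n}(\mu_n)$ (extended by zero into the cores) the distributional curl is a surface measure supported on $\cup_i\de B_{\e_n}(x_{i,n})$, namely $\Curl\beta_n = -\sum_i(\beta_n\cdot t)\,\mathcal H^1\zak\de B_{\e_n}(x_{i,n})$. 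The circulation condition only controls $\int_{\de B_{\e_n}(x_i)}\beta_n\cdot t\,ds=\xi_{i,n}$, not $\int_{\de B_{\e_n}(x_i)}|\beta_n\cdot t|\,ds$, so there is no a priori bound $|\Curl\beta_n|(\Om)\lesssim|\mu_n|(\Om)$. The paper addresses exactly this obstruction: it constructs a modified strain $\tilde\beta_n$ (equation~\eqref{defbt}) by extending $\beta_n$ into each core via a Korn-plus-extension argument on the annuli $B_{2\e_n}(x_{i,n})\setminus B_{\e_n}(x_{i,n})$, and verifies (equation~\eqref{defbt2}) that $|\Curl\tilde\beta_n|(\Om)=|\mu_n|(\Om)$ while $\int_\Om|\sym{\tilde\beta}_n|^2\lesssim\int_\Om|\sym\beta_n|^2$. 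Only then can Theorem~\ref{thm:korn} be applied (to $\tilde\beta_n$, whose skew average is a nonzero constant $\tilde A_n$), after which the zero skew-average condition in $\AS_{\e_n}(\mu_n)$ is used, together with orthogonality of symmetric and skew matrices, to transfer the bound back to $\beta_n$. Your version skips this entire construction, which is the key technical step of Part~3.

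\textbf{$\Gamma$-limsup: the density reduction to atomic $\mu$ is vacuous.} You reduce to the case $\mu=\sum_k\xi_k\delta_{y_k}$ while insisting on an $A\in\ldueanti$ (smooth away from the atoms) with $\Curl A=\mu$. But no such $A$ exists when $\mu$ is a nontrivial atomic measure: since $A$ is skew, $A=\left(\begin{smallmatrix}0&u\\-u&0\end{smallmatrix}\right)$ for some scalar $u\in L^2(\Om)$ and $\Curl A=Du$, so $\mu$ must be the distributional gradient of a $BV$ function, hence curl-free as a vector-valued measure (Remark~\ref{umu}); a nonzero Dirac mass $\xi\delta_y$ is not curl-free (indeed $\xi_1\de_2\delta_y\neq\xi_2\de_1\delta_y$ unless $\xi=0$), and $Du$ for $u\in BV(\Om)$ carries no atoms. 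So the triples $(\mu,S,A)$ you propose to approximate by are not in the domain of $\mathcal F$ at all; the reduction is to an empty class. The paper instead reduces (Steps~1--2) to $\mu$ \emph{locally constant}, $\mu=\sum_l\rchi_{\Om_l}\xi_l\,dx$, with $S\in C^1$, which does lie in the domain; the atoms in the construction are the \emph{approximating} dislocation measures $\mu_\e$ obtained from the lattice-type construction of Lemma~\ref{lemma mis}, not the limit $\mu$. The subsequent strain construction (equations~\eqref{seq1}--\eqref{seq2}) is also more delicate than your sketch suggests: it superposes the rescaled limit fields with core strains $\hat A_\e$, diffused-curl fields $\tilde K_\e$, and Neumann corrections $Q_\e,R_\e$ chosen so that the total $\Curl$ of $S_\e+A_\e$ is exactly the boundary measure $\hat\mu_\e^\e$, giving admissibility without an error term to be absorbed.
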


\begin{remark}\label{umu}
Since $A$ is antisymmetric, there exist $u \in L^2(\Om)$ such that
\begin{equation}\label{au}
A=\left( \begin{matrix}
 0 &  u \\
 -u  &  0 \\	
 \end{matrix}
\right).
\end{equation}
Notice that $\Curl A = D u$.  Therefore,  $\Curl A  \in  \misure$ implies that $u \in BV(\Om)$ and curl $\mu=0$.
\end{remark}

\subsection{Compactness}

We will prove the compactness statement in Theorem \ref{thm:gamma}. Assume that $(\mu_n , \beta_n)$ is a sequence in $\misure \times \strain$ such that 
\begin{equation} \label{equibound}
\sup_n \mathcal{F}_{\e_n} (\mu_n,\beta_n ) \leq E \,.
\end{equation} 
The proof is divided into four parts.
\medskip

\noindent \textbf{Part 1.} Compactness of the rescaled measures. 
\medskip
 
\noindent Let $\mu_{n} := \sum_{i=1}^{M_n} \xi_{n,i}  \delta_{x_{n,i}} \in \AD_{\e_n} (\Om)$. We show that the total variation of $\mu_n/N_{\e_n}$ is uniformly bounded, i.e., there exists $C>0$ such that
\begin{equation} \label{paper3 tot bounded}
\frac{1}{N_{\e_n}} |\mu_n| (\Om) = \frac{1}{N_{\e_n}} \sum_{i=1}^{M_n} |\xi_{n,i}| \leq C \,,
\end{equation}
for every $n \in \N$.
Since the function $y \mapsto \beta_{n} (x_{n,i}+ y)$ belongs to
$\AS_{\e_n,\rho_{\e_n}} (\xi_{n,i})$, we have 
\begin{align*}
E  & \geq \mathcal{F}_{\e_n} (\mu_n,\beta_n) \geq  \frac{1}{N_{\e_n} |\log \e_n|} 
\sum_{i=1}^{M_n} \int_{ B_{\rho_{\e_n}} (x_{n,i}) \setminus B_{\e_n} (x_{n,i})    } W(\beta_n) \, dx \\
& =\frac{1}{N_{\e_n} |\log \e_n|} \sum_{i=1}^{M_n} \int_{ B_{\rho_{\e_n}} (0) \setminus B_{\e_n} (0)    } W(\beta_n (x_{n,i}+ y) ) \, dy 
\geq \frac{1}{N_{\e_n}} \sum_{i=1}^{M_n}  \bar{\psi}_{\e_n} (\xi_{n,i}) \,,
\end{align*}
where $\bar{\psi_\e}$ is defined in \eqref{psi bar}. Let $\psi$ be the self-energy in \eqref{psi} and set $c:= \frac{1}{2} \min_{|\xi|=1} \psi (\xi)$. Notice that $c>0$, by \eqref{paper3 psi bound bis}. By Proposition \ref{prop glp}, $\bar{\psi_\e} \to \psi$ pointwise as $\e \to 0$, therefore for sufficiently large $n$, we have $\bar{\psi}_{\e_n}(\xi) \geq c$ for every $\xi \in \R^2$ with $|\xi|=1$. Hence,
\begin{align*}
\frac{1}{N_{\e_n}} \sum_{i=1}^{M_n}   \bar{\psi}_{\e_n} (\xi_{n,i}) & = \frac{1}{N_{\e_n}} \sum_{i=1}^{M_n}  |\xi_{n,i}|^2 \, \bar{\psi}_{\e_n} \left( \frac{\xi_{n,i}}{|\xi_{n,i}|} \right) \geq  \frac{c}{N_{\e_n}} \sum_{i=1}^{M_n}  |\xi_{n,i}|^2  \\   
& \geq  \frac{c}{N_{\e_n}} \sum_{i=1}^{M_n}  |\xi_{n,i}| = c \, \frac{|\mu_n| (\Om)}{N_{\e_n}} \,.
\end{align*}
The last inequality follows from the fact that the vectors $\xi_{n,i}$ are bounded away from zero. By putting together the above estimates, we conclude that \eqref{paper3 tot bounded}, and in turn \eqref{conv misure} hold true.
\medskip

\noindent \textbf{Part 2.} Compactness of the rescaled $\sym{\beta_n}$. 
\medskip

\noindent This follows immediately by the bounds on the energy \eqref{coercivity}. Indeed by \eqref{equibound}, \eqref{Ee} and \eqref {coercivity},
\begin{equation} \label{bound sym}
C N_{\e_n} |\log \e_n| \geq C E_{\e_n} (\mu_n , \beta_n) \geq C \int_{\Om} |\sym{\beta}_n|^2 \, dx  \,,
\end{equation}
and the weak compactness of $ \sym{\beta}_n / \sqrt{ N_{\e_n} |\log \e_n| }$ in $\strain$ follows.
\medskip

\noindent \textbf{Part 3.} Compactness of the rescaled $\sk{\beta}_n$. 
\medskip

\noindent
Now that the bounds \eqref{paper3 tot bounded}-\eqref{bound sym} are established, the idea is to apply the generalised Korn inequality of Theorem \ref{thm:korn}, in order 
to obtain a uniform upper bound for $\sk{\beta}_n /N_{\e_n} $ in $\strain$. In order to do that, we need a control over $|\Curl \beta_n| (\Om)$. In fact, even if $\beta_n$ is related to $\mu_n$ by circulation compatibility conditions, the relationship between 
$|\Curl \beta_n| (\Om)$ and $|\mu_n|(\Om)$ has to be clarified. In order to obtain a bound for $|\Curl \beta_n| (\Om)$ in terms of $|\mu_n|(\Om)$, we will define new strains $\tilde{\beta}_n$ that have the same order of energy of $\beta_n$ and that satisfy 
$|\Curl \tilde{\beta}_n| (\Om)=| \mu_n | (\Om)$.

Recall that $\mu_n = \sum_{i=1}^{M_n} \xi_{i,n} \delta_{x_{i,n}}$. 
Define the annuli $C_{i,n}:= B_{2 \e_n} (x_{i,n}) \setminus B_{ \e_n} (x_{i,n})$ and the functions $K_{i,n} \colon C_{i,n} \to \matricidue$ by
\[
K_{i,n} (x) := \frac{1}{2 \pi} \xi_{i,n} \otimes J \frac{x-x_{i,n}}{|x-x_{i,n}|^2} \,,
\]
where $J$ is the counter-clockwise rotation of $\pi/2$.
It is immediate to check that
\[
	\int_{C_{i,n}} |K_{i,n}|^2 \, dx = C |\xi_{i,n}|^2 \,,
\]
where the constant $C>0$ does not depend on $\e_n$. By Proposition \ref{paper3 prop bound} we also have
\[
\int_{C_{i,n}} |\sym{\beta}_n|^2 \, dx \geq C |\xi_{i,n}|^2 \,,
\]
where, again, the constant $C>0$ does not depend on $\e_n$. Therefore 
\begin{equation} \label{stima K}
	\int_{C_{i,n}} |K_{i,n}|^2 \, dx \leq C \int_{C_{i,n}} |\sym{\beta}_n|^2 \, dx \,. 	
\end{equation}
Note that $\Curl K_{i,n}= \xi_{i,n} \delta_{x_{i,n}}$ in $\distr(\R^2;\R^2)$, hence $\Curl (\beta_n - K_{i,n})=0$ in $C_{i,n}$. Moreover
$\int_{\de B_{\e_n}(x_{i,n})} (\beta_n - K_{i,n}) \cdot t \, ds = 0$, therefore there exists $v_{i,n} \in H^1 (C_{i,n}; \R^2)$ such that
$\nabla v_{i,n}= \beta_n - K_{i,n}$ in $C_{i,n}$. By \eqref{stima K},
\[
\int_{C_{i,n}} | \nabla \sym{v}_{i,n}    |^2 \, dx \leq C \int_{C_{i,n}} |\sym{\beta}_n|^2 \, dx \,.
\]
By applying the classic Korn inequality we get
\[
\int_{C_{i,n}} | \nabla v_{i,n}  - A_{i,n}  |^2 \, dx   \leq C \int_{C_{i,n}} | \nabla \sym{v}_{i,n}    |^2 \, dx \leq C \int_{C_{i,n}} |\sym{\beta}_n|^2 \, dx \,,
\]
for some constant matrix $A_{i,n} \in \matricidue_{\text{skew}}$ and some constant $C>0$. By standard extension methods, there exists $u_{i,n} \in H^1 (B_{2\e_n} (x_{i,n}) ; \R^2   ) $ such that $\nabla u_{i,n} = \nabla v_{i,n} - A_{i,n}$ in $C_{i,n}$ and
\begin{equation} \label{stima u}
\int_{B_{2 \e_n} (x_{i,n})   } | \nabla u_{i,n}  |^2   \, dx \leq C   \int_{C_{i,n}} | \nabla v_{i,n}  - A_{i,n}  |^2 \, dx   \leq C \int_{C_{i,n}} |\sym{\beta}_n|^2 \, dx	\,.
\end{equation}
Define $\tilde{\beta}_n \colon \Omega \to \matricidue$ by setting
 \begin{equation}\label{defbt}
 \tilde{\beta}_n (x) :=  \begin{cases}
  \beta_n (x)  &  \qquad \text{if} \,\, x \in \Omega_{\e_n} (\mu_n) \,,\\
  	\nabla u_{i,n} (x) + A_{i,n}  &  \qquad \text{if} \,\, x \in B_{\e_n} (x_{i,n}) \,.
  \end{cases}
 \end{equation}
From \eqref{bound sym} and \eqref{stima u}, we have
\begin{equation*}
\begin{aligned}
\int_{\Om}   |\sym{ \tilde{\beta} }_n |^2 \, dx = &\int_{\Omega_{\e_n} (\mu_n)}  |\sym{\beta}_n |^2 \, dx + 
\sum_{i=1}^{M_n}  \int_{B_{\e_n} (x_{i,n})} |  \nabla \sym{u}_{i,n}     |^2 \, dx \\ 
& \leq C   \int_{\Om}   |\sym{\beta}_n |^2 \, dx \leq C N_{\e_n} |\log \e_n| \,.
\end{aligned}
\end{equation*}
Moreover by construction $\Curl \tilde{\beta}_n$ is concentrated on $\de B_{\e_n } (x_{i,n})$ and we have 
 \begin{equation}\label{defbt2}
|\Curl \tilde{\beta}_n | ( \overline B_{\e_n } (x_{i,n}))  = |\mu_n| (\overline B_{\e_n } (x_{i,n}))  \text{ for all } i, \qquad  |\Curl \tilde{\beta}_n | (\Omega ) = |\mu_n| (\Omega).
 \end{equation}
 Therefore we can apply the generalised Korn inequality of Theorem \ref{thm:korn} to get
\[
\begin{aligned}
\int_\Om |\tilde{\beta}_n - \tilde{A}_n|^2 \, dx & \leq C \left(   \int_\Om |\sym{\tilde{\beta}}_n  |^2 \, dx + (|\mu_n| (\Omega))^2 \right)  \\
& \leq C  \left(  N_{\e_n}   |\log \e_n|    + N_{\e_n}^2 \right) \leq C  N_{\e_n}^2 \,, 
\end{aligned} 
\]
where $\tilde{A}_n := \frac{1}{|\Omega|} \int_\Om \sk{\tilde{\beta}}_n \in \matricidue_{\rm skew}$. The last inequality follows from the assumption $ |\log \e_n| \ll N_{\e_n}$.
Now recall that by hypothesis the average of $\beta_n$ is a symmetric matrix and $\beta_n \equiv 0$ in $\Omega \setminus \Omega_{\e_n} (\mu_n)$. Therefore, since symmetric and skew matrices are orthogonal, we have
$|\beta_n - \tilde{A}_n|^2=|\beta_n|^2 + |\tilde{A}_n|^2$, so that
\[
\int_{\Omega_{\e_n} (\mu_n)}  |\beta_n|^2 \, dx \leq \int_{\Omega_{\e_n} (\mu_n)}  |\beta_n - \tilde{A}_n|^2 \, dx \leq
\int_{\Omega}  |\tilde{\beta}_n - \tilde{A}_n|^2 \, dx \leq C N_{\e_n}^2 \,, 
\] 
which yields the desired compactness property for $ \sk{\beta}_n /N_{\e_n}$ in $\strain$.
\medskip

\noindent \textbf{Part 4.} $\mu \in H^{-1} (\Omega;\R^2)$ and $\Curl A = \mu$.
\medskip

\noindent Recall that $\mu_n = \sum_{i=1}^{M_n} \xi_{n,i} \delta_{x_{n,i}} \in \AD_{\e_n} (\Om)$ and $\beta_n \in \AS_{\e_n}(\mu_n)$. Let $\f \in C^1_0(\Omega)$ and $\f_n \in H^1_0 (\Omega)$ be a sequence converging to $\f$ uniformly and
strongly in $H^1_0 (\Omega)$, and such that
\[
\f_n \equiv \f (x_{n,i}) \quad \text{ in } \quad B_{\e_n} (x_{n,i}) \,.
\] 
By Remark \ref{paper3 rem2}, we then have 
\[
\int_{\Om} \f_n \, d\mu_n = \langle \Curl \beta_n , \f_n \rangle = \int_{\Om} \beta_n J \nabla \f_n \,dx \,. 
\]
Hence, by invoking \eqref{super}, \eqref{conv misure} and \eqref{conv strain}, we have
\[
\begin{aligned}
\int_{\Omega} \f \, d\mu & = \lim_{n \to \infty} \frac{1}{N_{\e_n}} \int_{\Omega} \f_n \, d \mu_n	=  
\lim_{n \to \infty} \frac{1}{N_{\e_n}} \langle \Curl \beta_n , \f_n \rangle \\
& = \lim_{n \to \infty} \frac{1}{N_{\e_n}}  \int_{\Omega} \beta_n J \nabla \f_n \, dx = 
\int_{\Omega} A \, J \nabla \f \, dx = \langle \Curl A, \f \rangle \,.
\end{aligned}
\] 
From this we conclude that $\Curl A = \mu$. Moreover, since $A \in L^2(\Om;\matricidue)$, then by definition $\Curl A \in \duale$. Hence also $\mu \in H^{-1}(\Omega;\R^2)$.

\subsection{$\Gamma$-liminf inequality}

We now want to prove the $\Gamma$-liminf inequality of Theorem \ref{thm:gamma}. Let
$\mu_\e \in \AD_\e (\Omega)$, $\beta_\e \in \AS_\e (\mu_\e)$ and
\[
(\mu,S,A) \in (  \misure \, \cap \, \duale ) \times \lduesimm \times \ldueanti \,,
\]
such that $\Curl A = \mu$. 
Assume that $(\mu_\e,\beta_\e)$ converges to $(\mu,S,A)$ in the sense of Definition \ref{def:conv}. We have to show that
\begin{equation} \label{paper 3 liminf proof}
\liminf_{\e \to 0} \mathcal{F}_\e (\mu_\e,\beta_\e)  \geq \mathcal{F} (\mu,S,A)  \,.
\end{equation}
In order to do so, we decompose the energy in
\begin{equation} \label{tesi liminf}
\frac{1}{N_\e |\log \e|} \int_{\Omega} W(\beta_\e) \, dx = 
\frac{1}{N_\e |\log \e|} \int_{\Omega_{\rho_\e} (\mu_\e)} W(\beta_\e) \, dx +
\frac{1}{N_\e |\log \e|} \int_{\Omega \setminus \Omega_{\rho_\e} (\mu_\e)} W(\beta_\e) \, dx 
\end{equation}
and study the two contributions separately. 

Recall that $\mu_\e= \sum_{i=1}^{M_\e} \xi_{\e,i} \delta_{x_{\e,i}}$. Since we are assuming that $\mu_\e /N_\e \weakstar \mu$, this implies that $|\mu_\e|(\Om)/N_\e$ is uniformly bounded, hence
$M_\e \leq C N_\e$ for some uniform constant $C>0$. Moreover $N_\e \rho_\e^2 \to 0$ by hypothesis, therefore $\rchi_{\Omega_{\rho_\e}} \to 1$ in $L^1(\Omega)$, as
\[
\int_{\Om} |\rchi_{\Omega_{\rho_\e}} - 1| \, dx = \sum_{i=1}^{M_\e} |B_{\rho_\e} (x_{\e,i}) | = \pi \rho_\e^2 M_\e \leq C \rho_\e^2 N_\e\,. 
\] 
Since $\sym{\beta}_\e / \sqrt{N_\e |\log \e|} \weak S$, we deduce that 
\[
\frac{\sym{\beta}_\e \rchi_{\Omega_{\rho_\e}}}{ \sqrt{N_\e |\log \e|}} \weak S \quad \text{ weakly in } \quad L^2(\Omega; \matricidue) \,.
\] 
Hence, by weak lower semicontinuity,
\begin{equation} \label{paper3 liminf0}
\begin{aligned}
\liminf_{\e \to 0} \frac{1}{N_\e |\log \e|} 
\int_{\Omega_{\rho_\e} (\mu_\e)} W(\beta_\e) \, dx  & =
\liminf_{\e \to 0} \int_{\Omega} W \left(  \frac{ \sym{\beta}_\e \rchi_{ \Omega_{\rho_\e}}}{ \sqrt{N_\e |\log \e|}}   \right) \, dx \\   
 & \geq \int_\Omega W(S) \, dx \,. 
  \end{aligned}
\end{equation}
Let us consider the second integral in \eqref{tesi liminf}. By Proposition \ref{prop glp} we have
\begin{equation} \label{paper3 liminf1}
\begin{aligned}
\frac{1}{|\log \e|} \int_{\Omega \setminus \Omega_{\rho_\e} (\mu_\e)} W (\beta_\e) \, dx & = \sum_{i=1}^{M_\e}  \frac{1}{|\log \e|} \int_{B_{\rho_\e} (x_{\e,i})} W (\beta_\e) \, dx \\ 
& \geq   \sum_{i=1}^{M_\e} \bar{\psi}_\e (\xi_{\e,i})   
  =(1 + o (\e)) \sum_{i=1}^{M_\e} \psi (\xi_{\e,i}) \,,
\end{aligned}
\end{equation}
where $o(\e) \to 0$ as $\e \to 0$. By the properties of $\f$ (Proposition \ref{paper3 properties density}) and by Reshetnyak's lower semicontinuity Theorem (\cite[Theorem 2.38]{afp}),
\begin{equation} \label{paper3 liminf2}
\begin{aligned}
\liminf_{\e \to 0} \frac{1}{N_\e} \sum_{i=1}^{M_\e} \psi (\xi_{\e,i}) 
& \geq \liminf_{\e \to 0} \frac{1}{N_\e} \sum_{i=1}^{M_\e} \f (\xi_{\e,i}) \\
& = \liminf_{\e \to 0} \frac{1}{N_\e} \int_\Omega \f \left(\frac{d \mu_\e}{d | \mu_\e| }   \right)  \, d |\mu_\e |   
  \geq  \int_\Omega \f \left(\frac{d \mu}{d | \mu| }   \right) \, d |\mu | \, .   
\end{aligned}
\end{equation}
By \eqref{paper3 liminf1}-\eqref{paper3 liminf2} we get
\[
\liminf_{\e \to 0} \frac{1}{N_\e |\log \e|} \int_{\Omega \setminus \Omega_{\rho_\e} (\mu_\e)}  W(\beta_\e) \, dx   \geq  \int_\Omega \f \left(\frac{d \mu}{d | \mu| }   \right) \, d |\mu | \,,
\]
that together with \eqref{paper3 liminf0} yields the $\Gamma$-liminf inequality \eqref{paper 3 liminf proof}.

\subsection{$\Gamma$-limsup inequality}

In this section we prove the $\Gamma$-limsup inequality of Theorem \ref{thm:gamma}.
Before proceeding, we need the following technical lemma to construct the recovery sequence for the measure $\mu$. Let us first introduce some notation. For a sequence of atomic vector valued measures of the form $\nu_\e := \sum_{i=1}^{M_\e} \alpha_{\e,i} \delta_{x_{\e,i}}$ and a sequence $r_\e \to 0$, we define the corresponding diffused measures
\begin{equation} \label{mis diffuse}
\tilde{\nu}^{r_\e}_\e := \frac{1}{\pi r_\e^2} \sum_{i=1}^{M_\e} \alpha_{\e, i} \, \mathcal{H}^2 \zak  B_{r_\e} (x_{\e,i}) 
  \,,  \quad 
\hat{\nu}^{r_\e}_\e := \frac{1}{2 \pi r_\e} \sum_{i=1}^{M_\e} \alpha_{\e, i} \, \mathcal{H}^1 \zak \de B_{r_\e} (x_{\e, i}) \,. 
\end{equation}
For $x_{\e,i} \in \supp \nu_\e$, define the functions $\tilde{K}_{\e,i}^{\alpha_{\e,i}}, \, \hat{K}_{\e,i}^{\alpha_{\e,i}} \colon B_{r_\e} (x_{\e,i}) \to \matricidue$ as
\begin{equation} \label{K diffusi}
\tilde{K}_{\e,i}^{\alpha_{\e,i}} (x) := \frac{1}{2 \pi r_\e^2} \, \alpha_{\e,i} \otimes J (x-x_{\e,i}) \,, \quad
\hat{K}_{\e,i}^{\alpha_{\e,i}} (x) := \frac{1}{2 \pi } \, \alpha_{\e,i} \otimes J \frac{x-x_{\e,i}}{|x-x_{\e,i}|^2} \,,
\end{equation}
where $J$ is the counter-clockwise rotation of $\pi/2$. 
Finally define $\tilde{K}_\e^{\nu_\e} , \, \hat{K}_\e^{\nu_\e} \colon \Omega \to \matricidue$ as
\begin{equation} \label{K diffusi bis}
\tilde{K}_\e^{\nu_\e}:= \sum_{i=1}^{M_\e} \tilde{K}_{\e,i}^{\alpha_{\e,i}} \rchi_{B_{r_\e} (x_{\e,i})} \, , \quad  	
\hat{K}_\e^{\nu_\e}:= \sum_{i=1}^{M_\e} \hat{K}_{\e,i}^{\alpha_{\e,i}} \rchi_{B_{r_\e} (x_{\e,i})} \,.
\end{equation}
It is easy to show that
\begin{equation} \label{curl misure}
\Curl \tilde{K}_\e^{\nu_\e} = \tilde{\nu}_\e^{r_\e} - \hat{\nu}_\e^{r_\e} \,, \quad 
\Curl \hat{K}_\e^{\nu_\e} = \nu_\e - \hat{\nu}_\e^{r_\e}	\,.
\end{equation}

The following easy lemma will be used in some density argument in the construction of the recovery sequence.
\begin{lemma} \label{lemma mis0}
Let $n\in\N$, and set 
\begin{equation}\label{spexi}
S_n:= \left\{\xi := \sum_{k=1}^M \lambda_k \xi_k \text{ with } M\in \N, \, \xi_k \in \mathbb{S}, \lambda_k>0 \text{ such that } z_j:=\frac{n^2 \lambda_j}{\sum \lambda_k} \in\N  \text{ for all } j \right\}.
\end{equation}
The union of such sets is dense in $\R^2$.
\end{lemma}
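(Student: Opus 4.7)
The plan is very short because the lemma admits an essentially immediate proof once one notices that the single-summand case $M=1$ already makes the integrality constraint vacuous. Specifically, for any $\sigma \in \mathbb{S}$ and any $t>0$, setting $M=1$, $\xi_1 = \sigma$ and $\lambda_1 = t$ gives $\sum_k \lambda_k = t$ and $z_1 = n^2 t / t = n^2 \in \N$ for every $n \in \N$, so $t\sigma \in S_n$. Thus $S_n \supseteq \R_{>0} \cdot \mathbb{S}$ for every $n$, and the lemma reduces to proving that $\R_{>0} \cdot \mathbb{S}$ is dense in $\R^2$.

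For this density step I would invoke the standing assumption that $\mathcal{S}$ contains two linearly independent vectors $b_1, b_2$. Then the lattice $\Lambda := \Z b_1 + \Z b_2$ is a full-rank sublattice of $\mathbb{S}$, and the standard density-of-rational-slopes argument shows that the set of directions $\{\sigma/|\sigma| : \sigma \in \Lambda \setminus \{0\}\}$ is dense in $S^1$. Given $\xi \in \R^2 \setminus \{0\}$ and $\delta>0$, I would pick $\sigma \in \Lambda$ with direction arbitrarily close to $\xi/|\xi|$ and then set $t := |\xi|/|\sigma| > 0$, so that $|t\sigma - \xi| < \delta$; the point $\xi = 0$ is trivially approximated by $t\sigma$ with $t\to 0^+$. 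Combining the two observations yields that each individual $S_n$, and hence \emph{a fortiori} the union $\bigcup_n S_n$, is dense in $\R^2$.

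The main ``obstacle'' is really a non-obstacle: one should resist the temptation to exploit the elaborate structure of the definition (decomposing $\xi$ into exactly $n^2$ weighted summands of fixed integer multiplicities $z_j$ summing to $n^2$) and instead observe that the trivial $M=1$ case already suffices. This reduces what looks like a combinatorial statement about rational partitions in $S_n$ to the elementary fact that positive rays through a full-rank lattice fill $\R^2$ densely.
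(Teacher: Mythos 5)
Your proof is correct for the lemma as stated, and the paper itself supplies no argument for it (it is merely flagged as ``easy''), so there is no written proof to compare against. The $M=1$ observation is valid: taking $\xi_1=\sigma\in\mathbb{S}$, $\lambda_1=t>0$ gives $z_1=n^2\in\N$ automatically, so $\R_{>0}\mathbb{S}\subseteq S_n$ for every $n$; the density of rational directions of the full-rank sublattice $\Z b_1+\Z b_2\subseteq\mathbb{S}$ in $S^1$ then finishes the job. You in fact prove more than is asked: each individual $S_n$ is already dense.

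One caveat, however, on the remark in your last paragraph that the ``elaborate structure'' of \eqref{spexi} is a non-obstacle. In the downstream use of the lemma (Step 1 of the $\Gamma$-$\limsup$ proof, via Step 2 of Lemma~\ref{lemma mis}), membership ``$\xi\in S_n$'' is exploited not merely as a property of $\xi$ as a point but through the \emph{witnessing decomposition} $\xi=\sum_{k=1}^M\lambda_k\xi_k$ with integers $z_k$ summing to $n^2$: the recovery construction plants exactly $z_k$ dislocations of Burgers vector $\xi_k$ in each $2nr_\e$-cell, and that decomposition is moreover required to be (close to) one achieving $\f(\xi)=\sum_k\lambda_k\psi(\xi_k)$. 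Your $M=1$ witness $\xi=t\sigma$ is generically far from optimal for $\f$, since in general $\f(\sigma)<\psi(\sigma)$. The argument the paper presumably has in mind is: keep the vectors $\xi_k$ from a minimising decomposition, replace the weight ratios $\lambda_j/\sum_k\lambda_k$ by nearby rationals $z_j/n^2$ with $z_j\in\N$, $\sum_j z_j=n^2$ (such points are dense in the open simplex), and set $\xi':=\sum_k\lambda'_k\xi_k$; then $\xi'\to\xi$, $\sum_k\lambda'_k\psi(\xi_k)\to\f(\xi)$, and $\xi'\in S_n$ via a near-optimal decomposition. Your shortcut proves the literal density statement correctly, but it would not serve as a drop-in replacement for this reduction, so the more elaborate structure is not actually dispensable.
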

\begin{lemma} \label{lemma mis}
Let conditions (i), (ii) and (iii) of Section \ref{paper3 setting} hold true. Then we have:
\begin{itemize}
\item[(A)]
Let $n\in\N$, $\xi\in S_n$ defined as in \eqref{spexi} and let $\mu:= \xi \, dx$. Set $\Lambda:= \sum_{k=1}^M \lambda_k$,
$r_\e := \frac{1}{ 2  \sqrt{\Lambda N_\e}}$.
Then,  there exists a sequence  $\eta_\e = \sum_{k=1}^M \xi_k \eta_\e^k$, with $\eta_\e^k = \sum_{l=1}^{M_\e^k} \delta_{x_{\e,l}}$, such that $\eta_\e \in \AD_\e(\Om)$, and
\begin{gather}
\frac{|\eta_\e^k|}{N_\e} \weakstar \lambda_k \, dx \quad \text{in} \quad \mathcal{M}(\Om;\R) \,, \qquad \frac{\eta_\e}{N_\e} \weakstar \mu \quad \text{in} \quad \misure \,,   
\label{paper3 conv1}	 \\
 \left \| \frac{\tilde{\eta}^{r_\e}_\e}{N_\e} - \mu \right\|_{H^{-1}(\Om;\R^2)} \le \frac{n C}{ \sqrt  {N_\e} },
\label{conv2}
\end{gather}
for some constant $C$ independent of $n$, where the measure $\tilde{\eta}^{r_\e}_\e$ is defined according to \eqref{mis diffuse}.
\item[(B)]
  Let $\mu$, $r_\e$ as in $(A)$,   let $g \in C^0 (\overline \Om;  \R^2)$  and set $\sigma:= g(x) \, dx$. 
  Then, there exists a sequence $\eta_\e$ satisfying all the properties in $(A)$ and a sequence 
    $\sigma_\e = \sum_{l=1}^{H_\e} \zeta_{\e,l} \delta_{y_{\e,l}}$, with $\zeta_{\e,l} \in \mathbb S$, such that 
      supp$(\sigma_\e)\cap$ supp$(\eta_\e) = \emptyset$,  $\eta_\e + \sigma_\e \in \AD_\e(\Om)$
and  
\begin{gather}
\label{conv3}
\frac{\sigma_\e}{\sqrt{N_\e |\log \e|}} \weakstar \sigma \quad \text{in} \quad \misure \,, \qquad 
\frac{\tilde \sigma_\e}{\sqrt{N_\e |\log \e|}} \to \sigma  \quad \text{in} \quad \duale,
\end{gather}
where the measures $\tilde{\sigma}^{r_\e}_\e$ are defined according to \eqref{mis diffuse}.
\end{itemize}
In particular there exists a constant $C>0$ such that
\begin{equation} \label{bound masse}
 H_\e \leq C \sqrt{N_\e |\log \e|} \,, \qquad M_\e \leq C N_\e   \,,	
\end{equation}
where $M_\e:=\sum_{k=1}^M M_\e^k$.
\end{lemma}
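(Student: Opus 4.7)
The plan is to place the dislocations on a regular mesoscopic grid tuned so that the mass on each cell matches $\mu$ exactly. For (A), I would take a disjoint family of open squares $\{Q_i\}$ of side $\ell_\e := 2nr_\e = n/\sqrt{\Lambda N_\e}$ contained in $\Omega$ whose union $\Omega'$ fills $\Omega$ up to a boundary strip $S_\e := \Omega\setminus\Omega'$ of width $O(\ell_\e)$, and inside each $Q_i$ arrange $n^2$ points on a regular $n\times n$ subgrid of spacing $2r_\e$. The hypothesis $N_\e\rho_\e^2\to 0$ ensures $\rho_\e\ll 2r_\e$, so the spacing exceeds $2\rho_\e$ and each $B_{\rho_\e}$ fits in $\Omega$ for $\e$ small. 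Since $\sum_k z_k = n^2$, I can label the $n^2$ points of each cube so that exactly $z_k$ points receive the label $k$; setting $\eta_\e^k$ equal to the sum of Diracs at the points with label $k$ and $\eta_\e := \sum_k \xi_k \eta_\e^k$ gives a measure in $\AD_\e(\Omega)$.

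With this construction the cube-by-cube equality $|\eta_\e^k|(Q_i)/N_\e = z_k/N_\e = \lambda_k|Q_i|$ holds, from which the weak-$*$ convergences in \eqref{paper3 conv1} follow by the standard cube-average approximation of continuous test functions; the count $M_\e\le CN_\e$ is immediate from the number of cubes being $O(\ell_\e^{-2})$.

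The technical core is the $H^{-1}$ estimate \eqref{conv2}. Since the balls $B_{r_\e}(x_{\e,l})$ are pairwise disjoint and $1/(\pi r_\e^2 N_\e) = 4\Lambda/\pi$, the diffused measure $\tilde\eta^{r_\e}_\e/N_\e$ has an $L^\infty$ density bounded independently of $\e$ and $n$; together with the matching integral on each cube, $g_\e := \tilde\eta^{r_\e}_\e/N_\e - \mu$ is uniformly bounded in $L^\infty$ and has zero mean componentwise on every $Q_i$. On each cube I solve the Neumann problem $\Delta u^i_\e = g^i_\e$ in $Q_i$ with $\partial_\nu u^i_\e = 0$ on $\partial Q_i$, and let $F_\e$ equal $\nabla u^i_\e$ on $Q_i$ and $0$ outside $\Omega'$; the matching vanishing normal traces imply $\Divv F_\e = g_\e$ on $\Omega'$, and the Neumann--Poincaré estimate $\|\nabla u^i_\e\|_{L^2(Q_i)}\le C\ell_\e\|g^i_\e\|_{L^2(Q_i)}$ summed over cubes gives $\|F_\e\|_{L^2(\Omega)}\le C\ell_\e$. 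Testing against $\varphi\in H^1_0(\Omega;\R^2)$ and splitting $\int_\Omega g_\e\cdot\varphi = \int_{\Omega'} g_\e\cdot\varphi + \int_{S_\e}g_\e\cdot\varphi$, integration by parts against $F_\e$ controls the first term by $\|F_\e\|_{L^2}\|\nabla\varphi\|_{L^2}\le C\ell_\e\|\nabla\varphi\|_{L^2}$, while the second is the pairing of the bounded density $\xi$ on the boundary strip with $\varphi$; combining Hardy's inequality in $\Omega$ (yielding $\|\varphi\|_{L^2(S_\e)}\le C\ell_\e\|\nabla\varphi\|_{L^2}$) with Cauchy--Schwarz and $|S_\e|=O(\ell_\e)$ gives the lower-order bound $C\ell_\e^{3/2}\|\nabla\varphi\|_{L^2}$. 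Combining the two terms yields $\|g_\e\|_{H^{-1}(\Omega;\R^2)}\le C\ell_\e = Cn/\sqrt{N_\e}$ with $C$ independent of $n$.

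For (B), I would introduce a coarser scale $L_\e\to 0$ with $\ell_\e\ll L_\e$, and partition $\Omega$ into squares $P_j$ of side $L_\e$. On each $P_j$, approximate $g$ by its centre value $g(y_j)$ and pick an integer combination of vectors of $\mathcal{S}$ whose sum is within a uniformly bounded error of $g(y_j)|P_j|\sqrt{N_\e|\log\e|}$ in $\R^2$ (possible because $\Span_\R\mathcal{S}=\R^2$); place the corresponding Diracs on a sub-grid of $P_j$ avoiding $\supp(\eta_\e)$ and with spacing $\gg\rho_\e$. Since $\sqrt{N_\e|\log\e|}\ll N_\e$, the $\eta_\e$-grid leaves ample room and disjointness is straightforward, giving $H_\e\le C\sqrt{N_\e|\log\e|}$ and the weak-$*$ convergence in \eqref{conv3} as in (A). The strong $H^{-1}$ convergence combines the cube-level Neumann/Poincaré argument above (yielding an error of order $L_\e$) with the uniform continuity of $g$ on $\overline\Omega$ (controlling the piecewise-constant approximation error $g - \sum_j g(y_j)\rchi_{P_j}$ by the modulus of continuity of $g$ at scale $L_\e$); both vanish as $\e\to 0$. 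The main technical subtlety throughout is maintaining the joint admissibility of $\eta_\e$ and $\sigma_\e$ near $\partial\Omega$ and between the two sub-grids; the hypotheses on $\rho_\e$ and $N_\e$ from Section \ref{paper3 setting} provide exactly the slack required.
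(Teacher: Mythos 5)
Your construction is essentially the one the paper uses: a periodic grid of Diracs at scale $r_\e$, with exactly $z_k$ masses of weight $\xi_k$ per mesoscopic cell of side $\ell_\e=2nr_\e$, so that the diffused measure matches $\mu$ exactly cell-by-cell; the verification that $|\eta_\e^k|(Q_i)/N_\e=\lambda_k|Q_i|$ and the $L^\infty$ bound on the density are correct, as is the count $M_\e\le CN_\e$. Where you diverge from the paper is in the proof of the $H^{-1}$ estimate \eqref{conv2} and in the level of detail for part (B). The paper takes a $2nr_\e$-periodic solution $v_\e$ of $\Delta v_\e=\tilde\nu_\e^{r_\e}/N_\e-\mu$ on all of $\R^2$, estimates $\|v_\e\|_{H^1(\Om)}\le Cnr_\e$ and then corrects for the boundary mismatch between $\tilde\eta^{r_\e}_\e$ and $\tilde\nu^{r_\e}_\e$; you instead solve a compatible Neumann problem on each cell, exploit the vanishing normal fluxes to glue the gradients into an $L^2$ vector field $F_\e$ of norm $O(\ell_\e)$ with $\Divv F_\e=g_\e$ on the filled region, and treat the residual boundary strip by Hardy's inequality. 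Both give $\|g_\e\|_{H^{-1}}\le Cn/\sqrt{N_\e}$ with $C$ independent of $n$, but your version avoids any appeal to periodic Sobolev theory and makes explicit why the boundary strip is harmless, which the paper leaves implicit. The other genuine difference is geometric: the paper keeps three of the four $nr_\e$-subsquares in each $2nr_\e$-square entirely free of $\eta_\e$-masses and places the $\sigma_\e$-Diracs of part (B) there, while you fill each $\ell_\e$-cell with the full $n\times n$ subgrid and interleave the $\sigma_\e$-Diracs in the $O(r_\e)$-gaps, using $\rho_\e\ll r_\e$ and a coarser partition at scale $L_\e$ to match the continuous density $g$. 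Your two-scale version of (B) is in fact more explicit than the paper's one-line ``clearly'' and supplies the quantitative reasoning that underlies it; the interleaving works, but the paper's reserved-quadrant scheme is a cleaner way to guarantee $\supp\sigma_\e\cap\supp\eta_\e=\emptyset$ without worrying about packing inside the gaps.
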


\begin{proof}

\medskip
\noindent \textbf{Step 1.}  {\it Proof of (A), the case $M=1$ and $\mu=\xi \, dx$ with $\xi \in \mathbb{S}$}.
\medskip

\begin{figure}[t!] 
\centering   
\def\svgwidth{8.5cm}   
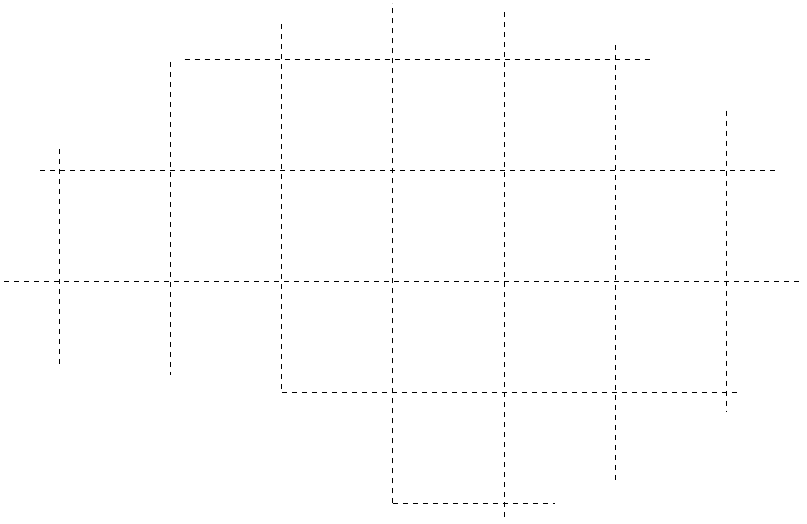  
\caption{Approximating $\mu = \xi \, dx$ with the $2r_\e$-periodic atomic measure $\eta_\e$. The red dots represent Dirac masses $\xi \delta_{x_{\e,i}}$ in the support of $\eta_\e$.}   
\label{fig recovery}
\end{figure} 

\noindent We cover $\R^2$ with squares of side length $2 r_\e$. Divide  each of them  in   four squares of side length $r_\e$,  and plug a mass $\xi \,\delta_{x_{\e,i}}$ at the centre of  one  of such $r_\e$-squares, obtaining  in this way a measure $\nu_\e$ on $\R^2$ which is $2 r_\e$ periodic. We notice that we leave some free space just in order to accomplish also point $(B)$.   
 Then we define $\eta_\e$ as the restriction of $\nu_\e$ on all the $2 r_\e$-squares contained in $\Om$ (see Figure \ref{fig recovery}). Notice that 
 $\eta_\e \in \AD_\e (\Om)$ since $r_\e \gg 2 \rho_\e$. 
Also, the density of $\frac{1}{N_\e} \tilde{\eta}^{r_\e}_\e - \mu$ has zero average on each $2 r_\e$-square, so that it  converges to zero weakly in $L^2(\Om;\R^2)$ and \eqref{paper3 conv1} is verified.

Let $v_\e:\R^2\to \R^2$ be the $2 r_\e$-periodic solution to  $\Delta v_\e = \frac{1}{N_\e} \tilde\nu_\e^{r_\e} - \mu$.
By construction it is easy to see  that
\begin{equation}\label{permenouno}
\left\|  \frac{1}{N_\e} \tilde\nu_\e^{r_\e} - \mu \right \|_{H^{-1}(\Om)} \le  \|v_\e\|_{H^1(\Om;\R^2)} \le C r_\e, \qquad \left\| \frac{1}{N_\e} \tilde\eta_\e^{r_\e} - \frac{1}{N_\e} \tilde\nu_\e^{r_\e} \right \|_{H^{-1}(\Om;\R^2)} \le C r_\e.
\end{equation}
These last estimates  clearly imply  \eqref{conv2}.

\medskip
\noindent \textbf{Step 2.} {\it Proof of $(A)$, the general case $\xi\in S_n$}.
\medskip

\noindent  Cover $\R^2$ with squares of side length $2 n r_\e$, and divide  each of them  in   four squares of side length $n r_\e$. 
As in Step 1, pick one  of these $n r_\e$-squares  in all $2n r_\e$-squares in a periodic manner. Finally, divide 
  each of these selected $n r_\e$-squares in $n^2$ squares of side length $ r_\e$. Now, plug at the centres of each of these 
  $n^2$ squares a mass $\xi_k \,\delta_{x_{\e,i}}$ with $1\le k\le M$, in such a way that the resulting measure $\nu_\e$ is $2n r_\e$-periodic, and on each 
  $2n r_\e$-square there are exactly $z_k$ masses with weight $\xi_k$, where $z_k$ is defined in \eqref{spexi}.
Then, defining $\eta_\e$ as the restriction of $\nu_\e$ on the union of all $2n r_\e$-squares contained in $\Om$, and arguing as in the proof of Step 1, we have that \eqref{paper3 conv1} holds true, while \eqref{permenouno} holds true with $C$ replaced by $n C$, so that \eqref{conv2} follows.

\medskip
\noindent \textbf{Step 3.} {\it Proof of $(B)$}.
\medskip

\noindent We have at disposal $C N_\e$ squares of side length $ n r_\e$, left free from the constructions in Step 2. Clearly, we can plug masses with weights in $\mathbb S$ at the centre of $c \sqrt{N_\e |\log \e|}$ of such free squares, in such a way that \eqref{conv3} holds true.  
%
%
%
\end{proof}

We are now ready to prove the $\Gamma$-limsup inequality of Theorem \ref{thm:gamma}.

\begin{proof}[Proof of $\Gamma$-limsup inequality of Theorem \ref{thm:gamma}] 

Let
\[
(\mu,S,A) \in (\misure \, \cap \, \duale) \times \lduesimm \times \ldueanti \,,
\]
with $\Curl A = \mu$. We will construct a recovery sequence in three steps. 
\medskip

\noindent \textbf{Step 1.} The case $\mu = \xi \, dx$ with  $S \in C^1 (\overline \Omega;\matricidue_{\text{sym}}$).
\medskip

\noindent In this step we assume that $\mu := \xi \, dx$, $A \in \ldueanti$ with $\Curl A = \mu$
 and  $S \in C^1 (\overline\Omega;\matricidue_{\text{sym}})$. We will construct a recovery sequence $\mu_\e \in \AD_\e (\Omega)$, $\beta_\e \in \AS_\e (\mu_\e)$, such that $(\mu_\e,\beta_\e)$ converges to $(\mu,S,A)$ in the sense of Definition \ref{def:conv} and 
\begin{equation} \label{thesis}
\limsup_{\e \to 0} \frac{1}{N_\e |\log \e|} \int_{\Omega}  W(\beta_\e) \, dx \leq \int_{\Omega} ( W(S) + \f (\xi) ) \, dx \,. 	
\end{equation}

By Proposition \ref{paper3 properties density}, there exist $\lambda_k \geq 0$, $\xi_k \in \mathbb{S}$, $M \in \N$, such that $\xi = \sum_{k=1}^M \lambda_k \xi_k$ and
\begin{equation} \label{min self}
\varphi(\xi) = \sum_{k=1}^M \lambda_k \psi (\xi_k) \,,	
\end{equation}
where $\varphi$ is the self-energy defined in \eqref{self}.
 By standard density arguments in $\Gamma$-convergence, we will assume without loss of generality that $\xi \in S_n$ is as in \eqref{spexi}  for some $n\in\N$.
  
Set $\sigma:=\Curl S$. Since $S \in C^1 (\overline \Omega; \matricidue_{\text{sym}})$, then $\sigma= g(x) \, dx$ for some continuous function $g \colon \overline \Omega \to \R^2$. 
Let $\eta_\e:= \sum_{i=1}^{M_\e} \xi_{\e,i} \delta_{x_{\e,i}}$, $\sigma_\e := \sum_{i=1}^{H_\e} \zeta_{\e,i} \delta_{y_{\e,i}}$ and $r_\e:=C/\sqrt{N_\e}$ be the sequences given by Lemma \ref{lemma mis} (B). 
Set $\mu_\e:= \eta_\e + \sigma_\e$. By \eqref{paper3 conv1}, \eqref{conv3} and the hypothesis $N_\e \gg |\log \e|$, $\mu_\e$ is a recovery sequence for $\mu$.

Let $\tilde{\eta}_\e^{r_\e}, \hat{\eta}_\e^{r_\e}, \tilde{\sigma}_\e^{r_\e},\hat{\sigma}_\e^{r_\e}$ be defined according to \eqref{mis diffuse}. 
Notice that $\hat{K}_{\e,i}^{\xi_{\e,i}} \in \AS_{\e, \rho_\e} (\xi_{\e,i})$ and it satisfies \eqref{crescita beta}. Therefore, by Proposition \ref{prop glp}, there exist strains $\hat{A}_{\e,i}$ such that
\begin{enumerate}[(i)]
\item 	$\hat{A}_{\e,i} \in \AS_{\e, \rho_\e} (\xi_{\e,i})$,
\item   $\hat{A}_{\e,i} \cdot t = \hat{K}_{\e,i}^{\xi_{\e,i}} \cdot t$ on $\de B_\e (x_{\e,i}) \cup \de B_{\rho_\e (x_{\e,i})}$, 
\end{enumerate}
and
\begin{equation}
\frac{1}{|\log \e|} \int_{B_{\rho_\e} (x_{\e,i}) \setminus B_{\e} (x_{\e,i})} W( \hat{A}_{\e,i} ) \, dx = \psi(\xi_{\e,i})(1+ o (\e)) \label{A eps}.  	
\end{equation}
Now extend $\hat{A}_{\e,i}$ to be $\hat{K}_{\e,i}^{\xi_{\e,i}}$ in 
$B_{r_\e} (x_{\e,i}) \setminus B_{\rho_\e} (x_{\e,i})$ and zero in $\Omega \setminus ( B_{r_\e} (x_{\e,i}) \setminus B_{\e} (x_{\e,i})   )$. 
Set
\begin{equation} \label{nuclei}
\hat{S}_\e := \sum_{l=1}^{H_\e} \hat{K}^{\zeta_{\e,i}}_\e \, \rchi_{B_{r_\e} (y_{\e,i}) \setminus B_{\e} (y_{\e,i})} \,, \quad	\hat{A}_\e := \sum_{i=1}^{M_\e} \hat{A}_{\e,i} \,.
\end{equation}
Hence, recalling definition \eqref{mis diffuse} we have
\begin{equation} \label{nuclei curl} 
\Curl \hat{S}_{\e} = - \hat{\sigma}^{r_\e}_\e  + \hat{\sigma}^{\e}_\e \,,  \quad    
\Curl \hat{A}_{\e}= - \hat{\eta}^{r_\e}_\e + \hat{\eta}^{\e}_\e \,. 
\end{equation}

Define $Q_\e := J \, \nabla u_\e $,  $R_\e := J \, \nabla v_\e $ where $u_\e, \, v_\e$ solve 
\begin{equation} \label{laplace 1}
	\begin{cases}
	 \Delta u_\e =  	 \tilde{\sigma}_\e^{r_\e} - \sqrt{N_\e | \log \e |} \, \sigma \quad \text{ in } \Om \\
	\frac{u_\e}{\partial \nu} = C_{u,\e} \qquad \text{ on } \partial \Om;
	\end{cases}
	,
	\qquad \begin{cases}
	 \Delta v_\e =  	\tilde{\eta}_\e^{r_\e}  - N_\e  \mu \quad \text{ in } \Om \\
  \frac{v_\e}{\partial \nu} = C_{v,\e} \qquad \text{ on } \partial \Om\, ,
	\end{cases}
\end{equation}
where the constants $C_{u,\e}, \, C_{v,\e}$ are satisfy the compatibility condition
$$
\int_{\partial \Om} C_{u,\e} \, ds = \int_\Om \tilde{\sigma}_\e^{r_\e} - \sqrt{N_\e | \log \e |} \, dx, 
\qquad 
\int_{\partial \Om} C_{v,\e} \, ds = \int_\Om \tilde{\eta}_\e^{r_\e}  - N_\e  \mu \, dx.
$$
In this way,
\begin{equation} \label{curl Q}
\Curl Q_\e = \tilde{\sigma}_\e^{r_\e} - \sqrt{N_\e | \log \e |} \, \sigma, \qquad \Curl R_\e =\tilde{\eta}_\e^{r_\e}  - N_\e  \mu  \,.
\end{equation}
Notice that by construction $\frac{1}{\sqrt{N_\e|\log\e|}} (|C_{u,\e}| + | C_{v,\e}| \to 0$ as $\e\to 0$. 
Therefore, using also  \eqref{conv2}, \eqref{conv3} and standard elliptic estimates, we have
\begin{equation} \label{stima Q}
\frac{Q_\e}{ \sqrt{N_\e |\log \e|}}    \to 0, \quad \frac{R_\e}{ \sqrt{N_\e |\log \e|}}    \to 0   \qquad \text{ in } \quad L^2 (\Omega;\matricidue )\,. 	
\end{equation}
Also notice that
\begin{equation} \label{de om}
\frac{Q_\e + R_\e}{\sqrt{N_\e |\log \e|}} \cdot t \to 0    \quad \text{in } \,\, H^{-1/2}(\de \Om;\R^2) \cap L^1(\de \Om;\R^2) \,.	
\end{equation}

We can now define the candidate recovery sequence as
\begin{equation} \label{rec seq strain}
\mu_\e = \eta_\e + \sigma_\e,    \qquad \beta_\e := ( S_\e   +   A_\e ) \, \rchi_{\Omega_\e (\mu_\e)} \,,
\end{equation}
where
\begin{gather}\label{seq1}
S_\e := \sqrt{N_\e \va{ \log \e }} \, S +    \hat{S}_{\e} -  \tilde{K}_{\e}^{\sigma_\e} + Q_\e \,,   \\
\label{seq2}
A_\e := N_\e A +  \hat{A}_{\e} -  \tilde{K}_{\e}^{\eta_\e} + R_\e \,.     
\end{gather}
By definition and \eqref{curl misure}, \eqref{nuclei curl}, \eqref{curl Q}, it is immediate to check that
\begin{equation*}
\Curl S_\e =  \hat{\sigma}_{\e}^{\e} \,,\quad \Curl A_\e = \hat{\eta}^{\e}_{\e} \quad \text{ in } \,\, \Om \,.
\end{equation*}
Recalling that  $\mu_\e = \eta_\e + \sigma_\e$, we deduce that
\begin{equation*}
\Curl \beta_\e = \hat{\eta}^{\e}_\e  + \hat{\sigma}^{\e}_\e = \hat{\mu}^{\e}_\e \quad \text{ in } \,\, \Om \,, \qquad 
\Curl \beta_\e \zak \Omega_\e (\mu_\e) =0 .
\end{equation*}
Moreover,  the circulation condition $\int_{\de B_\e (x)} \beta_\e \cdot t \, ds = \mu_\e(x)$ is satisfied for every point $x \in \spt \mu_\e$. Hence $\beta_\e \in \AS_\e (\mu_\e)$. 

In order for $(\mu_\e,\beta_\e)$ to be the desired recovery sequence, we need to prove that 
\begin{gather}
\frac{\sym{\beta}_\e}{\sqrt{ N_\e |\log \e |    }} \weak   S   \qquad \text{weakly in} \quad \strain \,, \label{conv sym} \\
\frac{\sk{\beta}_\e}{N_\e } \weak   A   \qquad \text{weakly in} \quad \strain \,, \label{conv skew} \\
\lim_{\e \to 0} \frac{1}{N_\e |\log \e|} \int_{\Omega}  W(\beta_\e) \, dx = \int_{\Omega} ( W(S) + \f (\xi) ) \, dx \,.  \label{lim}
\end{gather}
In view of \eqref{stima Q}-\eqref{seq2}, in order to prove  \eqref{conv sym}, \eqref{conv skew} we have to show that
%
\begin{gather}
\frac{\hat{A}_\e }{\sqrt{N_\e \va{\log \e}}}  \weak 0 \quad \text{ in } \quad L^2 (\Omega;\matricidue) \,, \label{th uno} 
\\
\frac{\hat{S}_\e }{\sqrt{N_\e \va{\log \e}}}, \,  \frac{\tilde{K}^{\sigma_\e}_\e }{\sqrt{N_\e \va{\log \e}}}, \, \frac{\tilde{K}_\e^{\eta_\e} }{\sqrt{N_\e \va{\log \e}}} \to 0 \quad \text{ in } \quad L^2 (\Omega;\matricidue) \, \label{th due} \,.
\end{gather}
We have
\begin{equation} \label{th uno prova1}
\begin{aligned}
\int_{\Omega_{\rho_\e} (\mu_\e)}  \frac{ |\hat{A}_\e|^2  }{N_\e |\log \e|   } \, dx  & =  \frac{1}{N_\e |\log \e|} \sum_{i=1}^{M_\e} 
\int_{ B_{r_\e}(x_{\e,i})  \setminus B_{\rho_\e}(x_{\e,i})    }  | \hat{K}^{\xi_{\e,i}}_{\e,i}    |^2 \, dx \\
& \leq \frac{C}{N_\e |\log \e|} \sum_{i=1}^{M_\e} 
\int_{ B_{r_\e}(x_{\e,i})  \setminus B_{\rho_\e}(x_{\e,i})    }  | x - x_{\e,i}    |^{-2} \, dx \\ 
& \leq C \, \frac{M_\e (\log r_\e - \log \rho_\e )}{N_\e |\log \e|} \leq C \, \frac{ \log r_\e - \log \rho_\e }{ |\log \e|} \to 0 \,, 
\end{aligned}
\end{equation}
as $\e \to 0$, where the last inequality follows from \eqref{bound masse}. Moreover, by \eqref{th uno prova1}, \eqref{paper3 conv1}, \eqref{A eps}, \eqref{min self}, and the definition of $\mu_\e^k$ given by Lemma \ref{lemma mis}, we have
\begin{equation} \label{th uno prova2}
\begin{aligned}
\lim_{\e \to 0} \, & \frac{1}{N_\e |\log \e|} \int_\Omega W(\hat{A}_\e) \, dx  = \lim_{\e \to 0} \frac{1}{N_\e |\log \e|} \int_{\Omega \setminus \Omega_{\rho_\e} (\mu_\e)} W(\hat{A}_\e) \, dx \\
& = \lim_{\e \to 0} \frac{1}{N_\e} \sum_{i=1}^{M_\e} \psi(\xi_{\e,i}) (1+o(\e)) =
\lim_{\e \to 0} \frac{1}{N_\e}  \sum_{k=1}^{M}  |\eta_\e^k|(\Omega) \, \psi(\xi_k) (1+o(\e)) \\
& = |\Omega| \sum_{k=1}^M \lambda_k \psi (\xi_k) = \int_\Omega \f (\xi) \, dx \,.  
\end{aligned} 
\end{equation}
From \eqref{coercivity}, \eqref{th uno prova1}, \eqref{th uno prova2} we conclude that 
$\hat{A}_\e / \sqrt{N_\e |\log \e|}$ is bounded in $L^2 (\Omega; \matricidue)$ and its energy is concentrated in the hard core region.
We easily deduce that \eqref{th uno} holds true.

We pass to the proof of \eqref{th due}.  One can readily see that
\[
\int_{\Omega} \frac{  |\tilde{K}_\e^{\sigma_\e}  |^2}{N_\e |\log \e|} \, dx \leq \frac{C}{N_\e |\log \e|} \sum_{i=1}^{M_\e} \frac{1}{r_\e^4} \int_{B_{r_\e} (x_{\e,i})} |x-x_{\e,i}|^2 \, dx = C \, \frac{M_\e}{N_\e |\log \e|} \to 0
\]
as $\e \to 0$. The statement for $\tilde{K}_\e^{\eta_\e}$ can be proved in a similar way.
Finally, since $H_\e \ll N_\e$ by \eqref{bound masse}, we have
\[
\begin{gathered}
\int_{\Omega} \frac{|\hat{S_\e}|^2}{N_\e |\log \e|} \, dx 
\leq C \, \frac{H_\e (\log r_\e - \log \e)}{N_\e |\log \e|} \to 0 
\end{gathered}
\]
which concludes the proof of \eqref{th due} 

We are left to prove \eqref{lim}. 
By the symmetries of the elasticity tensor $\C$ and definition \eqref{rec seq strain}, we have
\begin{equation} \label{paper3 W strain}
\begin{aligned}
\frac{W(\beta_\e) }{N_\e |\log \e|}   =  W \Bigg( 
S + \frac{\hat{S}_\e}{\sqrt{N_\e |\log \e|}} &  - \frac{\tilde{K}^{\sigma_\e}_\e}{\sqrt{N_\e |\log \e|}} + \frac{Q_\e}{\sqrt{N_\e |\log \e|}} + \\
& + \frac{\hat{A}_\e}{\sqrt{N_\e |\log \e|}} - 
\frac{\tilde{K}^{\eta_\e}_\e}{\sqrt{N_\e |\log \e|}} + \frac{R_\e}{\sqrt{N_\e |\log \e|}}   
\Bigg) \,.
\end{aligned}
\end{equation}
 By  \eqref{th due} and \eqref{stima Q}, we get 
\[
\lim_{\e \to 0} \frac{1}{N_\e |\log \e|} \int_{\Omega} W(\beta_\e) \, dx = 
\lim_{\e \to 0}  \int_{\Omega} W \left( S +  \frac{\hat{A}_\e}{\sqrt{N_\e |\log \e|}} \right) \, dx \,. 
\] 
By recalling \eqref{th uno} and \eqref{th uno prova2}, \eqref{th uno}, by H\"older inequality  we deduce \eqref{lim}.

\medskip
\noindent \textbf{Step 2.} The case $\mu = \sum_{l=1}^L \rchi_{\Omega_l} \xi_l \, dx$ and $S \in C^1 (\overline \Omega;\matricidue_{\text{sym}}$).
\medskip

 \noindent In this step we assume that $S \in C^1 (\overline \Omega;\matricidue_{\text{sym}})$ and $A \in \ldueanti$  with $ \mu:= \Curl A$  locally constant, i.e., $\mu= \sum_{l=1}^L \rchi_{\Omega_l} \xi_l \, dx$, with $\xi_l \in \R^2$ and with $\Omega_l \subset \Omega$ that are Lipschitz pairwise disjoint domains such that $|\Omega \setminus \cup_{l=1}^L \Omega_l|=0$. We will construct the recovery sequence by combining the previous step with classical localisation arguments of $\Gamma$-convergence.

Let $S_l := S \zak \Omega_l $, $A_l := A \zak \Omega_l$, $\mu_l := \mu \zak \Omega_l = \xi_l \, dx$. Denote by $(\mu_{l,\e}, \beta_{l,\e})$ the recovery sequence for $(\mu_l,S_l,A_l)$ given by Step 1. We can now define $\mu_\e \in \misure$ and ${\beta}_\e \colon \Omega \to \matricidue$ as
\[
{\beta}_\e := \sum_{l=1}^L \rchi_{\Omega_l} \, \beta_{l,\e} \,, \quad \mu_\e :=\sum_{l=1}^L \mu_{l,\e} \,.
\]
By construction $\mu_\e \in \AD_\e (\Omega)$ and ${\beta}_\e$ satisfies the circulation condition on every $\de B_\e (x_\e)$, with $x_\e \in \spt \mu_\e$. Also notice that on each set $\Om_l$ belonging to the partition of $\Om$, we have  
\[
\Curl {\beta}_\e \zak \Om_l (\mu_{\e})=0\,.
\]
However $\Curl \beta_\e$ could concentrate on the intersection region between two elements of the partition $\{\Om_l\}_{l=1}^L$. To overcome this problem, it is sufficient to notice that by construction 
\[
\begin{aligned}
\nor{  \frac{  \Curl \beta_\e  \zak \Omega_\e (\mu_\e)      }{ \sqrt{N_\e |\log \e|}  }          }_{H^{-1}(\Omega; \R^2)} & \leq 
\sum_{l=1}^L \nor{  \frac{ \beta_{l,\e} - \sqrt{N_\e |\log \e|} S - N_\e A    }{\sqrt{N_\e |\log \e|}}  \cdot t}_{H^{-1/2} (\de \Omega_l;\R^2)}  \\
& = \sum_{l=1}^L \nor{  \frac{ Q_{l,\e} +  R_{l,\e}}{\sqrt{N_\e |\log \e|}}  \cdot t}_{H^{-1/2} (\de \Omega_l;\R^2)} \,,
\end{aligned}
\]
where $Q_{l,\e}$, $R_{l,\e}$ are defined according to \eqref{laplace 1}, with $\Omega$ replaced by $\Omega_l$. 
Therefore by \eqref{de om}, 
\[
\frac{  \Curl \beta_\e  \zak \Omega_\e (\mu_\e)      }{ \sqrt{N_\e |\log \e|}  }  \to 0 \quad \text{strongly in } \quad H^{-1} (\Omega; \R^2) \,.
\]
Hence we can add a vanishing perturbation to $\beta_\e$ (on the scale $\sqrt{N_\e |\log \e|}$), in order to obtain the desired recovery sequence in $\AS_\e (\mu_\e)$.

\medskip
\noindent \textbf{Step 3.} \textit{The general case.} 
\medskip

\noindent Let $(\mu,S,A)$ be in the domain of the $\Gamma$-limit $\mathcal{F}$. 
In view of Step 2 and by standard density arguments of $\Gamma$-convergence, it is sufficient to find sequences
$(\mu_n, S_n,A_n)$ such that $\mu_n$   is locally constant as in Step 2, 
\begin{equation}
\begin{gathered} \label{paper3 succ densa}
S_n  \in C^1 (\overline \Omega; \matricidue_{\rm sym}) \,, \,\, 
A_n \in L^2 (\Om;\matricidue_{\rm skew}) \,, \,\, \text{ with } \,\, \Curl A_n = \mu_n,
\end{gathered}	
\end{equation}
and such that 
\begin{equation} \label{density}
S_n \to S, \,\,\, A_n\to A \text{ in } \, L^2 (\Omega;\matricidue)\,, \quad \mu_n \weakstar \mu \, \text{ in } \, \misure \,, \quad  |\mu_n| (\Omega) \to |\mu| (\Omega),	
\end{equation}
where $S$ and $A$ are the symmetric and antisymmetric part of $\beta$, respectively. 
In fact , we have to  show that \eqref{density} implies
\begin{equation} \label{paper3 gam den}
\lim_{n \to \infty} \mathcal{F}(\mu_n,\beta_n) = \mathcal{F} (\mu,S,A) \,. 
\end{equation}
Since $S_n \to S$ strongly in $L^2 (\Omega;\matricidue)$, then 
\[
\lim_{n \to \infty} \int_{\Om} W(S_n) \, dx = \int_{\Om} W(S) \, dx \,. 
\]
Also, $|\mu_n|(\Om) \to |\mu|(\Om)$ implies 
\[
\lim_{n \to \infty} \int_{\Om} \f \left( \frac{d \mu_n}{d|\mu_n|}   \right) \, d|\mu_n| = \int_{\Om} \f \left( \frac{d \mu}{d|\mu|}   \right) \, d|\mu| \,,
\]
by Reshetnyak's Theorem (\cite[Theorem 2.39]{afp})), so that \eqref{paper3 gam den} is proved. 

Let us then proceed to the construction of the sequences $S_n$,  $A_n$  and $\mu_n$ satisfying properties \eqref{paper3 succ densa}-\eqref{density}. 
Clearly, we can approximate $S$ in $\lduesimm$ with a sequence 
$S_n \in C^1 (\overline \Omega; \matricidue_{\rm sym})$. 
Then, by Remark \eqref{umu}, writing $A$ as  in \eqref{au} we have that $u$ is in $BV(\Om)\cap L^2(\Om)$.  Therefore, by  standard density results in $BV$ we can find a 
sequence of piecewise affine functions  $u_n$ with 
$$
u_n\to u \text{ in } L^2(\Om), \qquad Du_n \weakstar Du = \mu, \qquad |D u_n|(\Om) \to |Du|(\Om) = |\mu|(\Om).
$$
Setting $\mu_n:= D u_n$ and $A_n$ as in \eqref{au} with $u$ replaced by $u_n$, it is readily seen that $\mu_n$ is piecewise constant, and that \eqref{paper3 succ densa} and \eqref{density} holds true, and this concludes the proof of the $\Gamma$-limsup inequality. 
\end{proof}

\begin{remark}\label{stimeg}
Recalling \eqref{de om} and inspecting the density arguments in Step 3 above,  we notice that we can provide a recovery sequence $\beta_\e$ for the limit strain $\beta=S+A$ such that 
\begin{equation}\label{stimameglio0}
 \frac{\beta_\e}{N_\e} \cdot t \to A\cdot t \text{ in } H^{-1/2}(\de\Om;\R^2)\cap L^1(\de\Om;\R^2).
\end{equation}
\end{remark}

\section{Relaxed Dirichlet-type boundary conditions} \label{paper3 boundary}

The aim of this section is to add a Dirichlet type boundary condition to the $\Gamma$-convergence statement of Theorem \ref{thm:gamma}. Fix a boundary condition 
\begin{equation} \label{paper3 bc1}
 g_A \in \ldueanti : \, \Curl g_A \in  H^{-1} (\Omega;\R^2) \cap \misure.
\end{equation}


The rescaled energy functionals $\mathcal{F}^{g_A}_\e \colon \misure \times \strain \to \R$, taking into account the boundary conditions,  are defined by
\begin{equation}\label{energyprimo}
\mathcal{F}^{g_A}_\e (\mu, \beta) :=
\frac{1}{N_\e |\log \e |} \, E_\e (\mu, \beta) + \int_{\de \Om} \f \Big( \Big(g_A-\frac{\beta}{N_\e}\Big) \cdot t \Big) \, ds 
\end{equation}
if $\mu \in \AD_\e (\Omega) \,,\, \beta \in \AS_\e (\mu)$, and
$ + \infty$ otherwise,   
while the candidate $\Gamma$-limit is the functional 
\begin{equation}\label{gbd}
\mathcal{F}^{g_A} \colon ( \duale \cap \misure ) \times \lduesimm  \times \ldueanti \to \R \,,
\end{equation}
with
\begin{equation} \label{limite primo}
\mathcal{F}^{g_A} (\mu, S,A):= 
 \displaystyle \int_\Omega W(S) \, dx + \int_\Omega \f \left( \displaystyle \frac{d \mu}{d |\mu|} \right) \, d |\mu|   + \int_{\de \Om} \f ( (g_A-A) \cdot t ) \, ds \,,  
\end{equation}
if $\Curl A = \mu$ and $\mathcal{F}^{g_A} (\mu, S,A):= \infty$ otherwise. Here $ds$ coincides with $\mathcal{H}^1 \zak \de \Om$. 
The boundary term appearing in the definition of $\mathcal{F}^{g_A}_\e$ and  $\mathcal{F}^{g_A}$ are intended in the sense of traces of $BV$ functions (see \cite{afp}). Indeed, since $A$ and $g_A$ are antisymmetric, there exist $u, a \in L^2(\Om)$ such that
\[
A=\left( \begin{matrix}
 0 &  u \\
 -u  &  0 \\	
 \end{matrix}
\right) \,, \quad g_A=\left( \begin{matrix}
 0 &  a \\
 -a  &  0 \\	
 \end{matrix}
\right) \,,
\]
Notice that $\Curl A = D u$ and $\Curl g_A = D a$ in the sense of distributions. Therefore, as already observed in Remark \ref{umu}, conditions $\Curl A, \Curl g_A \in  \misure$ imply that $a, u \in BV(\Om)$. Hence $a$ and $u$ admit traces on $\de \Om$ that belong to $L^1(\de\Om;\R^2)$. By noting that 
\[
\int_{\de \Om} \f ((g_A-A)\cdot t) \, ds = \int_{\de \Om} \f ((u-a) \nu) \, ds \,,
\]
where $\nu$ is the inner normal to $\Om$, we conclude that the definition of $\mathcal{F}^{g_A}$ is well-posed, as well as the definition of  $\mathcal{F}^{g_A}_\e$.

We are now ready to state the $\Gamma$-convergence result with boundary conditions.

\begin{theorem}\label{gamma conv bc} The following $\Gamma$-convergence statement holds with respect to the convergence of Definition \ref{def:conv}.
\begin{enumerate}[(i)]
\item \textnormal{(Compactness)} Let $\e_n \to 0$ and assume that $(\mu_n,\beta_n) \in \misure \times \strain$ is such that $\sup_n \mathcal{F}^{g_{A}}_{\e_n} (\mu_n,\beta_n ) \leq E$, for some positive constant $E$. Then there exists $(\mu,S,A) \in ( \duale \cap \misure ) \times \lduesimm \times \ldueanti$ such that 
$(\mu_n,\beta_n)$ converges to $(\mu,S,A)$ in the sense of Definition \ref{def:conv}. Moreover $\mu \in \duale$ and $\Curl A = \mu$. 

\item \textnormal{(}$\Gamma$\textnormal{-convergence)}
The energy functionals $\mathcal{F}^{g_A}_\e$ defined in \eqref{energyprimo} $\Gamma$-converge with respect to the convergence of Definition \ref{def:conv} to the functional $\mathcal{F}^{g_A}$ defined in \eqref{limite primo}. Specifically, for every
\[ 
 (\mu,S,A) \in (\misure \, \cap \, \duale) \times \lduesimm \times \ldueanti
\] 
such that $\Curl A= \mu$, we have:
 \begin{itemize}
 \item   \textnormal{(}$\Gamma$\textnormal{-liminf inequality)}
for every sequence $(\mu_\e,\beta_\e) \in \misure \times \strain$ converging to $(\mu,S,A)$ in the sense of Definition \ref{def:conv}, we have
\[
\mathcal{F}^{g_A} (\mu,S,A) \leq \liminf_{\e \to 0} \mathcal{F}^{g_A}_\e (\mu_\e,\beta_\e) \,.
\]
\item 	\textnormal{(}$\Gamma$\textnormal{-limsup inequality)}
there exists a recovery sequence $(\mu_\e,\beta_\e)\in
\misure \times \strain$ such that $(\mu_\e,\beta_\e)$ converges to $(\mu,S,A)$ in the sense of \mbox{Definition \ref{def:conv},} and
\[
\limsup_{\e \to 0} \mathcal{F}^{g_A}_\e (\mu_\e,\beta_\e)  \leq \mathcal{F}^{g_A} (\mu,S,A)  \,.
\]
 \end{itemize}
\end{enumerate}
\end{theorem}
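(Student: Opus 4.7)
The plan is to reduce each of the three statements to its analogue in Theorem \ref{thm:gamma} by isolating the contribution of the added boundary term.

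For compactness, nonnegativity of $\varphi$ (Proposition \ref{paper3 properties density}) gives $\mathcal{F}^{g_A}_{\e_n}\ge\mathcal{F}_{\e_n}$, so a uniform bound on $\mathcal{F}^{g_A}_{\e_n}(\mu_n,\beta_n)$ yields the desired convergences via Theorem \ref{thm:gamma}(i). Moreover, from the coercivity $\varphi(\xi)\ge c^{-1}|\xi|$ of Proposition \ref{paper3 properties density} and the fact that $g_A\cdot t\in L^1(\partial\Om;\R^2)$ (which holds because $g_A$ corresponds to some $a\in BV(\Om)$ by the argument of Remark \ref{umu}), the boundary traces $(\beta_n/N_{\e_n})\cdot t$ are uniformly bounded in $L^1(\partial\Om;\R^2)$.

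For the $\Gamma$-liminf, using $\liminf(a_\e+b_\e)\ge\liminf a_\e+\liminf b_\e$ I would handle the bulk and the boundary contributions separately. The bulk liminf is exactly Theorem \ref{thm:gamma}(ii), so it only remains to control the boundary term. The decisive intermediate step is the weak-$*$ convergence of the traces
\begin{equation*}
\frac{\beta_\e}{N_\e}\cdot t \, ds \;\weakstar\; A\cdot t \, ds \quad \text{in }\mathcal{M}(\partial\Om;\R^2).
\end{equation*}
To prove it, I would first note that $\sym\beta_\e/N_\e\to 0$ strongly in $L^2$ (using $\sqrt{|\log\e|/N_\e}\to 0$), while $\sk\beta_\e/N_\e\weak A$; hence $\beta_\e/N_\e\weak A$ in $L^2(\Om;\matricidue)$. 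Combining this with $\Curl(\beta_\e/N_\e)=\mu_\e/N_\e\weakstar\mu=\Curl A$, the Gauss-Green identity in $H(\Curl,\Om)$ tested against an arbitrary $\phi\in C^1(\overline\Om;\R^2)$ lets every interior term pass to the limit, from which the trace convergence is read off. Reshetnyak's lower semicontinuity theorem (\cite[Thm 2.38]{afp}), applied to the convex, positively $1$-homogeneous density $\varphi$ and the measures $(g_A-\beta_\e/N_\e)\cdot t\, ds\weakstar(g_A-A)\cdot t\, ds$ on $\partial\Om$, then yields the boundary liminf.

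For the $\Gamma$-limsup I would simply take the recovery sequence $(\mu_\e,\beta_\e)$ already built for Theorem \ref{thm:gamma}(ii): by Remark \ref{stimeg} it satisfies $(\beta_\e/N_\e)\cdot t\to A\cdot t$ strongly in $L^1(\partial\Om;\R^2)$. Since $\varphi$ is Lipschitz (being convex and linearly bounded by Proposition \ref{paper3 properties density}), this convergence gives
\begin{equation*}
\int_{\partial\Om}\varphi\Big(\Big(g_A-\frac{\beta_\e}{N_\e}\Big)\cdot t\Big)\, ds\;\longrightarrow\;\int_{\partial\Om}\varphi\big((g_A-A)\cdot t\big)\, ds,
\end{equation*}
and combined with Theorem \ref{thm:gamma}(ii) the full limsup matches $\mathcal{F}^{g_A}(\mu,S,A)$. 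The main obstacle is the liminf for the boundary term: since the symmetric and antisymmetric parts of $\beta_\e$ live on different scales, identifying their joint boundary trace is delicate, and is exactly what the $H(\Curl)$ Gauss-Green identity accomplishes once coupled with the measure-valued convergence of the curls; the rest of the argument is routine.
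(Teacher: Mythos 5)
The compactness argument is exactly the paper's (dominating by $\mathcal{F}_\e$ and invoking Theorem \ref{thm:gamma}(i)), and your limsup is in fact \emph{more direct} than the paper's: the authors first modify the limit to $(\mu_\delta,S_\delta,A_\delta)$ with $A_\delta=g_A$ in a boundary layer $\Omega\setminus\omega_\delta$ (so that the boundary term vanishes for the corresponding recovery sequence) and then diagonalise over $\delta$; you instead use Remark \ref{stimeg} for the original triple and the Lipschitz continuity of $\f$ to pass to the limit in the boundary integral directly. That is a legitimate shortcut, though you should note that you are implicitly also invoking the recovery sequence structure to guarantee that $(\beta_\e/N_\e)\cdot t$ is an $L^1(\de\Om)$ function so the boundary term is even defined.

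The liminf, however, has a genuine gap, precisely at the step you flag as ``delicate''. The claim $\frac{\beta_\e}{N_\e}\cdot t\,ds\weakstar A\cdot t\,ds$ in $\mathcal{M}(\de\Om;\R^2)$ does not follow from the listed ingredients. First, $\Curl\beta_\e\neq\mu_\e$: for $\beta_\e\in\AS_\e(\mu_\e)$ the distributional curl is concentrated on $\bigcup_i\de B_\e(x_{\e,i})$ and is a priori only an element of $H^{-1}$, not a measure with controlled total variation, so the Gauss--Green identity must be applied to the modified strains $\tilde\beta_\e$ of \eqref{defbt} (which agree with $\beta_\e$ near $\de\Om$ and satisfy $|\Curl\tilde\beta_\e|(\Om)=|\mu_\e|(\Om)$). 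More importantly, even after this correction, the Gauss--Green identity tested against $\phi\in C^1(\conj\Om)$ gives
\[
\int_{\de\Om}\phi\,\frac{\tilde\beta_\e}{N_\e}\cdot t\,ds
=\int_\Om\phi\,d\Big(\frac{\Curl\tilde\beta_\e}{N_\e}\Big)+\int_\Om\frac{\tilde\beta_\e}{N_\e}\cdot J\nabla\phi\,dx\,,
\]
and the first term on the right converges against $\phi\in C(\conj\Om)$ to $\int_{\conj\Om}\phi\,d\lambda$, where $\lambda$ is the weak-$*$ limit of $\Curl\tilde\beta_\e/N_\e$ in $\mathcal{M}(\conj\Om)$. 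This $\lambda$ equals $\mu$ on $\Om$ but may carry an extra piece $\lambda_0$ on $\de\Om$, created by dislocations whose supports approach the boundary. Consequently the traces converge to $A\cdot t\,ds+\lambda_0$, not to $A\cdot t\,ds$, and the separate application of Reshetnyak to the boundary sequence can produce a strict undershoot whenever $\lambda_0\neq0$. The missing mass is exactly compensated by an overshoot in the bulk plastic liminf (the self-energies of the escaping dislocations are still counted there), and recombining the two requires the subadditivity $\f(a+b)\leq\f(a)+\f(b)$ applied to the split $\big((g_A-A)\cdot t\,ds-\lambda_0\big)+\lambda_0$; your estimate $\liminf(a_\e+b_\e)\geq\liminf a_\e+\liminf b_\e$ simply discards this coupling. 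The paper avoids the issue altogether by extending $g_A$ to a neighbourhood $U_\delta$, gluing $\tilde\beta_\e$ with $N_\e\tilde g_A$ to obtain $\hat\beta_\e$, and applying Reshetnyak once on the open set $U_\delta$ to the single measure $\hat\mu_\e=\Curl\hat\beta_\e/N_\e$, which contains the bulk dislocation density, the boundary mismatch, and the exterior curl all at once; the exterior contribution is then subtracted because it is $\e$-independent. If you want to retain your bulk-plus-boundary decomposition, you must explicitly identify $\lambda_0$ with the boundary part of the limit of $\mu_\e/N_\e$ in $\mathcal{M}(\conj\Om)$ and invoke subadditivity of $\f$ to recombine, rather than treat the two terms as independent.
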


The compactness statement readily follows from the compactness of Theorem  \ref{thm:gamma}, since $\mathcal{F}^{g_A}_\e (\mu,\beta) \ge \mathcal{F}_\e (\mu,\beta)$. Let us proceed with the proof of the $\Gamma$-convergence result.

\begin{proof}[Proof of $\Gamma$-$\limsup$ inequality of Theorem \ref{gamma conv bc}.] 
Let $(\mu,S,A)$ be given in the domain of the $\Gamma$-limit $\mathcal{F}^{g_A}$. We will construct a recovery sequence in two steps, relying on Theorem \ref{thm:gamma}. 

\medskip
\noindent \textbf{Step 1.} Approximation of the boundary values.
\medskip

\noindent For $\delta>0$ fixed, set 
$\omega_\delta := \{ x \in \Omega \, \colon \dist(x,\de \Om) > \delta \}$, so that $\omega_\delta \subset \subset \Om$, and assume without loss of generality that $w_\delta$ is Lipschitz. 
Define $S_\delta \in \lduesimm$ and $A_\delta \in \ldueanti$ as
\begin{equation} \label{strain modificato}
A_\delta := \begin{cases} 
A   &   \text{ in } \, \omega_\delta\,, \\ 	
g_A   &  \text{ in } \, \Omega \setminus \omega_\delta \,, \\
 \end{cases} \qquad
S_\delta := \begin{cases} 
S   &   \text{ in } \, \omega_\delta \,, \\ 	
0   &  \text{ in } \, \Omega \setminus \omega_\delta \,. \\
 \end{cases} 
\end{equation}
Further, let $\mu_\delta \in \misure$ be such that
\begin{equation} 
\mu_\delta := \mu \zak \omega_\delta  + \Curl g_A  \zak (\Om \setminus \omega_\delta) + (g_A- A) \cdot t \,\, \mathcal{H}^1 \zak \de \omega_\delta \, . \label{misura modificata}
\end{equation}
Notice that 
\begin{equation} \label{curl dato al bordo}
\Curl A_\delta = \mu_\delta \quad \text{ and } \quad \mu_\delta \in H^{-1} (\Om;\R^2) \,,
\end{equation}
therefore $(\mu_\delta,S_\delta,A_\delta)$ belongs to the domain of the functional $\mathcal{F}$. 
Also note that
\begin{equation}  \label{densita dato}
\begin{gathered}	
S_\delta \to S  \,, \, A_\delta \to A  \, \text{ in } \, L^2 (\Omega; \matricidue) \,, 
\\
\mu_\delta \weakstar \mu \, \text{ in } \, \misure \,, \, |\mu_\delta|(\Om) \to |\mu|(\Om) + \int_{\partial \Om} |(g_A -A)\cdot t| \, ds \,, 
\end{gathered}
\end{equation}
as $\delta \to 0$. Therefore, by Reshetnyak's Theorem (see \cite[Theorem 2.39]{afp}), we have
\begin{equation} \label{densita energia dato}
\lim_{\delta \to 0}  \mathcal{F}(\mu_\delta, S_\delta, A_\delta) = \mathcal{F}^{g_A} (\mu,S,A) \,.	
\end{equation}
It will now be sufficient to construct dislocation measures $ \mu_{\delta,\e}$  and strains $\beta_{\delta,\e}$ 
such that $(\mu_{\delta,\e}, \beta_{\delta,\e})$ converges to $(\mu_\delta,S_\delta,A_\delta)$ in the sense of Definition \ref{def:conv} and that
\begin{equation} \label{thesis dato}
\lim_{\e \to 0} \mathcal{F}^{g_A}_{\e} (\mu_{\delta,\e}, \beta_{\delta,\e} ) =
\mathcal{F} (\mu_{\delta}, S_{\delta},A_\delta) \,.
\end{equation}
Indeed, by taking a diagonal sequence $(\mu_{\delta_{\e},\e}, \beta_{\delta_{\e},\e})$ and using \eqref{densita dato}, \eqref{densita energia dato}, the thesis will follow.

\medskip
\noindent \textbf{Step 2.} Recovery sequence for strains satisfying the boundary condition. 
\medskip

\noindent Let us now proceed to construct the sequence $(\mu^{g_\e}_{\delta,\e}, \beta^{g_\e}_{\delta,\e})$ as stated in the previous step.

From Theorem \ref{thm:gamma}, there exist a sequence  $(\mu_{\delta,\e}, \beta_{\delta,\e})$ 
converging to $(\mu, S, A)$ in the sense of Definition \ref{def:conv}, 
and such that 
\begin{equation} \label{conv energia}
\lim_{\e \to 0} \mathcal{F}_{\e} (\mu_{\delta,\e}, \beta_{\delta,\e}) = \mathcal{F}(\mu,S, A) \,.
\end{equation}
Moreover (see Remark \ref{stimeg}), we can assume that
$\beta_\e$ satisfies \eqref{stimameglio0}, from which it easily follows \eqref{conv energia}.

\end{proof}

\begin{proof}[Proof of $\Gamma$-$\liminf$ inequality of Theorem \ref{gamma conv bc}.]
Let $(\mu,S,A)$ be in the domain of the $\Gamma$-limit $\mathcal{F}^{g_A}$. Assume that $(\mu_\e,\beta_\e)$
converges to $(\mu,S,A)$ in the sense of Definition \ref{def:conv}. By combining an extension argument with the $\Gamma$-$\liminf$ inequality in  Theorem \ref{thm:gamma} we will show that
\begin{equation} \label{thesis liminf}
\mathcal{F}^{g_A} (\mu,S,A) \leq \liminf_{\e \to 0} \mathcal{F}^{g_A}_\e (\mu_\e,\beta_\e) \,.
\end{equation}

Fix $\delta>0$ and define $U_\delta := \{ x \in \R^2 \, \colon \, \dist (x, \Omega)<\delta \}$. By standard reflexion arguments one can extend $g_A$ to 
 $\tilde{g}_A \in L^2(U_\delta ; \matricidue_{\rm skew})$, in such a way that
 $\tilde{\mu}_A:= \Curl \tilde{g}_A$ is a  measure on 
$U_\delta$ satisfying  $|\tilde{\mu}_A|(\de \Om) = 0$. 
Consider now the functions $\tilde \beta_\e$ defined as in \eqref{defbt} (with $\e_n$ replaced by $\e$), and set
$$
\hat \beta_\e:=
\begin{cases}
	\tilde \beta_\e &  \text{ in } \, \Om \,, \\
		N_\e \tilde{g}_A  &  \text{ in } \, U_\delta \setminus \Om \,,
\end{cases}
\qquad
\hat \beta:=
\begin{cases}
	A &  \text{ in } \, \Om \,, \\
		\tilde{g}_A  &  \text{ in } \, U_\delta \setminus \Om \,.
\end{cases}
$$
By   construction we have $\frac{\hat \beta_\e}{N_\e}\weak \hat \beta$ in $L^1(U_\delta)$, 
so that
$$
\hat \mu_\e := \frac{\Curl \hat \beta_\e}{N_\e} \weakstar \mu + ((g_A-A) \cdot t ) \mathcal H^1 \zak \partial \Om + \Curl \tilde g_A \zak {(U_\delta\setminus \Om)}
$$
Recalling \eqref{defbt2}, \eqref{paper3 liminf1} and \eqref{paper3 liminf2}, we conclude
\begin{multline}
\liminf_{\e \to 0} \mathcal{F}^{g_A}_\e (\mu_\e,\beta_\e)\ge 
\liminf_{\e \to 0}  \frac{1}{\sqrt{N_\e |\log\e|} } \int_{\Om} W(\sym{\beta}_\e) \, dx   
\\
+
\liminf_{\e \to 0} \frac{1}{N_\e}  \int_\Omega \f \left(\frac{d \mu_\e}{d | \mu_\e| }\right) d | \mu_\e|   + \int_{\de \Om} \f \Big( \Big(g_A-\frac{\beta_\e}{N_\e}\Big) \cdot t \Big) \, ds    
\\
\ge 
\int_{\Om} W(S) \, dx  +
\liminf_{\e \to 0}   \int_{U_\delta} \f \left(\frac{d \hat \mu_\e}{d |\hat \mu_\e|}  \right) d |\hat \mu_\e| 
- \int_{U_\delta\setminus \Om} \f \left(\frac{d \Curl \tilde g_A}{d |\Curl \tilde g_A| } \right) d |\Curl \tilde g_A|\\
\ge
\int_\Omega W(S) \, dx + \int_\Omega \f \left( \displaystyle \frac{d \mu}{d |\mu|} \right) \, d |\mu|   + \int_{\de \Om} \f ( (g_A-A) \cdot t ) \, ds =
\mathcal{F}^{g_A}_\e (\mu,S,A).
 \end{multline}
\end{proof}

\section{Linearised polycrystals as minimisers of the $\Gamma$-limit} \label{paper3 poly}

Let $\Om \subset \R^2$ be a bounded domain with Lipschitz continuous boundary. 
Let $k \in \N$ be fixed and let $\{U_i\}_{i=1}^k$ be a Caccioppoli partition of $\Omega$ (see \cite[Section 4.4]{afp}). Moreover fix $m_1,\dots,m_k \in \R_+$ with $m_i<m_{i+1}$, and define the piecewise constant function $a \in BV(\Om)$ as
\begin{equation} \label{paper3 def a}
a:=\sum_{i=1}^k m_i \rchi_{U_i} \,.
\end{equation}
In particular, \eqref{paper3 def a} implies that $a \in L^{\infty} (\Om)$ and $D a \in \misure$.   
We can now define the piecewise constant boundary condition $g_A \in L^{\infty} (\Om;\matricidue_{\rm skew})$ as
\begin{equation} \label{paper3 ga}
g_A := \left(
\begin{matrix}
0    &   a \\
-a    &    0	
\end{matrix}
\right).
\end{equation}
Notice that $g_A \in L^2 (\Om;\matricidue_{\rm skew})$ and $\Curl g_A = D a$, therefore $\Curl g_A \in \duale \cap \misure$. In this way $g_A$ is an admissible boundary condition for $\mathcal{F}^{g_A}$, as required in \eqref{paper3 bc1}.  

We wish to minimise the $\Gamma$-limit \eqref{limite primo} with boundary condition $g_A$ prescribed by \eqref{paper3 def a}-\eqref{paper3 ga}.  
Since the elastic energy and plastic energy are decoupled in $\mathcal{F}^{g_A}$, and there is no boundary condition fixed on the elastic part of the strain $S$, we have 
\[
\inf \mathcal{F}^{g_A}(\Curl A,S,A)= \inf \mathcal{F}^{g_A}(\Curl A,0,A) \,.
\]
Therefore it is sufficient to study
\begin{equation} \label{prob min bc}
\begin{aligned}
\inf \Bigg\{  \int_{\Omega} \f (\Curl A)  + \int_{\de \Omega}  \f ((g_A-A) & \cdot t) \, ds \, \colon \, A \in \ldueanti, \\
& \Curl A \in H^{-1} (\Om ;\R^2) \cap \mathcal{M} (\Om;\R^2)   \Bigg\} \,,
\end{aligned}
\end{equation}
where $t$ is the unit tangent to $\de \Om$ defined as the $\pi/2$ counter-clockwise rotation of the outer normal $\nu$ to $\Om$, $\f \colon \R^2 \to [0,\infty)$ is the density defined in \eqref{self}, and
\begin{equation} \label{anisotropic perimeter}
\int_{\Omega} \f (\mu) :=  \int_{\Omega} \f \left( \frac{d \mu}{ d |\mu|  } \right)  \, d |\mu|
\end{equation}
is the anisotropic $\f$-total variation for a measure $\mu \in \misure$. Note that \eqref{anisotropic perimeter} is well-posed, since $\f$ satisfies the properties given in Proposition \ref{paper3 properties density}. 

For $A \in L^2(\Om;\matricidue_{\rm skew})$, we have that 
\begin{equation} \label{paper3 A}
A=  \left(
\begin{matrix}
0    &   u \\
-u    &    0	
\end{matrix} 
\right) \,,
\end{equation}
for some $u \in L^2(\Om)$. Moreover $\Curl A = D u$, therefore condition $\Curl A \in \misure$ implies $u \in BV(\Om)$. Also notice that 
\[
\int_{\de \Om} \f ((g_A-A)\cdot t) \,ds = 
\int_{\de \Om} \f ((u-a) \nu) \,ds\,, 
\]
where $a$ is the piecewise constant function \eqref{paper3 def a}.  We claim that \eqref{prob min bc} is equivalent to the following minimisation problem 
\begin{equation} \label{conj}
\inf \left\{ \int_{\Omega} \f (D u)  + \int_{\de \Omega}  \f ((u-a)\nu) \, ds \, \colon \, u \in BV (\Omega) \right\} \,.
\end{equation}
Indeed, we already showed that if $A$ is a competitor for \eqref{prob min bc}, then the function $u$, given by \eqref{paper3 A}, belongs to $BV(\Om)$, and it is a competitor for \eqref{conj}. Conversely, assume that $u \in BV(\Om)$ and define $A$ through \eqref{paper3 A}. Since $u \in BV(\Om)$, then $\Curl A = Du \in \misure$. Moreover, recall that the immersion $BV (\Om) \hookrightarrow L^2(\Om)$ is continuous, therefore $u \in L^2(\Om)$, which implies $A \in L^2(\Om;\matricidue)$, so that $\Curl A \in \duale$. This shows that \eqref{prob min bc} and \eqref{conj} are equivalent.

The main result of this section is that, given the piecewise constant boundary condition $a$ defined in \eqref{paper3 def a}, there exists a piecewise constant minimiser $\tilde{u}$ to \eqref{conj}. In our model the function $\tilde{u}$ corresponds to a linearised polycrystal.

\begin{theorem} \label{prop:loc const}
There exists a locally constant minimiser $\tilde{u} \in BV(\Om)$ to \eqref{conj}, i.e.,
\[
\tilde{u}= \sum_{i=1}^k m_i \rchi_{\Om_i},
\]
where $\{\Om_i\}_{i=1}^k$ is a Caccioppoli partition of $\Omega$, and the values $m_i$ are the ones of \eqref{paper3 def a}.  
\end{theorem}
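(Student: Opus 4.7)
The idea is to rewrite \eqref{conj} as a pure anisotropic $\f$-total variation problem on an enlarged domain with a fixed piecewise-constant boundary datum, and then use the coarea formula for $\f$-TV together with the fact that the extended datum only takes finitely many values to select a minimiser of the desired form.

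First I would fix a Lipschitz open set $\Omega'$ with $\Omega \Subset \Omega'$ and a piecewise constant extension $\tilde a \in BV(\Omega')$ of $a$ taking values in $\{m_1,\dots,m_k\}$. For $u \in BV(\Omega)$ let $\hat u \in BV(\Omega')$ denote the function equal to $u$ on $\Omega$ and to $\tilde a$ on $\Omega'\setminus\overline\Omega$. The BV trace formula, together with the $1$-homogeneity of $\f$, gives
\[
\int_{\Omega'}\f(D\hat u) \;=\; \int_\Omega \f(Du) \;+\; \int_{\partial \Omega}\f\bigl((u-a)\,\nu\bigr)\, d\mathcal{H}^1 \;+\; K,
\]
with $K:=\int_{\Omega'\setminus\overline\Omega}\f(D\tilde a)$ a fixed constant. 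Thus \eqref{conj} is equivalent to minimising $\int_{\Omega'}\f(D\hat u)$ over $\hat u \in BV(\Omega')$ subject to $\hat u = \tilde a$ on $\Omega'\setminus\overline\Omega$. Existence of a minimiser $\hat u^\star$ follows from the direct method: the truncation $(\hat u \wedge m_k)\vee m_1$ does not increase the functional (since $m_1\leq \tilde a\leq m_k$ and $\f$ is convex and positively $1$-homogeneous), so I can restrict to a bounded subset of $BV(\Omega')$ and apply Reshetnyak lower semicontinuity.

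Next I would apply the anisotropic coarea formula,
\[
\int_{\Omega'}\f(D\hat u^\star) \;=\; \int_{-\infty}^{+\infty} P_\f\bigl(\{\hat u^\star > t\};\Omega'\bigr)\,dt,
\]
where $P_\f$ denotes the anisotropic perimeter associated with $\f$. Setting $m_0 := -\infty$, $m_{k+1}:=+\infty$, the key observation is that the constraint $\{\tilde a>t\}\setminus\overline\Omega$ depends on $t$ only through the index $i$ with $t \in [m_i, m_{i+1})$, since $\tilde a$ takes only the values $m_1,\dots,m_k$. I would then choose, for each $i=1,\dots,k-1$, a minimiser $F_i$ of $P_\f(\,\cdot\,;\Omega')$ among sets $E\subset\Omega'$ with $E\setminus\overline\Omega = \{\tilde a>m_i\}\setminus\overline\Omega$; replacing $\{\hat u^\star>t\}$ by $F_i$ throughout each strip $t\in[m_i,m_{i+1})$ does not increase the energy, by the coarea formula. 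Using submodularity of $P_\f$ together with the nested structure of the constraints, a standard rearrangement $F_i \mapsto F_i\cup F_{i+1}$, $F_{i+1}\mapsto F_i\cap F_{i+1}$ yields $F_1\supset F_2\supset\cdots\supset F_{k-1}$. The resulting function
\[
\hat u^{\star\star} \;:=\; m_1 \;+\; \sum_{i=1}^{k-1}(m_{i+1}-m_i)\,\chi_{F_i} \;=\; \sum_{i=1}^k m_i\,\chi_{\Omega_i},
\]
with $\{\Omega_i\}_{i=1}^k$ the resulting Caccioppoli partition of $\Omega'$, is a piecewise constant minimiser, whose restriction to $\Omega$ provides the desired $\tilde u$.

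The main technical obstacle is the identification of the boundary penalty with the jump part of the $\f$-TV of the extension: one must verify that for the BV function $\hat u$ the concentration of $\int_{\Omega'}\f(D\hat u)$ on $\partial \Omega$ is exactly $\int_{\partial\Omega}\f((u-a)\,\nu)\,d\mathcal{H}^1$. This follows from the jump representation for BV functions and the positive $1$-homogeneity of $\f$, but needs care because $\f$ is only a convex positively $1$-homogeneous density, not a norm. Once this identification is in place, the coarea step and the nested rearrangement of the optimal level sets are routine.
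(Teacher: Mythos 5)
Your Steps 1--2 coincide with the paper's: extend $a$ to a piecewise constant $\tilde a$ on a slightly larger Lipschitz domain $\Omega'$ with $|D\tilde a|(\partial\Omega)=0$, identify the boundary penalty with the jump contribution of the extended function across $\partial\Omega$, and obtain a minimiser $\hat u^\star$ by truncation, $BV$ compactness and Reshetnyak lower semicontinuity.

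For Step 3 you take a genuinely different, and heavier, route. The paper applies the anisotropic coarea formula to $\hat u^\star$ itself and, for each $i$, uses an averaging argument to pick $t_i\in(m_i,m_{i+1})$ with $\Per_\f(E_{t_i},\Omega')$ at most the mean of $t\mapsto\Per_\f(E_t,\Omega')$ over $(m_i,m_{i+1})$; the super-level sets $E_{t_i}$ of the single function $\hat u^\star$ are automatically nested, so the piecewise constant $\tilde u := m_1+\sum_i(m_{i+1}-m_i)\chi_{E_{t_i}}$ is well defined, admissible, and has $\f$-total variation no larger than that of $\hat u^\star$. You instead introduce, for each strip, a fresh set $F_i$ minimising $\Per_\f(\cdot;\Omega')$ under the trace constraint --- an extra existence claim to justify --- and then must rearrange the $F_i$ to be nested. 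Here there is a real gap: the pairwise swap $(F_i,F_{i+1})\mapsto(F_i\cup F_{i+1},F_i\cap F_{i+1})$ applied along a chain of $k-1\ge 3$ sets does not output a nested family in one pass; enforcing $F_2\supset F_3$ can enlarge $F_2$ out of $F_1$. Your argument can be repaired along the lines you gesture at (``nested structure of the constraints''): define $F_i':=F_1\cap\cdots\cap F_i$. By induction, $F_i'\setminus\overline\Omega$ is still the required trace set, and submodularity together with the minimality bound $\Per_\f(F_{i-1}'\cup F_i;\Omega')\ge\Per_\f(F_{i-1}';\Omega')$ yields $\Per_\f(F_i';\Omega')\le\Per_\f(F_i;\Omega')$, so the $F_i'$ form a nested minimising chain. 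With this correction your construction closes, but note that the paper's choice of level sets of $\hat u^\star$ sidesteps both the auxiliary existence result and the nestedness issue and is therefore shorter and more elementary.
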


The proof of this theorem relies on the anisotropic coarea formula. For the readers convenience we briefly recall it here. 
For $E \subset \Omega$ of finite perimeter, the anisotropic $\f$-perimeter of $E$ in $\Omega$ is defined as 
\[
\Per_\f (E,\Om) := \int_{\Omega} \f (D \rchi_E) \,. 
\]
Since $\f$ is convex and positively 1-homogenous, the anisotropic coarea formula holds true for every $u \in BV(\Om)$:
\begin{equation} \label{coarea}
	\int_{\Omega} \f (D u) = \int_{-\infty}^{\infty} \Per_{\f} (E_t,\Om) \, dt \,,
\end{equation}
where $E_t$ is the level set $E_t:= \{ x \in \Omega \, \colon \, u(x)>t \}$, defined for every $t \in \R$.

\begin{proof}[Proof of Theorem \ref{prop:loc const}.] 
\hfill

\noindent \textbf{Step 1.} Equivalent minimisation problem.

\noindent We start by rewriting \eqref{conj} as a boundary value problem in $BV$. Let $\Omega':=\{ x \in \R^2 \, \colon \, \dist (x, \Om)< 1 \}$, so that $\Om \subset \subset \Om'$.  Consider a piecewise constant extension $\tilde{a} \in BV(\Om')$ of the function $a \in BV(\Om)$ defined in \eqref{paper3 def a}, that is,
\[
\tilde{a} = \sum_{i=1}^{k} m_i \, \rchi_{U_i'}\,,
\] 
where $\{ U_i'\}_{i=1}^k$ is a Caccioppoli partition of $\Om'$, agreeing with $\{ U_i\}_{i=1}^k$ on $\Om$. 
This is possible since the extension can be chosen such that $|D \tilde{a}|(\de \Om)=0$, that is, we are not creating any jump on $\de \Om$. 
Consider the new minimisation problem
\begin{equation} \label{prob ausiliario}
I:= \inf \left\{ \int_{\Om '} \f (D u) \, \colon \, u \in BV(\Om'), \,\, u= \tilde{a} \text{ a.e. in } \Om' \setminus \Om \right\} \,. 
\end{equation}
Finding a solution to \eqref{prob ausiliario} is equivalent to finding a solution to \eqref{conj}. 
Indeed, if $u \in BV(\Om')$ is such that $u= \tilde{a}$ in $\Om' \setminus \Om$ then  
\begin{equation} \label{paper3 app5}
Du = Du \, \zak \Om  + (u^\Om - a^\Om) \, \nu \, \mathcal{H}^1 \zak \de \Om + D\tilde{a} \, \zak ( \Om' \setminus \Om ) \,,
\end{equation}
where $u^\Om, a^\Om \in L^1(\de \Om)$ are the traces of $u$ and $a$ on $\de \Om$. Notice that we can use $a^\Om$ in \eqref{paper3 app5} because the extension $\tilde{a}$ is such that $|D\tilde{a}|(\de \Om)=0$, hence we have $\tilde{a}^+_{\de \Om} = \tilde{a}^-_{\de \Om}=a^{\Om}$ $\mathcal{H}^{n-1}$-a.e. in $\de \Om$.

\smallskip
\noindent \textbf{Step 2.} Existence of a minimiser for \eqref{prob ausiliario}. 
\smallskip

\noindent Let $u_j \in BV(\Om')$ be a minimising sequence for \eqref{prob ausiliario}, that is $u_j = \tilde{a}$ a.e. on $\Om' \setminus \Om$ and 
\begin{equation} \label{paper3 app6}
\lim_{j \to \infty} \int_{\Om'} \f (D u_j) = I \,.
\end{equation}
By standard truncation arguments we can assume that 
$
\|u_j\|_\infty \le \max_i |m_i|.
$
In particular, from \eqref{paper3 app6}, we deduce that $\sup_j \nor{u_j}_{BV(\Om')}<\infty$. By compactness in $BV$, there exists $\tilde{u} \in BV(\Om')$ such that, up to subsequences, $u_j \to \tilde{u}$ in $L^1(\Om')$ and $Du_j \weakstar D\tilde{u}$ weakly in $\mathcal{M}(\Om';\R^2)$. Since $u_j = \tilde{a}$ a.e. on $\Om' \setminus \Om$, the strong convergence in $L^1$ implies that (up to subsequences) $u_j \to \tilde{u}$ a.e. in $\Om'$, so that $\tilde{u} = \tilde{a}$ a.e. in $\Om' \setminus \Om$. From Reshetnyak's lower semicontinuity Theorem we conclude that
\[
\int_{\Om'} \f(D\tilde{u}) \leq \liminf_{j \to \infty} \int_{\Om'} \f(D u_j) = I \,,
\]
so that $\tilde{u}$ is a minimiser for \eqref{prob ausiliario}.

\smallskip
\noindent \textbf{Step 3.} Existence of a piecewise constant minimiser for \eqref{conj}.
\smallskip 

\noindent Let $u$ be a minimiser for \eqref{prob ausiliario}. By a standard truncation argument we can assume that $m_1 \leq u \leq m_k$ a.e. on $\Omega'$. Formula \eqref{coarea} then reads
\begin{equation} \label{conj1}
\int_{\Omega '} \f (Du) = \sum_{i=1}^{k-1} \int_{m_i}^{m_{i+1}} \Per_\f (E_t , \Om') \, dt  \,,
\end{equation}
where $E_t:=\{ x \in \Om' \colon u(x)>t\}$ for $t \in \R$. 
By the mean value theorem, for every $i=1,\dots,k-1$, there exists   $t_i \in (m_i,m_{i+1})$ such that
\begin{equation} \label{conj2}
\int_{m_i}^{m_{i+1}} \Per_\f (E_t , \Om') \, dt \geq (m_{i+1}-m_i) \Per_\f (E_{t_i},\Om')\,.
\end{equation}

We define the piecewise constant function
\[
\tilde{u}(x):= 
m_i      \qquad  \text{ if }  	x \in E_{t_{i-1}} \smallsetminus E_{m_i}\, , 
\]
for $i=1,\dots,k$, where we have set $E_{t_0}:=\Om'$ and we notice   that $E_{m_k}= \emptyset$ set theoretically.
Since the sets $E_t$ have finite perimeter in $\Om'$, we have that $\tilde{u} \in BV(\Omega')$. Moreover, by construction, $\tilde{u} = \tilde{a}$ on $\Om' \setminus \Om$, so that $\tilde{u}$ is a piecewise constant competitor for \eqref{prob ausiliario}. It is immediate to compute that
\[
D \tilde{u} = \sum_{i=1}^{k-1} (m_{i+1}-m_i) \, \nu_{E_{t_i}} \, \mathcal{H}^1 \zak \de^* E_{t_i} \,,
\] 
so that
\begin{equation} \label{conj3}
\begin{aligned}
\int_{\Omega'} \f (D \tilde{u}) & = \sum_{i=1}^{k-1} (m_{i+1}-m_i) \int_{\de^* E_{t_i}} \f(\nu_{E_{t_i}}) \, d \mathcal{H}^1 \\
& = \sum_{i=1}^{k-1} (m_{i+1}-m_i)  \Per_\f (E_{t_i},\Om') \,.
\end{aligned}
\end{equation}
By minimality of $u$ and \eqref{conj1}-\eqref{conj3} we conclude that $\tilde{u}$ is a locally constant minimiser for \eqref{prob ausiliario}. Hence $\left. \tilde{u} \right|_{\Om}$ is a locally constant minimiser for \eqref{conj}.
\end{proof}

\section{Conclusions and perspectives} \label{paper3 conclusions}

The aim of this paper is to describe polycrystalline structures from a variational point of view. Grain boundaries and the corresponding grain orientations are not introduced as internal variables of the energy, but they spontaneously arise as a result of energy minimisation, under suitable boundary conditions. 

We work under the hypothesis of linear planar elasticity as in \cite{glp}, with the reference configuration $\Om \subset \R^2$ representing a section of an infinite cylindrical crystal. The elastic energy functional depends on the lattice spacing $\e$ of the crystal and we allow $N_\e$ edge dislocations in the reference configuration, with $N_\e \to \infty$ as $\e \to 0$. Each dislocation contributes by a factor $|\log \e|$ to the elastic energy, so that the natural rescaling for the energy functional is $N_\e |\log \e|$. We work in the energy regime
\[
\frac{1}{\e} \gg N_\e \gg |\log \e| \,,
\]
which accounts for grain boundaries that are mutually rotated by an infinitesimal angle $\theta \approx 0$. 

After rescaling the elastic energy of such system of dislocations and sending the lattice spacing $\e$ to zero, 
in Theorem \ref{thm:gamma} we derive by $\Gamma$-convergence a macroscopic energy functional of the form
\[
\mathcal{F}(\mu, S,A) = \int_\Om \C S : S \,dx + \int_{\Om} \f \left( \frac{d \mu}{d |\mu|} \right) \, d|\mu| \,, 
\]
where $\C$ is the linear elasticity tensor and $\f$ is a positively $1$-homogeneous density function, defined through a suitable cell-problem. The elastic energy is computed on $S$, that represents the elastic part of the macroscopic strain. The plastic energy depends only on the dislocation measure $\mu$, which is coupled to the plastic part $A$ of the macroscopic strain through the relation $\mu=\Curl A$. As a consequence, $\mu$ is a curl-free vector Radon measure. The contributions of elastic energy and plastic energy are decoupled in the $\Gamma$-limit $\mathcal{F}$, due to the fact that $S$ and $A$ live on different scales: $\sqrt{N_\e |\log \e|}$ and $N_\e$, respectively. 

Indeed this is the main difference with the energy regime $N_\e \approx |\log \e|$ studied in \cite{glp}, where $S$ and $A$ live on the same scale $|\log \e|$. In \cite{glp} the authors derive a macroscopic energy that has the same structure as $\mathcal{F}$, but in which the contributions of elastic energy and plastic energy are coupled by the relation $\mu = \Curl \beta$, where $\beta = S+A$ represents the whole macroscopic strain.  

Once the $\Gamma$-limit $\mathcal{F}$ is obtained, we impose a piecewise constant Dirichlet boundary condition on $A$, and minimise $\mathcal{F}$ under such constraint. In Theorem \ref{prop:loc const} we prove that $\mathcal{F}$ admits piecewise constant minimisers, of the form
\[
\hat{A}= \sum_{i=1}^k  A_i \, \rchi_{\Om_i} \,,
\]
where the $A_i$'s are antisymmetric matrices and $\{ \Om_i\}$ is a Caccioppoli partition of $\Om$. We interpret $\hat{A}$ as a linearised polycrystal, with $\Om_i$ representing a single grain having  orientation $A_i$. This interpretation is motivated by the fact that antisymmetric matrices can be considered as infinitesimal rotations. The (linear) energy corresponding to $\hat{A}$ can be seen as a linearised version of the Read-Shockley formula for small angle tilt grain boundaries, i.e.,
\begin{equation} \label{pers shock}
E= E_0 \, \theta (1+|\log \theta|)\,,
\end{equation}
where $E_0 >0$ is a constant depending only on the material and $\theta $ is the angle formed by two grains. 
Indeed, the Read-Shockley formula is obtained in \cite{shockley} by computing the elastic energy for an evenly spaced array of $1/\e$ dislocations at the grain boundaries. Our energy regime accounts only for $N_\e \ll 1/\e$ dislocations, therefore we do not have enough dislocations to create true rotations between grains. Nevertheless we still observe polycrystalline structures, but the rotation angles between grains are infinitesimal.

Recently Lauteri and Luckhaus \cite{lauteri} proved some compactness properties and energy bounds in agreement  with the Read-Shockley formula.  It would be desirable to understand if our $\Gamma$-limit can be deduced from their model as the angle $\theta$ between grains tends to zero.  
Moreover, it would be interesting to push our $\Gamma$-convergence analysis to energy regimes of order $\frac{|\log\e|}{\e}$, corresponding to $N_\e \approx \frac{1}{\e}$. In this regime true  rotations should emerge and the Read-Shockley formula could be possibly derived by $\Gamma$-convergence. At present, our technical assumption on well separation between dislocations is not compatible with such an energy regime.

Another natural question is whether the minimiser $\hat{A}$ is unique, or at least if all the minimisers are piece-wise constant. 
We suspect that, by enforcing piece-wise constant boundary conditions, generically all minimisers are piece-wise constant.

A further problem is to deduce our $\Gamma$-limit $\mathcal{F}$ by starting from a nonlinear energy computed on small deformations $v=x + \e u$, in the energy regime $N_\e \gg |\log \e|$. A similar analysis was already performed in \cite{msz} (see also \cite{sz, ginster}), where the authors derive the $\Gamma$-limit obtained in \cite{glp} starting from a nonlinear energy, under the assumption that $N_\e \approx |\log \e|$. It seems possible to adapt the techniques used in \cite{msz} to our case. This problem is currently under investigation by the authors.

Finally, a further step forward in our analysis is the following: in this paper the formation of polycrystalline structures is driven by relaxed boundary conditions, as usual for minimisation problems in $BV$ spacess. It would be interesting to deal with true boundary conditions, which  we expect to lead to the same $\Gamma$-limit $F^{g_A}$ defined in \eqref{gbd}. Moreover,  it would be interesting to replace boundary conditions by forcing terms. For instance, bulk forces in competition with  surface energies at grain boundaries should result in polycrystals exhibiting some intrinsic length scale. This is the case of semi-coherent interfaces, separated by periodic nets of dislocations (see \cite{fpp}).


\bibliography{bibliografia}

\begin{thebibliography}{10}

\bibitem{afp}
L.~Ambrosio, N.~Fusco, and D.~Pallara.
\newblock {\em {F}unctions of {B}ounded {V}ariation and {F}ree {D}iscontinuity
  {P}roblems}.
\newblock Oxford Science Publications, 2000.

\bibitem{bacon}
D.~J. Bacon, D.~M. Barnett, and R.~O. Scattergood.
\newblock Anisotropic continuum theory of lattice defects.
\newblock {\em Progress in Materials Science}, 23(2-4):51--262, 1978.

\bibitem{cermelli}
P.~Cermelli and G.~Leoni.
\newblock Renormalized energy and forces on dislocations.
\newblock {\em SIAM Journal on Mathematical Analysis}, 37(4):1131--1160, 2005.

\bibitem{ciarlet}
P.~G. Ciarlet.
\newblock {\em Three dimensional elasticity}.
\newblock North Holland, Amsterdam, 1988.

\bibitem{dautray}
R.~Dautray and J.-L. Lions.
\newblock {\em {M}athematical {A}nalysis and {N}umerical {M}ethods for
  {S}cience and {T}echnology. Vol. 3}.
\newblock Springer, Berlin, 1988.

\bibitem{DGP}
L.~De~Luca, A.~Garroni, and M.~Ponsiglione.
\newblock {$\Gamma$}-convergence analysis of systems of edge dislocations: the
  self energy regime.
\newblock {\em Arch. Ration. Mech. Anal.}, 206(3):885--910, 2012.

\bibitem{deshpande}
V.~S. Deshpande, A.~Needleman, and E.~Van~der Giessen.
\newblock Finite strain discrete dislocation plasticity.
\newblock {\em Journal of the Mechanics and Physics of Solids}, 51:2057 --
  2083, 2003.

\bibitem{fpp}
S.~Fanzon, M.~Palombaro, and M.~Ponsiglione.
\newblock A {V}ariational {M}odel for {D}islocations at {S}emi-coherent
  {I}nterfaces.
\newblock {\em Journal of Nonlinear Science}, Feb 2017.
\newblock DOI: \url{https://doi.org/10.1007/s00332-017-9366-5}.

\bibitem{imagegrain}
H.~Foell.
\newblock Defects in {C}rystals, {H}ypertext.
\newblock University of Kiel. URL:
  \url{https://www.tf.uni-kiel.de/matwis/amat/def_en/}.

\bibitem{glp}
A.~Garroni, G.~Leoni, and M.~Ponsiglione.
\newblock Gradient theory for plasticity via homogenization of discrete
  dislocations.
\newblock {\em J. Eur. Math. Soc. (JEMS)}, 12(5):1231--1266, 2010.
\newblock DOI: \url{10.4171/JEMS/228}.

\bibitem{ginster}
J.~Ginster.
\newblock {P}lasticity as the {$\Gamma$}-{L}imit of a {N}onlinear {D}islocation
  {E}nergy with {M}ixed {G}rowth.
\newblock {\em PAMM Proc. Appl. Math. Mech.}, 15:543--544, 2015.
\newblock DOI: \url{https://doi.org/10.1002/pamm.201510262}.

\bibitem{gottstein}
G.~Gottstein.
\newblock {\em Physical foundations of materials science}.
\newblock Springer, 2013.

\bibitem{jerrard}
R.L. Jerrard and H.M. Soner.
\newblock Limiting behavior of the ginzburg-landau functional.
\newblock {\em Funct. Anal. 192}, 192(2):524 -- 561, 1992.

\bibitem{lauteri}
G.~Lauteri and S.~Luckhaus.
\newblock An {E}nergy {E}stimate for {D}islocation {C}onfigurations and the
  {E}mergence of {C}osserat-{T}ype {S}tructures in {M}etal {P}lasticity.
\newblock {\em Preprint}, 2017.
\newblock URL: \url{https://arxiv.org/abs/1608.06155}.

\bibitem{msz}
S.~Muller, L.~Scardia, and C.~Zeppieri.
\newblock Geometric rigidity for incompatible fields and an application to
  strain-gradient plasticity.
\newblock {\em Indiana University Mathematics Journal}, 63(5):1365--1396, 2014.

\bibitem{read}
W.~T. Read.
\newblock {\em Dislocations in {C}rystals}.
\newblock McGraw-Hill, 1953.

\bibitem{shockley}
W.~T. Read and W.~Shockley.
\newblock Dislocation models of crystal grain boundaries.
\newblock {\em Phys. Rev.}, 78:275--289, May 1950.

\bibitem{sz}
L.~Scardia and C.~Zeppieri.
\newblock Line-{T}ension {M}odel for {P}lasticity as the {$\Gamma$}-{L}imit of
  a {N}onlinear {D}islocation {E}nergy.
\newblock {\em SIAM Journal on Mathematical Analysis}, 44(4):2372--2400, 2012.

\bibitem{Taylor}
J.~E. Taylor and W.~Cahn.
\newblock A unified approach to motion of grain boundaries, relative tangential
  translation along grain boundaries, and grain rotation.
\newblock {\em Acta Materialia}, 52(16):4887--4898, 2004.

\bibitem{imagepoli}
Webpage.
\newblock Atomic {S}cale {S}tructure of {M}aterials.
\newblock Do{I}{T}{P}o{M}{S} {T}{L}{P}, {U}niversity of {C}ambridge. URL:
  \url{https://www.doitpoms.ac.uk/tlplib/atomic-scale-structure/poly.php}.

\end{thebibliography}

\bibliographystyle{plain}

\end{document}